\theoremstyle{plain}
\newtheorem{theorem}{Theorem}[section]
\newtheorem{defin}[theorem]{Definition}
\newtheorem{prop}[theorem]{Proposition}
\newtheorem{cor}[theorem]{Corollary}
\newtheorem{lemma}[theorem]{Lemma}
\newtheorem{remark}[theorem]{Remark}
\newcommand\numberthis{\addtocounter{equation}{1}\tag{\theequation}}
\newcommand{\HH}{\mathsf{H}}
\newcommand{\vt}{v}
\newcommand{\FF}{\mathcal{F}}
\newcommand{\p}{\overset{p}{\to}}
\renewcommand{\b}{\mathsf{b}}
\begin{document}
\numberwithin{equation}{section}
\title{The stochastic telegraph equation limit of the stochastic higher spin six vertex model}
\author{Yier Lin}
\address{Y.\ Lin,
	Department of Mathematics, Columbia University,
	2990 Broadway, New York, NY 10027}
\email{yl3609@columbia.edu}
\begin{abstract}
In this paper, we
prove that the stochastic telegraph equation arises as a scaling limit of the stochastic higher spin six vertex (SHS6V)
model with general spin
$I/2, J/2$. This extends results of Borodin and Gorin which focused on the $I=J=1$ six vertex case  and demonstrates the universality of the stochastic telegraph equation in this context. We also provide a functional extension of the central limit theorem obtained
in \cite[Theorem 6.1]{BG19}. The main idea is to generalize the four point relation established in \cite[Theorem 3.1]{BG19}, using fusion.
\end{abstract}
\maketitle
\section{Introduction}

\subsection{Telegraph equation and stochastic telegraph equation}
The \emph{telegraph equation} is a hyperbolic PDE given by
\begin{equation}\label{eq:telegraph}
\begin{cases}
u_{XY} (X, Y)  + \beta_1 u_Y (X, Y) + \beta_2 u_X (X, Y) = f(X, Y),\\
u(X, 0) = \chi(X), \quad u(0, Y) = \psi(Y),
\end{cases}
\end{equation}
where the functions $\chi, \psi \in C^1$ satisfy $\chi(0) = \psi(0)$. When $f$ is a deterministic function, the equation \eqref{eq:telegraph} is a classical object, see \cite[Chapter V]{CH08}. The stochastic versions of the telegraph equation were intensively studied in the last 50 years, we refer the reader to \cite[Section 1.1]{BG19} for a brief review. The solution theory of the telegraph equation goes back to \cite{CH08}, we present it in the way of \cite[Section 4]{BG19}. In fact, \eqref{eq:telegraph} admits a unique solution which reads  
\begin{align*}
u(X, Y) &= \psi(0) \mathcal{R}(X, Y, 0, 0) + \int_0^{Y} \mathcal{R}(X, Y; 0, y)\big(\psi'(y) + \beta_2 \psi(y)\big) dy + \int_0^X \mathcal{R}(X, Y; x, 0)\big(\chi'(x) + \beta_1 \chi(x)\big) dx\\
\numberthis \label{eq:solutiontelegraph}
&\quad + \int_{0}^{X} \int_0^{Y} \mathcal{R}(X, Y, x,y) f(x, y) dx dy. 
\end{align*}
Here, $\mathcal{R}(X, Y, x, y)$ is the \emph{Riemann function} defined as 
\begin{equation}\label{eq:riemannfunction}
\mathcal{R}(X, Y; x, y) = \frac{1}{2\pi \mathbf{i}} \oint_{-\beta_1} \frac{\beta_2 - \beta_1}{(z+\beta_1) (z+\beta_2)} \exp\bigg[(\beta_1 - \beta_2)\Big(-(X-x) \frac{z}{z + \beta_2} + (Y-y) \frac{z}{z + \beta_1}\Big)\bigg] dz,
\end{equation}
where the contour of the complex integration is a small circle in positive direction which only includes the pole at $-\beta_1$.
When $f$ is given by $f(X, Y) = \sqrt{\theta(X, Y)} \eta(X, Y)$, where $\eta$ is the space-time white noise with dirac delta correlation function and $\theta$ is a deterministic integrable function. By formula \eqref{eq:solutiontelegraph},  the solution to the \emph{stochastic telegraph equation} is a Gaussian field with covariance function 
\begin{align}\label{eq:covariance}
\text{Cov}\big(u(X_1, Y_1), u(X_2, Y_2)\big)  = \int_0^{X_1 \wedge Y_1} \int_0^{X_2 \wedge Y_2} \mathcal{R}(X_1, Y_1, x, y) \mathcal{R}(X_2, Y_2, x, y) \theta(x, y) dx dy.
 \end{align}
\cite[Section 4]{BG19} identifies the following discretization of the telegraph equation 
\begin{equation}\label{eq:discretetelegraph}
\begin{cases}
&\Phi(X+1, Y+1) - b_1 \Phi(X, Y+1) - b_2 \Phi(X+1, Y) + (b_1 + b_2 - 1) \Phi(X, Y) = g(X+1, Y+1),\\
&\Phi(X, 0) = \chi(X), \quad \Phi(0, Y) = \psi(Y),
\end{cases}
\end{equation}
where $\chi(0) = \psi(0)$. The unique solution to \eqref{eq:discretetelegraph} is given by \cite[Theorem 4.7]{BG19}:
\begin{align}
\notag
\Phi(X, Y) &= \psi(0) \mathcal{R}^d (X, Y; 0, 0) + \sum_{y=1}^Y \mathcal{R}^d (X, Y; 0, y) \big(\psi(y) - b_2 \psi(y-1)\big) + \sum_{x = 1}^X \mathcal{R}^d (X, Y; x, 0)\big(\chi(x) - b_1 \chi(x-1)\big)\\ 
\label{eq:temp29}
&\quad + \sum_{x= 1}^X \sum_{y=1}^Y \mathcal{R}^d (X, Y; x, y) g(x, y). 
\end{align}
where the discrete Riemann function $\mathcal{R}^d$ equals (see \cite[Eq. 45]{BG19}) 
\begin{align*}
\mathcal{R}^d (X, Y; x, y) &= \frac{1}{2 \pi \mathsf{i}} \oint_{-\frac{1}{b_2 (1-b_1)}} \frac{(b_2 - b_1) dz}{(1 + b_2 (1-b_1) z) (1+b_1 (1-b_2) z)}\\
\numberthis \label{eq:discreteRiemann}
&\quad \times \Big(\frac{1 + b_1 (1-b_1) z}{1 + b_2 (1-b_1) z}\Big)^{X-x} \Big(\frac{1 + b_2 (1-b_2) z}{1 + b_1 (1-b_2) z}\Big)^{Y-y}.
\end{align*}
Here, the contour is a small circle going in positive direction which only encircles the pole at $-\frac{1}{b_2 (1-b_1)}$. 
\bigskip
\\
In the first version of the arxiv paper \cite{BG18}, Borodin and Gorin showed that under a special scaling regime where the weight of the corner type vertex goes to zero, the height function of the stochastic six vertex model converges to the telegraph equation. They also conjectured that the fluctuation field will converge to the stochastic telegraph equation with some heuristic arguments and proved this result under a special situation called \emph{low density boundary regime}. The result for general boundary condition was later proved in \cite{ST19} and \cite{BG19} via two distinct approaches. This result comes as a surprise. Since from \cite{GS92, BCG16} we know that the stochastic six vertex model belongs to the KPZ universality class. The one point fluctuation of the models in this universality is governed by Tracy Widom distribution \cite{TW94}. However, the solution to the stochastic telegraph equation does not lie in this universality (since it is a Gaussian field). In addition,\cite{CGST20} shows that under weakly asymmetric scaling (which is a different scaling compared with the one in \cite{BG19}), the stochastic six vertex model converges to  the KPZ equation \cite{KPZ86, Cor12}, which is a parabolic stochastic PDE while the stochastic telegraph equation is hyperbolic!
\bigskip
\\
It is natural to ask if the stochastic telegraph equation also arises as a scaling limit of other probabilistic models. In this paper, we show that the stochastic higher spin six vertex (SHS6V) model, which is a higher spin generalization of the stochastic six vertex model, converges to the stochastic telegraph equation under certain scaling regime. This extends the universality of the stochastic telegraph equation. In addition, \cite{Lin20} showed that under a different scaling than the one considered in this paper, the SHS6V model converges to the KPZ equation. This tells us that the SHS6V model converges to two distinct types of stochastic PDE under various choice of scaling.  
\subsection{The SHS6V model}
The SHS6V model  is a four-parameter family of quantum integrable system first introduced in \cite{CP16} and has been intensely studied in recent years, from the perspective of symmetric polynomial \cite{Bor17, Bor18}, exact solvability \cite{BCPS15, CP16, BP18}, Markov duality \cite{CP16, Kuan18, Lin19} and scaling limit \cite{CT17, IMS20, Lin20}. In particular, it is a higher spin generalization of stochastic six vertex model from spin parameter $I = J = 1$ to general $I, J \in \mathbb{Z}_{\geq 1}$. In this paper, we discover a scaling regime for the SHS6V model (which degenerates to the scaling in \cite{BG19} when $I = J = 1$), under which we prove that: 1) the hydrodynamic limit of the SHS6V model is a telegraph equation; 2) the fluctuation field of the model converges to a stochastic telegraph equation. To explain our result with more detail, we start with a brief review of the SHS6V model.
\begin{defin}[$J = 1$ $\mathbb{L}$-matrix] \label{def:j1lmatrix}
We define the $J = 1$ $\mathbb{L}$-matrix to be a matrix with row and column indexed by $\mathbb{Z}_{\geq 0} \times \{0, 1\}$. The element of the $J = 1$ $\mathbb{L}$-matrix is specified by 
\begin{align*}
L^{(1)}_{\alpha}(m, 0; m, 0) &= \frac{1 + \alpha q^m}{1 + \alpha}, \quad L^{(1)}_{\alpha}(m, 0; m-1, 1) = \frac{\alpha (1 - q^m)}{1 + \alpha},\\
L^{(1)}_{\alpha}(m, 1; m, 1) &= \frac{\alpha + \nu q^m}{1 + \alpha}, \quad L^{(1)}_{\alpha}(m, 1; m+1, 0) = \frac{1 - \nu q^m}{1 + \alpha}
\end{align*} 
and $L^{(1)}_\alpha (i_1, j_1; i_2, j_2) = 0$ for all other values of $(i_1, j_1)$, $(i_2, j_2) \in \mathbb{Z}_{\geq 0} \times \{0, 1\}$. As a convention, throughout the paper, we set $\nu = q^{-I}$ for some fixed $I \in \mathbb{Z}_{\geq 1}$. Note that $L_{\alpha}^{(1)} (I, 1; I+1, 0) = 0$, hence the $J = 1$ $\mathbb{L}$-matrix transfers the subspace $\{0,1, \dots, I\} \times \{0, 1\}$ to itself and we will restrict ourselves on this subspace.
\end{defin}
We call $\alpha$ the \emph{spectral parameter} and in the notation of $L_{\alpha}^{(1)}$, where the dependence on other parameters is not made explicit.
It is clear from the definition that for fixed $i_1 \in \{0,1, \dots, I\}$ and $j_1 \in \{0, 1\}$, 
\begin{equation*}
\sum_{(i_2, j_2) \in \{0, 1, \dots, I\} \times \{0, 1\}} L_{\alpha}^{(1)}(i_1, j_1; i_2, j_2) = 1.
\end{equation*}
Moreover, $L_{\alpha}^{(1)}$ is stochastic if we impose the following condition. 
\begin{lemma}\label{lem:stochastic1}
$L_{\alpha}^{(1)}$ is stochastic if one of the following holds:
\begin{itemize}
	\item $q \in (0, 1)$ and $\alpha < - q^{-I}$,
	\item $q > 1$ and $-q^{-I} < \alpha < 0$. 
\end{itemize}
\end{lemma}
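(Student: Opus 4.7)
The plan is to leverage the row-sum identity stated immediately before the lemma: every row of $L^{(1)}_\alpha$ sums to $1$, so stochasticity is equivalent to showing that each of the four nonzero entries of $L^{(1)}_\alpha$ is nonnegative for every $m \in \{0, 1, \ldots, I\}$. I would therefore proceed by a direct sign analysis, treating the two parameter regimes separately.

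In the first case, suppose $q \in (0,1)$ and $\alpha < -q^{-I}$. Since $q \in (0,1)$ gives $q^{-I} \geq 1$, one has $\alpha < -1$, hence $1 + \alpha < 0$, so a nonnegative entry requires a nonpositive numerator. Using $0 < q^m \leq 1$ and $\nu q^m = q^{m-I} \geq 1$ for $m \leq I$, together with $\alpha < -q^{-I} = -\nu$, one checks that $1 + \alpha q^m \leq 1 - q^{m-I} \leq 0$, that $\alpha(1 - q^m) \leq 0$ (product of opposite-sign factors), that $\alpha + \nu q^m < -\nu + \nu = 0$ (since $\nu q^m \leq \nu$), and that $1 - \nu q^m = 1 - q^{m-I} \leq 0$. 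The second case is symmetric: for $q > 1$ and $-q^{-I} < \alpha < 0$, the inequalities $q^{-I} < 1$, $q^m \geq 1$, and $\nu q^m = q^{m-I} \leq 1$ all reverse, so $1 + \alpha > 0$ and the same four numerators become nonnegative by parallel estimates (in particular $\alpha + \nu q^m > -\nu + \nu = 0$).

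The argument is entirely elementary, and I do not expect a substantive obstacle; the only real concern is bookkeeping, namely keeping track of the sign of $1+\alpha$ and of the directions of the inequalities $q^m \gtrless 1$ and $\nu q^m \gtrless 1$, both of which reverse between the two regimes. The strict inequalities on $\alpha$ are precisely what is needed to avoid the boundary cases $\alpha = -q^{-I}$ and $\alpha = 0$, at which one of the four numerators or the common denominator degenerates.
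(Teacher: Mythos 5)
Your proof is correct: since the entries in each row already sum to $1$, stochasticity reduces to the four sign checks you carry out, and your inequalities in both regimes are sound (in particular noting that $1+\alpha$ flips sign, so you need the numerators nonpositive in the first regime and nonnegative in the second). The paper does not write out this verification — it simply cites \cite[Proposition 2.3]{CP16} and remarks that the statement ``can also be verified directly'' — so you have essentially supplied the elementary argument the paper alludes to but leaves to the reader.
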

\begin{proof}
This follows from \cite[Proposition 2.3]{CP16}, which can also be verified directly.
\end{proof}
For an entry $L^{(1)}_\alpha (i_1, j_1; i_2, j_2)$, we interpret the four tuple $(i_1, j_1, i_2, j_2)$ as a vertex configuration in the sense that a vertex is associated with $i_1$ input lines and $j_1$ input lines coming from bottom and left, $i_2$ output lines and $j_2$ output lines flowing to above and right, see Figure \ref{fig:vertex visualize}. The quantity $L_{\alpha}^{(1)}(i_1, i_2; j_1, j_2)$ gives the weight of the vertex configuration. Note that for a vertex associated with $L_{\alpha}^{(1)}$, we allow up to $I$  number of vertical lines and up to one horizontal line. We say that the $\mathbb{L}$-matrix is conservative in lines as it assigns zero weight to the entry $L_{\alpha}^{(1)}(i_1, j_1; i_2, j_2)$ unless $i_1 + j_1 = i_2 + j_2$. 
\begin{figure}[ht]
\centering
\includegraphics[scale = 0.8]{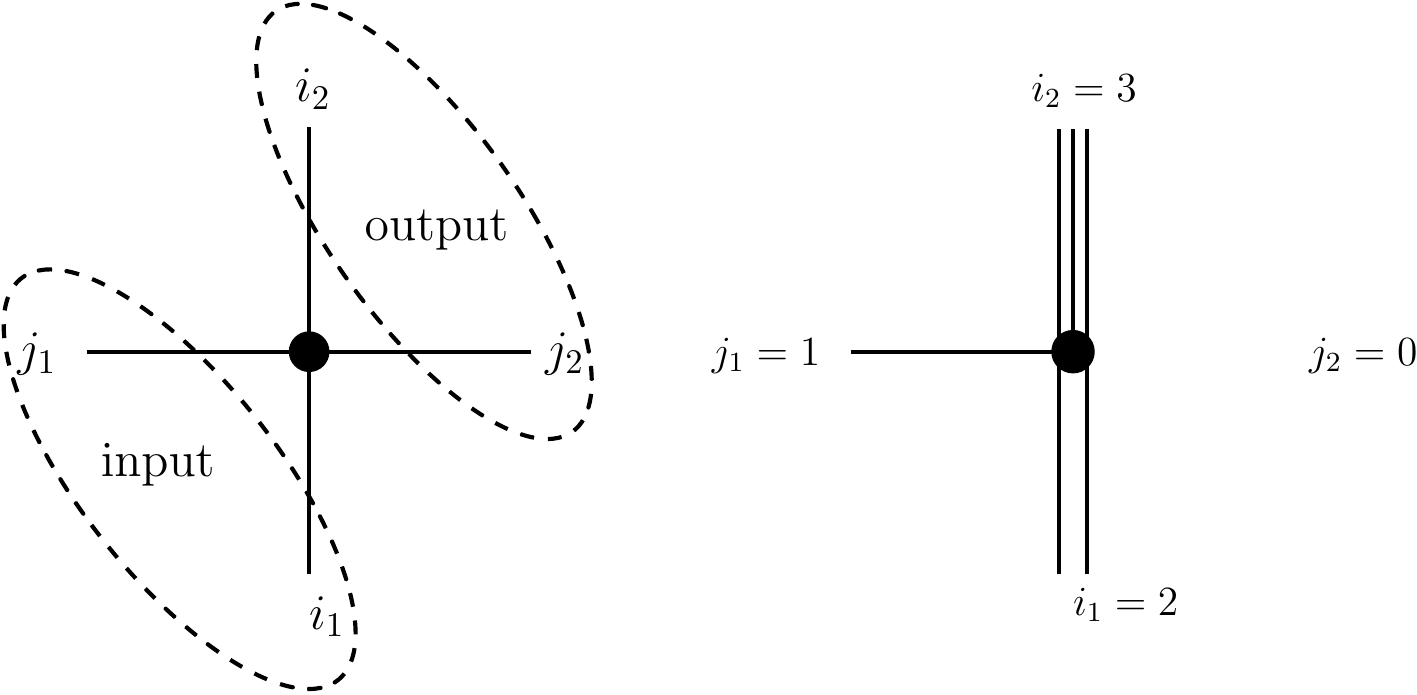}
\caption{Left panel: The vertex configuration labeled by four tuple $(i_1, j_1; i_2, j_2)$ (from bottom and then in the clockwise order) has weight $L_{\alpha}^{(1)}(i_1, j_1; i_2, j_2)$, which absorbs $i_1 \in \{0, 1, \dots, I\}$ input lines from bottom, $j_1 \in \{0, 1\}$ input line from left, and produces $i_2 \in \{0, 1, \dots, I\}$ output lines to above, $j_2 \in \{0, 1\}$ output lines to right. Right panel: Visualization of the vertex configuration $(i_1, j_1; i_2, j_2) = (2, 1; 3, 0)$ in terms of lines.}
\label{fig:vertex visualize}
\end{figure}
\bigskip
\\
We want to relax the restriction that the multiplicities of the horizontal line are bounded by $1$, and instead, consider multiplicities bounded by any fixed $J$. This motivates us to define the $L^{(J)}_\alpha$ matrix, the construction of it follows the so-called fusion procedure, which was invented in a representation-theoretic context \cite{KRS81, KR87}
to produce higher-dimensional solutions of the Yang–Baxter equation from lower-dimensional
ones. The explicit expression of general $J$ $\mathbb{L}$-matrix is derived separately in \cite{Man14} and \cite{CP16}:
\begin{equation}\label{eq:higherspinLmatrix}
\begin{split}
L_\alpha^{(J)}(i_1, j_1; i_2, j_2) = &\mathbf{1}_{\{i_1 + j_1 = i_2 + j_2\}} q^{\frac{2j_1 - j_1^2}{4} - \frac{2j_2 - j_2^2}{4} + \frac{i_2^2 + i_1^2}{4} + \frac{i_2 (j_2 - 1) + i_1 j_1}{2}}\\
&\times \frac{\nu^{j_1 - i_2} \alpha^{j_2 - j_1 + i_2} (-\alpha\nu^{-1}; q)_{j_2 - i_1}}{(q;q)_{i_2} (-\alpha; q)_{i_2 + j_2} (q^{J+1 - j_1 }; q)_{j_1 - j_2}}  { }_{4} \bar{\phi}_3 \bigg(\begin{matrix}
q^{-i_2};q^{-i_1}, -\alpha q^J, -q\nu \alpha^{-1}\\ \nu, q^{1 + j_2 - i_1}, q^{J + 1 - i_2 - j_2} 
\end{matrix} \bigg| q, q\bigg).
\end{split}
\end{equation}
Here, ${}_4 \bar{\phi}_3$ is the regularized terminating basic hyper-geometric series defined by 
\begin{align*}
_{r+1} \bar{\phi}_r \bigg(\begin{matrix}
q^{-n}, a_1, \dots, a_r\\ b, \dots, b_r 
\end{matrix}
\bigg|q, z \bigg) &= \sum_{k=0}^n z^k \frac{(q^{-n}; q)_k}{(q; q)_k} \prod_{i=1}^r (a_i; q)_k (b_i q^k; q)_{n-k}.
\end{align*} 
It is a simple exercise to see when $J = 1$, the expression of $L_{\alpha}^{(J)}$ matches with $L_{\alpha}^{(1)}$ in Definition \ref{def:j1lmatrix}. We will show momentarily that $L_{\alpha}^{(J)}$ is stochastic (Corollary \ref{cor:stochasticj}). This allows us to view the matrix element $L^{(J)}_\alpha (i_1, j_1; i_2, j_2)$ as a vertex configuration in the manner that we described in $J =1$ case. Note that now we allow up to $J$ lines in the horizontal direction.
\bigskip
\\
Despite explicitness, the expression of the $\mathbb{L}$-matrix above is too complicated to manipulate. For instance, using \eqref{eq:higherspinLmatrix} directly, it might be hard to demonstrate the stochasticity of $L_{\alpha}^{(J)}$. To this end, we recall a probabilistic derivation of $L_{\alpha}^{(J)}$ in \cite{CP16} using the idea of fusion, which goes back to \cite{KR87}. We start by introducing a few notations.
\bigskip
\\
Define the stochastic matrix $\Xi$ with rows and columns indexed by  $\{0, 1\}^{\otimes J}$ and $\{0, 1, \dots, J\}$ such that 
\begin{equation*}
\Xi\big((h_1, \dots, h_J), h\big)  = 
\begin{cases}
1 \qquad \text{if } h = \sum_{i=1}^J h_i
\\
0 \qquad \text{else}
\end{cases}
\end{equation*}
and the stochastic matrix $\Lambda$ with row and column indexed by $\{0, 1, \dots, J\}$ and $\{0, 1\}^{\otimes J}$. The matrix element is given by 
\begin{equation*}
\Lambda\big(h, (h_1, \dots, h_J)\big) = 
\begin{cases}
\displaystyle \frac{1}{Z_J(h)}\prod_{i : h_i = 1} q^{i-1} \qquad &\text{if } h = \sum_{i = 1}^J h_i  \\
0 & \text{else}
\end{cases}
\end{equation*}
where $Z_J(h) = q^{h(h-1)/2} \frac{(q, q)_J}{(q, q)_h (q, q)_{J-h}}$ is the normalizing constant (it can be computed using $q$-binomial theorem).
\bigskip
\\
We also define the matrix $L_{\alpha}^{\otimes_q J}$ with rows and columns indexed by $\{0, 1, \dots, I\} \times \{0, 1\}^{\otimes J}$ with matrix elements
$$L_{\alpha}^{\otimes_q J} (v, h_1, \dots, h_J; v', h'_1, \dots, h'_J) = \sum_{\substack{v_0, v_1, \dots, v_J \\ v_0 = v, v_J = v'}} \prod_{i = 1}^J L^{(1)}_{\alpha q^{i-1}} (v_{i-1}, h_i; v_{i}, h'_i).$$
In terms of the right part of Figure \ref{fig:vertexdecompose}, these matrix elements provide the transition probabilities from the lines coming into a column from bottom and left, to those leaving to the top and right. 
\bigskip
\\
The following lemma allows us to decompose the vertex with horizontal spin $J/2$ (i.e. the vertex associated with $L_{\alpha}^{(J)}$) in terms of a sequence of horizontal spin $1/2$ vertices, see Figure \ref{fig:vertexdecompose} for visualization.
\begin{lemma}\label{lem:fusion}
The following identity holds
\begin{align*}
L_{\alpha}^{(J)}(v, h; v', h') = \sum_{\substack{(h_1, \dots, h_J) \in \{0, 1\}^J\\ (h'_1, \dots, h'_J) \in \{0, 1\}^J}} \Lambda\big(h; (h_1, h_2, \dots h_J)\big) L_{\alpha}^{\otimes_q J} (v, h_1, \dots, h_J; v', h'_1, \dots, h'_J)\, \Xi\big((h_1', \dots, h_J'); h'\big).
\end{align*}
\end{lemma}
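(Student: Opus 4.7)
My plan is to derive this identity as a direct instance of the fusion construction that builds higher-spin $\mathbb{L}$-matrices from their spin-$1/2$ counterparts, going back to \cite{KR87} and carried out in probabilistic form in \cite[Section 3]{CP16}. The right-hand side of the claimed identity encodes a three-step procedure: given an unordered input horizontal spin $h$, the matrix $\Lambda$ samples a $q$-weighted ordering $(h_1,\dots,h_J)\in\{0,1\}^J$ with prescribed sum; the matrix $L_\alpha^{\otimes_q J}$ applies $J$ successive spin-$1/2$ vertices at the geometric spectral parameters $\alpha,\alpha q,\dots,\alpha q^{J-1}$; and $\Xi$ collapses the resulting ordered outputs to their sum $h'$. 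The lemma asserts that this composite procedure produces exactly the matrix $L_\alpha^{(J)}$ given in \eqref{eq:higherspinLmatrix}.

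The argument would proceed in two main steps. Step 1 is to show that the averaged matrix $\Lambda\, L_\alpha^{\otimes_q J}\, \Xi$ depends only on the aggregate horizontal inputs and outputs $h,h'$, and not on any residual ordering of them. This is the crux of fusion: the Yang--Baxter equation for $L_\alpha^{(1)}$ provides an intertwiner between two consecutive spin-$1/2$ rows at spectral parameters $\alpha q^{i-1}$ and $\alpha q^i$, and the specific $q^{i-1}$ weights inside $\Lambda$, together with the normalization $Z_J(h)$, are chosen precisely so that the distribution over orderings is invariant under such intertwinings. Step 2 is to identify the resulting symmetric expression with the explicit hypergeometric formula \eqref{eq:higherspinLmatrix}; this is a direct calculation that, after applying the $q$-binomial theorem to collapse the inner sums produced by $L_\alpha^{\otimes_q J}$, reduces to a terminating ${}_4\bar\phi_3$ series matching the right-hand side of \eqref{eq:higherspinLmatrix}.

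The main obstacle is Step 1, since the symmetrization argument requires careful bookkeeping of the $q$-weights along each ordering. The cleanest route is to simply invoke \cite[Theorem 3.15]{CP16} (or the equivalent statement in \cite{Man14}), which establishes exactly this identity in the representation-theoretic language; the conversion to the probabilistic notation used here is mechanical. As an alternative, one can induct on $J$: the base case $J=1$ is immediate since $\Lambda$ and $\Xi$ reduce to identity matrices; the inductive step peels off the last horizontal line (acting via $L_{\alpha q^{J-1}}^{(1)}$) and uses a fusion-type recursion expressing $L_\alpha^{(J)}$ as the symmetrization of $L_\alpha^{(J-1)}$ stacked on top of $L_{\alpha q^{J-1}}^{(1)}$, which itself follows from the $J=2$ instance of the present identity. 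Either approach yields the result, and the lemma should be read as a restatement of the classical fusion procedure in the Markov-matrix notation used throughout the paper.
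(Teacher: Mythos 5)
Your proposal is correct and ultimately lands on the same approach the paper takes: the paper's proof of Lemma \ref{lem:fusion} is a one-line citation to \cite[Theorem 3.15]{CP16}, which is exactly what you identify as the cleanest route. The additional material you sketch (the Yang--Baxter symmetrization in Step 1, the hypergeometric identification in Step 2, and the alternative induction on $J$) is a reasonable account of what underlies the cited theorem, but none of it is needed here and the paper does not reproduce it.
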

\begin{proof}
This was shown in \cite[Theorem 3.15]{CP16}.
\end{proof}
\begin{figure}[ht]
\centering 
\includegraphics[scale = 1.3]{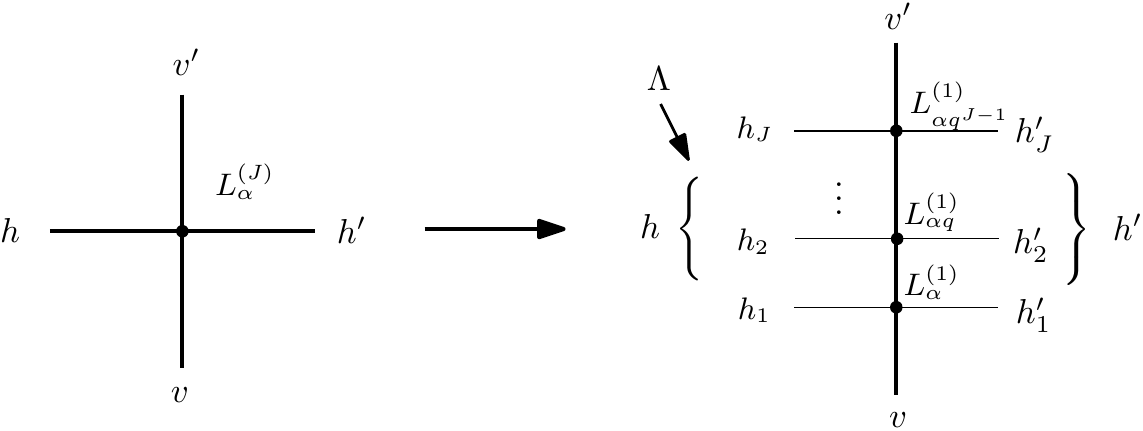}
\caption{Pictorial representation of the identity in Lemma \ref{lem:fusion}. Fixing $h, v, h', v'$, the weight of vertex configuration on the left is given by $L_{\alpha}^{(J)}(\vt, h; \vt', h')$. It is equal to the weight of the column on the right, which is the summation of all $L_{\alpha}^{\otimes_q J} (\vt, h_1, \dots, h_J; \vt', h'_1, \dots, h'_J)$, under the condition $h_1 + \dots + h_J = h$ and $h'_1 + \dots + h_J' = h'$, each term in the summation is reweighted by multiplying $\Lambda\big(h; (h_1, \dots, h_J)\big)$.}	
\label{fig:vertexdecompose}
\end{figure}
Applying Lemma \ref{lem:fusion}, we show that $L_{\alpha}^{(J)}$ is stochastic, under the following choice of parameters.
\begin{cor}\label{cor:stochasticj}
The matrix $L_{\alpha}^{(J)}$ is stochastic if either of the following condition holds
\begin{itemize}
\item $q \in [0, 1)$ and $\alpha < -q^{-I-J+1}$,
\item $q > 1$ and $-q^{-I-J+1} < \alpha < 0$.
\end{itemize}
\end{cor}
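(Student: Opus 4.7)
The plan is to deduce stochasticity from the fusion decomposition in Lemma \ref{lem:fusion}, by showing that under either of the two parameter regimes, each of the three matrices on the right hand side is stochastic.

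First I would note that, by definition, $\Lambda$ and $\Xi$ are stochastic: their entries are manifestly nonnegative and sum to $1$ along each row (for $\Lambda$ this is precisely the content of $Z_J(h)$ being the correct normalizing constant, computed via the $q$-binomial theorem; for $\Xi$ it is immediate). Next, I would observe that $L_\alpha^{\otimes_q J}$ is the matrix encoding $J$ consecutive applications of the $J=1$ $\mathbb{L}$-matrices with spectral parameters $\alpha, \alpha q, \ldots, \alpha q^{J-1}$, in the sense that
\begin{equation*}
L_\alpha^{\otimes_q J}\bigl(v, h_1, \ldots, h_J; v', h'_1, \ldots, h'_J\bigr) = \sum_{v_0 = v,\, v_J = v'} \prod_{i=1}^J L_{\alpha q^{i-1}}^{(1)}(v_{i-1}, h_i; v_i, h'_i).
\end{equation*}
Thus, if each $L_{\alpha q^{i-1}}^{(1)}$ is itself stochastic as a map on $\{0,\ldots,I\}\times\{0,1\}$, then $L_\alpha^{\otimes_q J}$ is a composition of stochastic transitions (the internal variables $v_1, \ldots, v_{J-1}$ are summed out), hence stochastic on $\{0,\ldots,I\}\times\{0,1\}^{\otimes J}$.

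The remaining task is to translate the parameter conditions in the corollary into the condition that each $L_{\alpha q^{i-1}}^{(1)}$ for $i=1,\ldots,J$ is stochastic, using Lemma \ref{lem:stochastic1}. In the case $q\in[0,1)$, stochasticity of $L_{\alpha q^{i-1}}^{(1)}$ requires $\alpha q^{i-1}<-q^{-I}$, i.e.\ $\alpha<-q^{-I-i+1}$; since $q^{-I-i+1}$ is largest at $i=J$, the binding constraint is $\alpha<-q^{-I-J+1}$. In the case $q>1$, we need $-q^{-I}<\alpha q^{i-1}<0$ for every $i$, i.e.\ $-q^{-I-i+1}<\alpha<0$; here $-q^{-I-i+1}$ is largest at $i=J$, so the binding constraint is $-q^{-I-J+1}<\alpha<0$. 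Both match exactly the hypotheses of the corollary.

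Putting these together, under either of the stated conditions the right hand side of the identity in Lemma \ref{lem:fusion} is the composition (as stochastic kernels) $\Lambda\circ L_\alpha^{\otimes_q J}\circ \Xi$ of three stochastic matrices, hence $L_\alpha^{(J)}$ is stochastic: nonnegativity is preserved by sums and products of nonnegative entries, and the row sums are $1$ because composing stochastic kernels preserves this property. I do not anticipate any real obstacle here; the only mild subtlety is tracking the correct direction of the inequality on $\alpha$ after multiplying by $q^{J-1}$ in the two regimes $q<1$ and $q>1$, which is exactly why the exponent $-I-J+1$ appears in place of $-I$.
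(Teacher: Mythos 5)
Your proposal is correct and follows essentially the same route as the paper: deduce stochasticity of $L_\alpha^{(J)}$ from Lemma \ref{lem:fusion} by noting that $\Lambda$, $\Xi$, and each factor $L^{(1)}_{\alpha q^{i}}$ (via Lemma \ref{lem:stochastic1}) are stochastic, and that compositions of stochastic kernels remain stochastic. You spell out the inequality bookkeeping on $\alpha$ a bit more explicitly than the paper does, but the argument is the same.
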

\begin{proof}
Note that under the range imposed on $q, \alpha$, referring to Lemma \ref{lem:stochastic1}, the matrix $L^{(1)}_{\alpha q^i}$ is stochastic for each $i  = 0, 1, \dots, J-1$. As the product of stochastic matrices is stochastic as well, the stochasticity of $L_{\alpha}^{(J)}$ follows directly from Lemma \ref{lem:fusion}.  
\end{proof}
We proceed to define the SHS6V model on the first quadrant $\mathbb{Z}_{\geq 0}^2$. For each vertex $(x, y) \in \mathbb{Z}_{\geq 0}^2$, we associate it with a four tuple $(v_{x, y}, h_{x, y}, v_{x, y+1}, h_{x+1, y}) \in \mathbb{Z}_{\geq 0}^4$ such that $v_{x, y}, h_{x, y}$ represent the number of lines entering into the vertex from bottom and left, $v_{x, y+1}, h_{x+1, y}$ denote the number of lines flowing from the vertex to above and right. Note that configurations chosen for two neighboring vertices need to be compatible in the sense that the lines keep flowing. For instance, $v_{x, y+1}$ also represents the number of vertical input lines flowing into $(x, y+1)$, $h_{x, y+1}$ equals the number of horizontal lines entering into $(x+1, y)$ (see the right part of Figure \ref{fig:path}).
\begin{defin}\label{def:SHS6V}
We define the SHS6V model to be a stochastic path ensemble on $\mathbb{Z}_{\geq 0}^2$. The boundary condition specified by $\{v_{x, 0}\}_{x \in \mathbb{Z}_{\geq 0}}$ and $\{h_{0, y}\}_{y \in \mathbb{Z}_{\geq 0}}$   such that $v_{x, 0} \in \{0, 1, \dots, I\}$, $h_{0, y} \in \{0, 1, \dots, J\}$. In other words, we have $h_{0, y}$ number of lines entering into the vertex $(0, y)$ from the left boundary and $v_{x, 0}$ number of  lines flowing into the vertex $(x, 0)$ from the bottom boundary. Sequentially taking $(x, y)$ to be $(0, 0) \to (1, 0) \to (0, 1) \to (2, 0) \to (2, 1) \dots$, for vertex at $(x, y)$, given $v_{x, y}, h_{x, y}$ as the number of vertical and horizontal input lines, we randomly choose the number of vertical and horizontal output lines  $(v_{x, y+1}, h_{x+1, y}) \in \{0, 1, \dots, I\} \times \{0, 1,\dots, J\}$  according to probability  $L_{\alpha}^{(J)}(v_{x, y}, h_{x, y};\, \cdot, \cdot)$. Proceeding with this sequential sampling, we get a collection of paths going to the up-right direction and we call this the SHS6V model.
\end{defin}
We associate a \emph{height function} $H: \mathbb{Z}_{\geq 0}^2 \to \mathbb{Z}$ to the path ensemble, where the paths play a role as the level lines of the height function (see Figure \ref{fig:path}). Define for any $x, y \in \mathbb{Z}_{\geq 0}$, 
\begin{equation*}
H(x, y) = \sum_{j = 1}^{y} h_{0, j-1} - \sum_{i = 1}^{x} v_{i-1, y}.
\end{equation*} 
Clearly, we have $H(0, 0) = 0$ and $H(x, y) - H(x-1, y) = -v_{x-1, y}$. Since the vertex is conservative, we also have
\begin{equation*}
H(x, y) - H(x, y-1) = h_{x, y-1}.
\end{equation*}
Graphically, when we move across $i$ number of vertical lines from left to right, the height function will decrease by $i$. When we  move across $j$ number of horizontal lines, the height function will increase by $j$. We further extend $H(x, y)$ to all $(x, y) \in \mathbb{R}_{\geq 0}^2$ by first linearly interpolating the height function first in the $x$-direction, then in the $y$-direction. It is obvious that the resulting function is Lipschitz and monotone.
\bigskip
\\
For later use, we call $I/2, J/2$ the vertical and horizontal spin respectively. If a vertex is of horizontal spin $1/2$, we call it a $J = 1$ vertex, otherwise we call it a general $J$ vertex. 
\begin{figure}[ht]
\centering
\includegraphics[scale = 1.1]{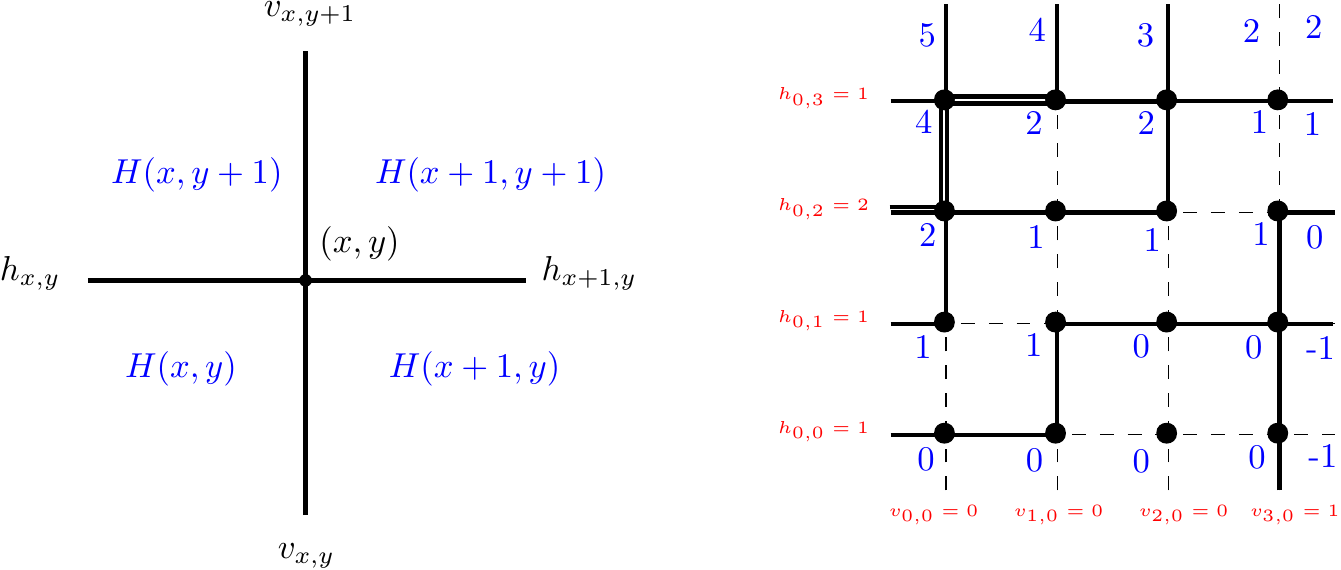}
\caption{Left: Illustration of the height function around a vertex $(x, y)$, note that  $H(x, y+1) = H(x, y) + h_{x, y}$, $H(x+1, y) = H(x, y) - v_{x, y}$ and $H(x+1, y+1) = H(x, y) + h_{x,y} - v_{x, y+1} = H(x, y) - v_{x,y} + h_{x+1, y}$. Right:
Sampled stochastic path ensemble on a quadrant. The red number indicates the number lines entering into the boundary, the blue number represents the height at each vertex.}
\label{fig:path}
\end{figure}
\subsection{Four point relation}\label{subsec:fourpointrelation}
\cite{BG19} shows that the stochastic six vertex model height function converges to a telegraph equation and its fluctuation field converges to a stochastic telegraph equation. The key observation is the following \emph{four point relation}, which says that if we define
\begin{equation*}
\xi^{\text{S6V}}(x+1,y+1) = q^{H(x+1, y+1)} - b_1 q^{H(x, y+1)} - b_2 q^{H(x+1, y)} + (b_1 + b_2  -1) q^{H(x, y)},
\end{equation*}
Here $b_1, b_2$ are the weight of the six vertex model configuration (in our notation $b_1 = \frac{\alpha + \nu}{1 + \alpha}$, $b_2 = \frac{1 + \alpha q}{1 + \alpha}$). Then the conditional expectation and variance of $\xi$ read
\begin{align}\label{eq:introfourpoint}
&\mathbb{E}\Big[\xi^{\text{S6V}}(x+1,y+1) \big| \mathcal{F}(x,y)\Big] = 0,
\\
\label{eq:introfourpointquad}
&\mathbb{E}\Big[\xi^{\text{S6V}}(x+1, y+1)^2 \big| \mathcal{F}(x,y)\Big] = \gamma_1 \Delta_x \Delta_y + \gamma_2 q^{H(x, y)} \Delta_x  + \gamma_3 q^{H(x, y)} \Delta_y,
\end{align}
where $\mathcal{F}(x,y)$ is a sigma algebra generated by $\{H(u, v): u \leq x \text{ or } v \leq y\}$ and $\Delta_x := q^{H(x+1, y)} - q^{H(x, y)}$, $\Delta_y := q^{H(x, y+1)} - q^{H(x, y)}$.  The parameters $\gamma_i, i = 1,2,3 $ depend on $b_1, b_2$. 
\bigskip
\\
In our paper, we generalize the above relations to the SHS6V model. Define 
\begin{equation*}
\xi^{\text{S6SHV}} (x+1, y+1) = q^{H(x+1, y+1)} - \frac{\alpha + \nu}{1 + \alpha} q^{H(x, y+1)} - \frac{1 + \alpha q^J}{1 + \alpha} q^{H(x+1, y)} + \frac{\nu + \alpha q^J}{1 + \alpha} q^{H(x, y)},
\end{equation*}
We prove (respectively in Theorem \ref{thm:fourpoint} and Theorem \ref{thm:fourpointquad}) that  
\begin{align}\label{eq:introfourpoint1}
&\mathbb{E}\Big[\xi^{\text{SHS6V}}(x+1,y+1) \big| \mathcal{F}(x,y)\Big] = 0,\\
\label{eq:introfourpointquad1}
&\mathbb{E}\Big[\xi^{\text{SHS6V}}(x+1, y+1)^2 \big| \mathcal{F}(x,y)\Big] = \gamma_1 \Delta_x \Delta_y + \gamma_2 q^{H(x, y)} \Delta_x + \gamma_3 q^{H(x, y)} \Delta_y + \mathbf{R}(x, y).
\end{align}
$\mathbf{R}(x, y)$ is an error term that is negligible under our scaling. From now on, we may also use $\xi$ to denote $\xi^{\text{SHS6V}}$.
\bigskip
\\
Why does such a generalization exist? In the context of the stochastic six vertex model, \eqref{eq:introfourpoint} is related to the self-duality discovered in \cite[Proposition 2.20]{CP16}, though it is more of a local relation than the way duality is generally stated (it is unclear to us how to prove \eqref{eq:introfourpoint} from the duality directly). In fact, \cite[Corollary 3.3]{CP16} shows that the SHS6V model with general $I, J$ enjoys the same self-duality, so it is natural to expect that \eqref{eq:introfourpoint1}, as a generalized version of \eqref{eq:introfourpoint} holds.  For the quadratic variation, the situation is more subtle for the SHS6V model. We do not come up with a simple reason why \eqref{eq:introfourpointquad1} holds, though this may be understandable from our proof, which is briefly explained in the next paragraph. Here, we just emphasize that as shown in Remark \ref{rmk:quadratic}, there exist no $\gamma_i, i = 1,2,3 $ such that the identity without an error term holds for the SHS6V model. We also emphasize that it is only under our scaling \eqref{eq:scaling} that $\mathbf{R}(x, y)$ is negligible.
\bigskip
\\
Let us explain the ideas and techniques used in proving \eqref{eq:introfourpoint1} and \eqref{eq:introfourpointquad1}. In \cite{BG19}, the authors prove \eqref{eq:introfourpoint} and \eqref{eq:introfourpointquad} via a direct computation, which corresponds to enumerating all possible six vertex configurations. In our case, the situation is more involved: when $J$ is large, the expression of $L_{\alpha}^{(J)}$ is so complicated that it is hopeless to check these relations directly. Alternatively, we first verify them directly for $J = 1$, in which case the $\mathbb{L}$-matrix has a simple expression given by Definition \ref{def:j1lmatrix}. For general $J$, we use fusion, which allows us to decompose the general $J$ vertex into a sequence of $J = 1$ vertices (see Figure \ref{fig:vertexdecompose}). Repeatedly using the $J = 1$ version of \eqref{eq:introfourpoint1} (where the spectral parameter $\alpha$ is replaced by $\alpha q^i$ in the expression of $\xi$), we get $J$ identities. Summing up these identities in a clever way, we see a telescoping property and \eqref{eq:introfourpoint1} follows. To prove \eqref{eq:introfourpointquad1}, besides using fusion, we need to refer to the property of our scaling \eqref{eq:scaling}, which says that
with a probability converging to $1$, the lines entering into a vertex will keep flowing in the same direction (see Lemma \ref{lem:scalingmatrix}).
\bigskip
\\  
In \cite{CP16}, the fusion was stated in a way that the spectral parameters progress geometrically by $q$ from bottom to top when we decompose the general $J$ vertex to a column of $J = 1$ vertices. It turns out that (Lemma \ref{lem:vertexid}) we can also reverse the direction and let the parameters progress geometrically by $q$ from top to bottom (meanwhile we change the probability distribution assigned on the input lines from the left). We did not see this result elsewhere. Note that it is only after this reversal of the spectral parameters that we obtain the telescoping property mentioned in the previous paragraph.
\subsection{Stochastic telegraph equation as a scaling limit of the SHS6V model}
Having established the four point relation, we are ready to talk about our result. We show that under our scaling,
\begin{enumerate}[(i).]
\item (Hydrodynamic limit (or law of large numbers) -- Theorem \ref{thm:lln}): The SHS6V model height function converges uniformly in probability to a telegraph equation.
\item (Functional central limit theorem -- Theorem \ref{thm:clt} (also see Corollary \ref{cor:clt})): The fluctuation field of the height function around its hydrodynamic limit (viewed as a random continuous function) converges weakly to a stochastic telegraph equation.
\end{enumerate}
Once we have proved the four point relation for the SHS6V model, the proof for the law of large numbers is akin to \cite[Theorem 5.1]{BG19}. For the functional central limit theorem, 
our proof breaks down into proving the finite dimensional weak convergence (Proposition \ref{prop:cltfinitedim}) and tightness (Proposition \ref{prop:tightness}). For finite dimensional convergence,
the proof follows a similar idea as in \cite[Theorem 6.1]{BG19}, subject to certain generalization. For 
the tightness, we rely on the Burkholder inequality and a careful control of joint moments of $\xi$ at different locations (Lemma \ref{lem:technical bound}). We remark that the proof of the tightness may not fit to the regime of classical functional martingale CLT result (e.g. \cite[Section 6]{Bro71}), see Remark \ref{rmk:tightness} for more discussion.
\bigskip
\\ 
To present our results, let us first introduce our scaling.
Fix $I, J \in \mathbb{Z}_{\geq 1}$ and positive $\beta_1, \beta_2$ such that $\beta_1 \neq \beta_2$ , we scale the parameter $q, \alpha$ in the way that 
\begin{equation}\label{eq:scaling}
q =e^\frac{\beta_1 - \beta_2}{L}, \qquad \frac{1 + \alpha q^J}{1 + \alpha} = 
e^{-\frac{J\beta_2}{L}}, \quad L \to \infty.
\end{equation}
It is straightforward that 
as $L \to \infty$, $\alpha$ and $q$ always satisfy one of the conditions given in Corollary  \ref{cor:stochasticj}, thus $L_{\alpha}^{(J)}$ is indeed stochastic.
\begin{theorem}\label{thm:lln}
Define $\mathfrak{q} = e^{\beta_1 - \beta_2}$ and fix $A, B > 0$, consider two monotone Lipschitz functions $\chi$ and $\psi$. 
Suppose that the boundary for the SHS6V model is chosen in the way that as $L \to \infty$,
$\frac{1}{L} H(Lx, 0)  \to \chi(x)$ and $\frac{1}{L} H(0, Ly) \to \psi(y)$ uniformly in probability for $x \in [0, A]$ and $y \in [0, B]$, then as $L \to \infty$,
\begin{equation*}
 \frac{1}{L} \sup_{x \in [0, A] \times [0, B]} |H(Lx, Ly) - L \mathbf{h}(x, y)| \p 0,
\end{equation*}
where $\p$ means the convergence in probability. $\mathfrak{q}^{\mathbf{h}(x, y)}$ is the unique solution to the telegraph equation
\begin{equation}\label{eq:llntelegraph}
\frac{\partial^2}{\partial x \partial y} \mathfrak{q}^{\mathbf{h}(x, y)} + J\beta_2 \frac{\partial}{\partial x} \mathfrak{q}^{\mathbf{h}(x, y)} + I\beta_1 \frac{\partial}{\partial y} \mathfrak{q}^{\mathbf{h}(x, y)} = 0,
\end{equation}
with the boundary condition specified by $\mathfrak{q}^{\mathbf{h}(x, 0)} = \mathfrak{q}^{\chi(x)}$ and $\mathfrak{q}^{\mathbf{h(0, y)}} = \mathfrak{q}^{\psi(y)}$. 
\end{theorem}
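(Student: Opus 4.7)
The plan is to parallel \cite[Theorem 5.1]{BG19}, taking the four point relation \eqref{eq:introfourpoint1} as the starting point. Set $Q(x,y) := q^{H(x,y)}$, $b_1 := (\alpha+\nu)/(1+\alpha)$, $b_2 := (1+\alpha q^J)/(1+\alpha)$; since $b_1 + b_2 - 1 = (\nu+\alpha q^J)/(1+\alpha)$, the four point relation is precisely the assertion that $Q$ solves the discrete stochastic telegraph equation \eqref{eq:discretetelegraph} with driving term $g = \xi$, a martingale difference array adapted to $\mathcal{F}(\cdot,\cdot)$, and with boundary data $Q(0,y) = q^{H(0,y)}$, $Q(x,0) = q^{H(x,0)}$. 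Expanding \eqref{eq:scaling} yields $b_2 = e^{-J\beta_2/L}$ exactly and $b_1 = e^{-I\beta_1/L}(1 + O(L^{-2}))$, whence $1-b_1 = I\beta_1/L + O(L^{-2})$, $1-b_2 = J\beta_2/L$, and $b_2 - b_1 = (I\beta_1 - J\beta_2)/L + O(L^{-2})$.

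By the discrete Duhamel representation \eqref{eq:temp29}, write $Q(X,Y) = \mathcal{D}(X,Y) + \mathcal{S}(X,Y)$, where $\mathcal{S}(X,Y) := \sum_{1\le x'\le X,\,1\le y'\le Y} \mathcal{R}^d(X,Y;x',y')\,\xi(x',y')$ collects the noise and $\mathcal{D}$ collects the boundary contributions. For the deterministic part, insert $(X,Y) = (Lx,Ly)$ and rescale the integration variable in \eqref{eq:discreteRiemann} so that the pole and the inner ratios stabilize; a direct asymptotic analysis then gives $L\cdot \mathcal{R}^d(Lx,Ly;Lx',Ly') \to \mathcal{R}(x,y;x',y')$ uniformly on compact subsets, where $\mathcal{R}$ is the continuous Riemann function \eqref{eq:riemannfunction} with the parameters $\beta_1,\beta_2$ of \eqref{eq:telegraph} identified as $I\beta_1,\,J\beta_2$. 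Combined with $q^{H(Lx,0)} \to \mathfrak{q}^{\chi(x)}$ (which follows from the assumed boundary convergence together with the uniform boundedness of $q^{H}$), the Riemann sums defining $\mathcal{D}$ pass to integrals, and \eqref{eq:solutiontelegraph} with $f\equiv 0$ and boundary data $\mathfrak{q}^{\chi},\mathfrak{q}^{\psi}$ identifies the limit as the unique solution of \eqref{eq:llntelegraph}, namely $\mathfrak{q}^{\mathbf{h}(x,y)}$. For the noise part, orthogonality of the martingale differences gives $\mathrm{Var}(\mathcal{S}(X,Y)) = \sum_{x',y'} \mathcal{R}^d(X,Y;x',y')^2\,\mathbb{E}[\xi(x',y')^2]$. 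Rewriting $\xi(x+1,y+1) = [Q(x+1,y+1)-Q(x,y)] - b_1[Q(x,y+1)-Q(x,y)] - b_2[Q(x+1,y)-Q(x,y)]$ (via $1-b_1-b_2+(b_1+b_2-1) = 0$), and using that $H$ has lattice increments bounded by $\max(I,J)$ while $q-1 = O(L^{-1})$, one obtains the deterministic bound $|\xi| = O(L^{-1})$. Together with $|\mathcal{R}^d| = O(L^{-1})$ (from the prefactor $b_2-b_1 = O(L^{-1})$ in \eqref{eq:discreteRiemann}) and $O(L^{2})$ summands, this gives $\mathrm{Var}(\mathcal{S}(Lx,Ly)) = O(L^{-2}) \to 0$, and Chebyshev yields $Q(Lx,Ly) \to \mathfrak{q}^{\mathbf{h}(x,y)}$ in probability pointwise, equivalently $L^{-1}H(Lx,Ly) \to \mathbf{h}(x,y)$.

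Since line conservation at each vertex bounds the increments of $H$ by $I$ per horizontal lattice step and $J$ per vertical lattice step, the rescaled fields $L^{-1}H(L\cdot,L\cdot)$ are uniformly Lipschitz and monotone on $[0,A]\times[0,B]$; combined with pointwise convergence in probability on a countable dense subset, a standard equicontinuity argument upgrades this to uniform convergence in probability, giving the claim. The main obstacle is the uniform asymptotic analysis of the contour integral \eqref{eq:discreteRiemann}: one must simultaneously track the pole at $-1/(b_2(1-b_1))$ (which diverges like $L$), the factors raised to powers of order $L$, and verify that the limiting contour extracts the residue structure of $\mathcal{R}$ with the correct identification of parameters. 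A secondary point worth noting is that the error term $\mathbf{R}(x,y)$ in \eqref{eq:introfourpointquad1} is sidestepped entirely for the LLN, since the cruder deterministic bound $|\xi| = O(L^{-1})$ already suffices in place of \eqref{eq:introfourpointquad1}.
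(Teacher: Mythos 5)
Your Duhamel-based decomposition $Q=\mathcal{D}+\mathcal{S}$ is a legitimate alternative to the paper's route: the paper does not invoke \eqref{eq:temp29} for the LLN at all, but instead (i) sums the discrete four-point relation over a rectangle and identifies the limit of $\mathbb{E}[q^{H(L\cdot,L\cdot)}]$ by Arzela--Ascoli plus uniqueness of the telegraph boundary-value problem, and (ii) shows $U=q^H-\mathbb{E}[q^H]$ solves the discrete equation driven by $\xi$, controls $\sum\xi$ by Doob's maximal inequality, and again passes to a limiting integral equation whose only solution is $0$. Your route in principle also gives quantitative rates, at the price of the contour-integral asymptotics you flag. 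The deterministic piece $\mathcal{D}$ is handled essentially correctly, but the estimate on the noise piece $\mathcal{S}$ contains a genuine error.

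The discrete Riemann function $\mathcal{R}^d$ is $O(1)$, not $O(L^{-1})$. In \eqref{eq:discreteRiemann} the prefactor $(b_2-b_1)=O(L^{-1})$ is compensated by the residue at $z=-1/(b_2(1-b_1))$, which scales like $(b_2-b_1)^{-1}=O(L)$; one can check directly that $\mathcal{R}^d(X,Y;X,Y)=1$ exactly. Correspondingly $\mathcal{R}^d(LX,LY;Lx,Ly)\to\mathcal{R}_{IJ}(X,Y;x,y)$ with \emph{no} extra factor of $L$ (this is the normalization used in the CLT proof near \eqref{eq:temp20}); your claim $L\cdot\mathcal{R}^d\to\mathcal{R}$ is off by a factor of $L$. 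Plugging the correct $|\mathcal{R}^d|=O(1)$, the crude deterministic bound $|\xi|\le CL^{-1}$ (hence $\mathbb{E}[\xi^2]=O(L^{-2})$), and $O(L^2)$ summands into the orthogonality identity gives $\mathrm{Var}(\mathcal{S})=O(1)$, which does not vanish. Your concluding remark that the LLN sidesteps the quadratic estimate entirely, with $|\xi|=O(L^{-1})$ sufficing, is therefore false.

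What is actually required is the sharper conditional bound $\mathbb{E}[\xi(x+1,y+1)^2\mid\mathcal{F}(x,y)]\le CL^{-3}$ of Corollary~\ref{cor:quadbound}, which yields $\mathrm{Var}(\mathcal{S})=O(L^{-1})\to 0$. The extra power of $L^{-1}$ does not come from $\mathcal{R}^d$ but from cancellation inside $\xi$: by Lemma~\ref{lem:scalingmatrix}, with probability $1-O(L^{-1})$ the vertex passes its lines straight through, and on that event the telescoping in \eqref{eq:xi} forces $|\xi|=O(L^{-2})$; only on the complementary $O(L^{-1})$-probability event is $|\xi|=O(L^{-1})$. This gives $\mathbb{E}[\xi^2]=O(L^{-4})+O(L^{-1})\cdot O(L^{-2})=O(L^{-3})$. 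This second-moment input is unavoidable, which is why the paper invokes Corollary~\ref{cor:quadbound} (derived from Theorem~\ref{thm:fourpointquad}) even in the law-of-large-numbers proof.
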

We remark that there is a typo in \cite[Eq. 69]{BG19} about the boundary condition, $\mathfrak{q}^{\mathbf{h}(x, 0)}, \mathfrak{q}^{\mathbf{h}(0, y)}$ should equal $\mathfrak{q}^{\chi(x)}$ and $\mathfrak{q}^{\psi(y)}$, instead of $\chi(x)$ and $\psi(y)$. 
\bigskip
\\
Having established the law of large number for the height function, we proceed to show the functional central limit theorem. As a convention, we endow the space $C(\mathbb{R}^2_{\geq 0})$ with the topology of uniform convergence over compact subsets and use $``\Rightarrow"$ to denote the weak convergence. Recall that we linearly extend $H(x, y)$ for non-integer $x, y$, so $H(x, y) \in C(\mathbb{R}_{\geq 0}^2)$.
\begin{theorem}\label{thm:clt}
Assuming further that $\chi(x)$ and $\psi(y)$ are piecewise $C^1$-smooth, we have the weak convergence as $L \to \infty$,
\begin{equation*}
\sqrt{L} \bigg(q^{H(Lx, Ly)} - \mathbb{E}\Big[q^{H(Lx, Ly)}\Big]\bigg) \Rightarrow \varphi(x, y) \quad \text{in } C(\mathbb{R}^2_{\geq 0}),
\end{equation*} 
where $\varphi(x, y)$ is a random continuous function which solves the stochastic telegraph equation
\begin{equation}\label{eq:temp13}
\varphi_{xy} + I \beta_1 \varphi_y + J \beta_2 \varphi_x = \eta \cdot \sqrt{(\beta_1 + \beta_2) \mathfrak{q}^{\mathbf{h}}_x \mathfrak{q}^{\mathbf{h}}_y + J(\beta_2 - \beta_1) \beta_2 \mathfrak{q}^{\mathbf{h}} \mathfrak{q}^{\mathbf{h}}_x + I(\beta_1 - \beta_2) \beta_1 \mathfrak{q}^{\mathbf{h}} \mathfrak{q}^{\mathbf{h}}_y},
\end{equation}
Here, $\mathfrak{q}_x^{\mathbf{h}} := \partial_x (\mathfrak{q}^{\mathbf{h}(x, y)})$ and  $\mathfrak{q}_y^{\mathbf{h}} := \partial_y (\mathfrak{q}^{\mathbf{h}(x, y)})$, the boundary of $\varphi$ is given by zero. 
\end{theorem}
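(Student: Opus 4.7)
The plan is the standard two-step approach for weak convergence in $C(\mathbb{R}^2_{\geq 0})$: first prove convergence of finite-dimensional distributions of $F_L(x,y) := \sqrt{L}\bigl(q^{H(Lx,Ly)} - \mathbb{E}[q^{H(Lx,Ly)}]\bigr)$ to a centered Gaussian field, and then prove tightness on every compact rectangle $[0,A]\times [0,B]$. Identification of the Gaussian limit with the solution $\varphi$ of \eqref{eq:temp13} is automatic once the covariance matches, because the covariance formula \eqref{eq:covariance} together with the Riemann representation \eqref{eq:solutiontelegraph} uniquely determine $\varphi$.

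For the finite-dimensional step, the starting observation is that \eqref{eq:introfourpoint1} says that $\xi(x+1,y+1)$ is a martingale difference with respect to $\mathcal{F}(x,y)$ and that $\Phi(X,Y) = q^{H(X,Y)}$ solves the discrete telegraph equation \eqref{eq:discretetelegraph} with $b_1=(\alpha+\nu)/(1+\alpha)$, $b_2=(1+\alpha q^J)/(1+\alpha)$ and source $g=\xi$. Plugging into \eqref{eq:temp29} and subtracting the expectation, the boundary contributions become deterministic in the limit (by the assumption on $\chi,\psi$) and one is left with
\[
F_L(X,Y) = \sqrt{L}\sum_{x=1}^{LX}\sum_{y=1}^{LY} \mathcal{R}^d(LX,LY;x,y)\,\xi(x,y) + o(1).
\]
Under the scaling \eqref{eq:scaling}, a saddle point analysis of the contour integral \eqref{eq:discreteRiemann} shows $L\cdot\mathcal{R}^d(LX,LY;Lx,Ly)\to \mathcal{R}(X,Y;x,y)$ where $\mathcal{R}$ is given by \eqref{eq:riemannfunction} with parameters $I\beta_1,\,J\beta_2$. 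The predictable quadratic variation of the above sum, computed from \eqref{eq:introfourpointquad1} after showing that the error term $\mathbf{R}$ is $o(L^{-2})$ uniformly in $(x,y)$, is a Riemann sum that converges by Theorem \ref{thm:lln} to
\[
\int_0^{X}\!\int_0^{Y} \mathcal{R}(X,Y;x,y)^2\bigl[(\beta_1+\beta_2)\mathfrak{q}^{\mathbf{h}}_x\mathfrak{q}^{\mathbf{h}}_y+J(\beta_2-\beta_1)\beta_2\mathfrak{q}^{\mathbf{h}}\mathfrak{q}^{\mathbf{h}}_x+I(\beta_1-\beta_2)\beta_1\mathfrak{q}^{\mathbf{h}}\mathfrak{q}^{\mathbf{h}}_y\bigr]\,dx\,dy.
\]
A multivariate martingale CLT (the Lindeberg condition being trivial since $|\xi|=O(1/L)$ uniformly) then yields joint Gaussian convergence at finitely many points with covariance matching $\varphi$.

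For tightness I would verify a Kolmogorov-Chentsov criterion in two variables, namely find $p>2$ and $\epsilon>0$ such that
\[
\mathbb{E}\bigl[|F_L(x_1,y_1)-F_L(x_2,y_2)|^p\bigr] \le C\bigl(|x_1-x_2|+|y_1-y_2|\bigr)^{1+\epsilon}
\]
uniformly in $L$. The difference $F_L(x_1,y_1)-F_L(x_2,y_2)$ is again a weighted martingale sum of $\xi$ against a kernel built from differences of $\mathcal{R}^d$, so Burkholder's inequality reduces everything to moment estimates of the conditional quadratic variation plus joint higher-moment control of products $\xi(x_{(1)},y_{(1)})\cdots \xi(x_{(p)},y_{(p)})$ at distinct locations; this is exactly the role of Lemma \ref{lem:technical bound}, which would be combined with smoothness estimates on the discrete Riemann kernel.

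The main obstacle I anticipate is the tightness step. The one-step martingale property only kills the largest index, so the $p$-point correlations of $\xi$ are nonzero and must be shown of order $O(L^{-p})$ with enough decay in the pairwise separations to beat the size of the Riemann kernel; this quantitative joint-moment control is delicate and does not follow from \eqref{eq:introfourpointquad1} alone. A second subtlety, noted in Remark \ref{rmk:tightness}, is that the filtration is indexed by the partial order on $\mathbb{Z}_{\geq 0}^2$ coming from the sequential sampling in Definition \ref{def:SHS6V}, so the classical functional martingale CLT of \cite{Bro71} does not apply directly; I would therefore keep the finite-dimensional martingale CLT and the bare-hands Kolmogorov tightness as two separate arguments rather than try to invoke a ready-made two-parameter functional CLT.
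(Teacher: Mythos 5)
Your proposal matches the paper's own two-step proof (Propositions \ref{prop:cltfinitedim} and \ref{prop:tightness}) essentially line for line: finite-dimensional convergence via a martingale CLT applied to the weighted sum $\sqrt{L}\sum_{x,y}\mathcal{R}^d(LX,LY;x,y)\,\xi(x,y)$, with quadratic variation identified through Theorem \ref{thm:fourpointquad} and Theorem \ref{thm:lln}, followed by Kolmogorov--Chentsov tightness obtained from Burkholder--Davis--Gundy together with the joint-moment bound of Lemma \ref{lem:technical bound}, and the same observation as Remark \ref{rmk:tightness} that the classical two-parameter martingale functional CLT does not apply. One small slip worth noting: the correct kernel limit is $\mathcal{R}^d(LX,LY;Lx,Ly)\to\mathcal{R}_{IJ}(X,Y;x,y)$ with \emph{no} extra factor of $L$ (the cancellation between the shrinking contour radius and the $dz$ measure already does the normalization); with your extra $L$ the predictable quadratic variation would vanish in the limit, contradicting the rest of your own computation.
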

\begin{remark}\label{rmk:covariance}
By \eqref{eq:covariance}, it is clear  that $\varphi$ is a Gaussian field with covariance function 
\begin{align*}
\emph{Cov}\Big(\varphi(X_1, Y_1), \varphi(X_2, Y_2)\Big) =& \!\int_0^{X_1 \wedge Y_1}\! \int_0^{X_2 \wedge Y_2}\! \mathcal{R}_{IJ}(X_1, Y_1, x, y) \mathcal{R}_{IJ}(X_2, Y_2, x, y)\\ &\times \Big((\beta_1 + \beta_2) \mathfrak{q}^{\mathbf{h}}_x \mathfrak{q}^{\mathbf{h}}_y + J(\beta_2 - \beta_1) \beta_2 \mathfrak{q}^{\mathbf{h}} \mathfrak{q}^{\mathbf{h}}_x + I(\beta_1 - \beta_2) \beta_1 \mathfrak{q}^{\mathbf{h}} \mathfrak{q}^{\mathbf{h}}_y \Big) dxdy,
\end{align*}
where $\mathcal{R}_{IJ}$ is the Riemann function in \eqref{eq:riemannfunction} with $\beta_1$ and $\beta_2$ replaced by $I \beta_1$ and $J \beta_2$ respectively, i.e.
\begin{equation}\label{eq:riemannIJ}
\mathcal{R}_{IJ} (X, Y; x, y) = \frac{1}{2\pi \mathbf{i}} \oint_{-I\beta_1} \frac{J\beta_2 - I\beta_1}{(z+ I \beta_1) (z+J\beta_2)} \exp\bigg[(I\beta_1 - J\beta_2)\Big(-(X-x) \frac{z}{z + J\beta_2} + (Y-y) \frac{z}{z + I\beta_1}\Big)\bigg] dz,
\end{equation}
\end{remark}
As a corollary of the previous results, we have the following.
\begin{cor}\label{cor:clt}
As $L \to \infty$,
\begin{equation*}
\frac{H(Lx, Ly) - \mathbb{E}\big[H(Lx, Ly)\big]}{\sqrt{L}} \Rightarrow \phi(x, y) \quad \text{in } C(\mathbb{R}^2_{\geq 0}),
\end{equation*}
$\phi(x, y)$ is a Gaussian field given by $\phi(x, y):= \frac{\varphi(x, y)}{\mathfrak{q}^{\mathbf{h}(x, y)} \log \mathfrak{q}} $, which solves 
\begin{equation}\label{eq:temp11}
\phi_{xy} + I\beta_1 \phi_y + J\beta_2 \phi_x + (\beta_1 - \beta_2) (\phi_y \mathbf{h}_x + \phi_x \mathbf{h}_y) = \eta \cdot \sqrt{(\beta_1 + \beta_2) \mathbf{h}_x \mathbf{h}_y - J\beta_2 \mathbf{h}_x + I\beta_1 \mathbf{h}_y }.
\end{equation}
\end{cor}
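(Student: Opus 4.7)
The plan is to deduce the corollary from Theorem \ref{thm:clt} by inverting the map $H \mapsto q^H$ through a Taylor expansion of the logarithm. Since $\log q = (\beta_1 - \beta_2)/L$, this inversion contributes exactly one factor of $L$, which matches the $\sqrt{L}$ normalizations on the two sides.

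Write $Q_L(x,y) := q^{H(Lx,Ly)}$, $\bar Q_L(x,y) := \mathbb{E}[Q_L(x,y)]$, and $F_L := \sqrt{L}(Q_L - \bar Q_L)$. Theorem \ref{thm:lln} (together with continuity of the exponential) forces both $Q_L$ and $\bar Q_L$ to converge uniformly in probability on compacts to $\mathfrak{q}^{\mathbf{h}}$, which is bounded away from $0$ on any compact set; Theorem \ref{thm:clt} gives $F_L \Rightarrow \varphi$ in $C(\mathbb{R}_{\geq 0}^2)$. On any compact and for $L$ large, the inequality $|Q_L/\bar Q_L - 1| < 1$ holds on an event whose probability tends to $1$, so the series
\[
\log Q_L - \mathbb{E}[\log Q_L] = \sum_{k \geq 1} \frac{(-1)^{k+1}}{k}\left[\left(\frac{Q_L - \bar Q_L}{\bar Q_L}\right)^k - \mathbb{E}\left[\left(\frac{Q_L - \bar Q_L}{\bar Q_L}\right)^k\right]\right]
\]
converges almost surely there. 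Dividing by $\sqrt{L}\log q = \log\mathfrak{q}/\sqrt{L}$ and using $H = \log Q_L/\log q$ yields
\[
\phi_L := \frac{H(Lx,Ly) - \mathbb{E}[H(Lx,Ly)]}{\sqrt{L}} = \frac{F_L}{\bar Q_L\, \log\mathfrak{q}} + \frac{1}{\log\mathfrak{q}}\sum_{k \geq 2} \frac{(-1)^{k+1}}{k\, L^{(k-1)/2}\, \bar Q_L^k}\,\bigl(F_L^k - \mathbb{E}[F_L^k]\bigr).
\]
By the continuous mapping theorem the leading term converges weakly in $C(\mathbb{R}_{\geq 0}^2)$ to $\varphi/(\mathfrak{q}^{\mathbf{h}}\log\mathfrak{q}) = \phi$. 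Every term in the remainder carries a factor $L^{-(k-1)/2}$ with $k \geq 2$; uniform moment bounds on $\sup_K|F_L|$ (which come from the tightness argument of Proposition \ref{prop:tightness}) force the remainder to vanish uniformly in probability on compacts, so Slutsky's theorem gives $\phi_L \Rightarrow \phi$ in $C(\mathbb{R}_{\geq 0}^2)$.

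To check that $\phi$ solves \eqref{eq:temp11}, I would substitute $\varphi = g\phi$ with $g := (\log\mathfrak{q})\,\mathfrak{q}^{\mathbf{h}}$ into \eqref{eq:temp13} and apply the product rule. Because $\mathfrak{q}^{\mathbf{h}}$ satisfies the telegraph equation \eqref{eq:llntelegraph}, the coefficient of $\phi$ in the resulting identity vanishes. The surviving cross terms $g_x\phi_y + g_y\phi_x$, together with $g_x = (\log\mathfrak{q})^2\mathfrak{q}^{\mathbf{h}}\mathbf{h}_x$ and the analogous expression for $g_y$, contribute exactly the drift $(\beta_1 - \beta_2)(\mathbf{h}_x\phi_y + \mathbf{h}_y\phi_x)$ after dividing by $g$. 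A short algebraic reduction of the noise coefficient in \eqref{eq:temp13} — factoring $(\log\mathfrak{q})^2(\mathfrak{q}^{\mathbf{h}})^2$ out of the radicand — recovers the right-hand side of \eqref{eq:temp11}, with the sign of $\log\mathfrak{q}$ absorbed into the symmetric space-time white noise $\eta$.

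The main technical obstacle is the uniform control of the remainder series in the topology of $C(\mathbb{R}_{\geq 0}^2)$ rather than merely at a fixed point. This requires uniform $L^p$ bounds on $\sup_K|F_L|$ of sufficiently high order, plus a dominated-convergence-style argument to exchange the infinite sum with the weak limit. The necessary moment estimates should already be present in the proof of Proposition \ref{prop:tightness} through Burkholder's inequality and Lemma \ref{lem:technical bound}; the work is in extracting them explicitly and propagating them through the expansion.
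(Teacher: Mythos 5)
Your approach is essentially the same as the paper's: invert $H \mapsto q^H$ by Taylor-expanding the logarithm, use Theorem~\ref{thm:lln} to keep $\mathbb{E}[q^{H(Lx,Ly)}]$ bounded away from zero, isolate the linear term as $\varphi/(\mathfrak{q}^{\mathbf{h}}\log\mathfrak{q})$, and kill the remainder using the tightness of $U_L$ from Proposition~\ref{prop:tightness}. The algebra in your PDE verification (substituting $\varphi = g\phi$ with $g = (\log\mathfrak{q})\mathfrak{q}^{\mathbf{h}}$, cancelling the $\phi$ coefficient via the homogeneous telegraph equation \eqref{eq:llntelegraph}, factoring $(\log\mathfrak{q})^2(\mathfrak{q}^{\mathbf{h}})^2$ out of the radicand and absorbing the resulting sign into $\eta$) is correct and more explicit than the paper, which dismisses it as a ``straightforward computation.''

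The one place where you create extra work for yourself is the full power-series expansion of $\log(1+\cdot)$. Controlling an infinite series uniformly on compacts in probability requires you to exchange the sum with the weak limit and to maintain moment bounds on $\sup_K|F_L|^k$ for every $k$, which is why you flag this as the main technical obstacle. The paper sidesteps this entirely by Taylor's theorem with a single Lagrange remainder: the remainder $r_L$ satisfies the pointwise bound $|r_L(x,y)| \leq C\,U(Lx,Ly)^2$, so the error term is $\sqrt{L}(r_L - \mathbb{E}[r_L])$, and $\sqrt{L}\,U(Lx,Ly)^2 = L^{-1/2}U_L(x,y)^2$ tends to zero uniformly in probability directly from tightness of $U_L$ in $C$ (plus a second-moment bound from \eqref{eq:incrementbound} to kill $\sqrt{L}\,\mathbb{E}[r_L]$). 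No term-by-term limit interchange is needed. I would recommend replacing the series by that finite-order expansion; everything else in your argument goes through unchanged.
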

The rest of the paper is organized as follows. In Section \ref{sec:fourpoint}, we first establish an identity (Lemma \ref{lem:vertexid}), which gives an alternative way to apply fusion. Then, we prove our four point relation (Theorem \ref{thm:fourpoint} and Theorem \ref{thm:fourpointquad}). We also discuss some properties of our scaling (Lemma \ref{lem:scalingmatrix}). In Section \ref{sec:proofofresults}, we first use the four point relation to prove the law of large numbers (Theorem \ref{thm:lln}) and the finite dimensional version of the CLT (Proposition \ref{prop:cltfinitedim}). Then we establish the tightness (Proposition \ref{prop:tightness}) and improve our CLT to the functional level (Theorem \ref{thm:clt}).
\subsection{Acknowledgment}
The author wants to thank Ivan Corwin for many valuable comments on the paper; Vadim Gorin for helpful comments and discussion; and Shalin Parekh for an inspiring discussion about the tightness result.  The author was supported by Ivan Corwin through the NSF grants DMS-1811143, DMS-1664650
and also by the Minerva Foundation Summer Fellowship program.
\section{Four point relation}
\label{sec:fourpoint}
In this section, we prove the four point relation \eqref{eq:introfourpoint1} and \eqref{eq:introfourpointquad1} that mentioned in Section \ref{subsec:fourpointrelation}. To begin with, we present a lemma that allows us to reverse the spectral parameters upside down when we decompose the general $J$ vertex into a column of $J=1$ vertices, see Figure \ref{fig:vertexreverse1} for visualization. The key for our proof is an identity that allows us to switch a pair of vertices with different spectral parameters, see Figure \ref{fig:vertexreverse2}. We do not find such identity in the literature. It seems to us that this identity does not follow directly from the Yang-Baxter equation.  
\bigskip
\\
Define the stochastic matrix $\widetilde{\Lambda}$,
\begin{equation*}
\widetilde{\Lambda}(h, (h_1, \dots, h_J)) := 
\begin{cases}
\displaystyle \frac{1}{Z_J(h)}\prod_{h_i = 1} q^{J-i}  \qquad  &\text{ if } h = \sum_{i=1}^J h_i\\
0 \hspace{6.5em} &\text{ else}
\end{cases}
\end{equation*}
and 
\begin{equation*}
\widetilde{L}_{\alpha}^{\otimes_q J} (v, h_1, \dots, h_J; v', h'_1, \dots, h'_J) := \sum_{\substack{v_0, v_1, \dots, v_J \\ v_0 = v, v_J = v'}}\prod_{i = 1}^J L^{(1)}_{\alpha q^{J-i}} (v_{i-1}, h_i; v_{i}, h'_i).
\end{equation*}
Note that comparing with the expression of $\Lambda$ and $L^{\otimes_q J}_{\alpha}$, the term $q^{i-1}$ is replaced by $q^{J-i}$, which corresponds to reversing the spectral parameters upside down.
\begin{lemma}\label{lem:vertexid}
For fixed $h, v, h', v'$, the following identity holds,
\begin{align*}
&\sum_{\substack{(h_1, \dots, h_J) \in \{0, 1\}^J\\ (h'_1, \dots, h'_J) \in \{0, 1\}^J}} \Lambda(h; h_1, h_2, \dots h_J) L_{\alpha}^{\otimes_q J} (v, h_1, \dots, h_J; v', h'_1, \dots, h'_J)\, \Xi(h_1', \dots, h_J'; h')\\ 
\numberthis \label{eq:vertexreverse}
&= \sum_{\substack{(h_1, \dots, h_J) \in \{0, 1\}^J\\ (h'_1, \dots, h'_J) \in \{0, 1\}^J}} \widetilde{\Lambda}(h; h_1, h_2, \dots h_J) \widetilde{L}_{\alpha}^{\otimes_q J} (v, h_1, \dots, h_J; v', h'_1, \dots, h'_J)\, \Xi(h_1', \dots, h_J'; h').
\end{align*}
Consequently, we have alternate expression for the general $J$ vertex weight 
\begin{align}\label{eq:fusionreverse}
L_{\alpha}^{(J)}(v, h; v', h') = \sum_{\substack{(h_1, \dots, h_J) \in \{0, 1\}^J\\ (h'_1, \dots, h'_J) \in \{0, 1\}^J}} \widetilde{\Lambda}(h; h_1, h_2, \dots h_J) \widetilde{L}_{\alpha}^{\otimes_q J} (v, h_1, \dots, h_J; v', h'_1, \dots, h'_J)\, \Xi(h_1', \dots, h_J'; h').
\end{align}
\end{lemma}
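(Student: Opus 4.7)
The plan is to prove \eqref{eq:vertexreverse} via a bubble-sort argument built on a single key identity that swaps two adjacent vertices with consecutive spectral parameters. Since Lemma \ref{lem:fusion} identifies the left-hand side of \eqref{eq:vertexreverse} with $L^{(J)}_\alpha(v,h;v',h')$, the alternate expression \eqref{eq:fusionreverse} follows immediately once \eqref{eq:vertexreverse} is established.

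The main tool I would develop is a local pairwise vertex identity. For two stacked $J=1$ vertices with spectral parameters $\alpha_1$ (bottom) and $\alpha_2 = \alpha_1 q$ (top), with fixed outer boundary data $v, v' \in \{0,\ldots,I\}$ and $h_1, h_2, h_1', h_2' \in \{0,1\}$, set
\[
V(\alpha_1,\alpha_2;h_1,h_2,h_1',h_2') := \sum_{v_m} L^{(1)}_{\alpha_1}(v, h_1; v_m, h_1')\, L^{(1)}_{\alpha_2}(v_m, h_2; v', h_2').
\]
I would prove that there exist explicit scalars $c(h_1,h_2)$ and $\tilde c(h_1,h_2)$ such that
\[
c(h_1,h_2)\, V(\alpha_1,\alpha_2; h_1,h_2,h_1',h_2') \;=\; \tilde c(h_1,h_2)\, V(\alpha_2,\alpha_1; h_2,h_1,h_2',h_1'),
\]
that is, swapping the spectral parameters while simultaneously swapping the left inputs (and right outputs) rescales $V$ by an explicit ratio. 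Conservation forces $h_1+h_2 = h_1'+h_2'$, so only finitely many configurations arise, and in each case the identity reduces via Definition \ref{def:j1lmatrix} to an algebraic equality in $\alpha_1, q, \nu$ that can be verified directly.

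With this pairwise swap in hand, I bubble-sort the column: starting with parameters $(\alpha, \alpha q, \ldots, \alpha q^{J-1})$ bottom-to-top, I perform $\binom{J}{2}$ adjacent swaps to reach $(\alpha q^{J-1}, \ldots, \alpha)$. Each adjacent swap transposes a pair of left inputs and contributes a scalar factor $c/\tilde c$. Summing over $(h_1,\ldots,h_J)$ with the weight $\Lambda$, the accumulated factors reorganize into $\widetilde\Lambda$ evaluated on the original inputs; this uses the direct observation (from the definitions) that
\[
\widetilde\Lambda(h; h_1,\ldots,h_J) \;=\; \Lambda(h; h_J,\ldots,h_1),
\]
together with induction on $J$ to absorb the cumulative factors into the ratio $\widetilde\Lambda/\Lambda$. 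The right-boundary sum is preserved throughout because $\Xi$ is symmetric in its arguments.

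The principal obstacle is the pairwise swap identity itself which, as the excerpt emphasizes, is not a corollary of the Yang--Baxter equation and must be verified by direct computation. The delicate part is identifying the correct $c(h_1,h_2)$ and $\tilde c(h_1,h_2)$ whose ratio depends on $(h_1,h_2)$ in exactly the right way, so that their cumulative product over the bubble-sort reversal matches
\[
\frac{\widetilde\Lambda(h;h_1,\ldots,h_J)}{\Lambda(h;h_1,\ldots,h_J)} \;=\; \prod_{i:\, h_i=1} q^{J-2i+1}.
\]
After the swap identity is pinned down, the telescoping of these factors across adjacent transpositions should be a routine $q$-algebra computation.
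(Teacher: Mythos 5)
Your high-level architecture---a local two-row identity followed by a bubble sort of the spectral parameters---is the same as the paper's proof, and your peripheral observations (that $\widetilde\Lambda(h;h_1,\dots,h_J)=\Lambda(h;h_J,\dots,h_1)$ and that $\widetilde\Lambda/\Lambda=\prod_{i:h_i=1}q^{J-2i+1}$) are correct. But the pointwise swap identity you posit as the core ingredient does not hold: there are no scalars $c(h_1,h_2),\tilde c(h_1,h_2)$ (even if one allowed them to depend on $v,v'$) for which
\begin{equation*}
c(h_1,h_2)\,V(\alpha_1,\alpha_2;h_1,h_2,h_1',h_2')=\tilde c(h_1,h_2)\,V(\alpha_2,\alpha_1;h_2,h_1,h_2',h_1')
\end{equation*}
holds for all $h_1',h_2'$. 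Take $\alpha_2=\alpha_1 q=\alpha q$ and $(h_1,h_2)=(1,0)$. For $(h_1',h_2')=(1,0)$ one computes $V(\alpha,\alpha q;1,0,1,0)=\frac{(\alpha+\nu q^v)(1+\alpha q^{v+1})}{(1+\alpha)(1+\alpha q)}=V(\alpha q,\alpha;0,1,0,1)$, forcing $c(1,0)=\tilde c(1,0)$. For $(h_1',h_2')=(0,1)$, however, $V(\alpha,\alpha q;1,0,0,1)=\frac{(1-\nu q^v)\,\alpha q(1-q^{v+1})}{(1+\alpha)(1+\alpha q)}$ while $V(\alpha q,\alpha;0,1,1,0)=\frac{\alpha q(1-q^v)(1-\nu q^{v-1})}{(1+\alpha)(1+\alpha q)}$; their ratio $\frac{(1-\nu q^v)(1-q^{v+1})}{(1-q^v)(1-\nu q^{v-1})}$ is not $1$ and depends on $v$ (at $v=I$, where $\nu q^I=1$, the first vanishes while the second does not). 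So no admissible $c,\tilde c$ exist and the pairwise swap, as you state it, fails.

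What is actually true, and what the paper verifies directly for $J=2$, is the swap \emph{after} summing over $(h_1,h_2)$ with fixed $h_1+h_2$ (weighted by the conditional two-row $\Lambda$-weight, e.g.\ $1:q$ for $(1,0):(0,1)$ when $h_1+h_2=1$) \emph{and} over $(h_1',h_2')$ with fixed $h_1'+h_2'$. In the paper's $h=h'=1$ check the four summands do pair up termwise, but the pairing is the full reflection $(1,0;1,0)\leftrightarrow(0,1;0,1)$ only for the through-configurations, while the row-changing configurations $(1,0;0,1)$ and $(0,1;1,0)$ each match with themselves; hence no uniform per-configuration reflection can work, and one must work with the summed statement. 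Once you replace your pointwise lemma by this summed two-row identity, the bubble sort does go through: the conditional law of $(h_i,h_{i+1})$ under $\Lambda^{(J)}$, given the remaining $h_j$'s and $h_i+h_{i+1}$, is exactly the two-row weight $\Lambda^{(2)}$, which is what lets the $J=2$ identity be applied locally inside the $J$-fold column---precisely the structure the paper exploits.
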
 
\begin{proof}
By Lemma \ref{lem:fusion}, it is clear that \eqref{eq:vertexreverse} implies \eqref{eq:fusionreverse}. It suffices to prove \eqref{eq:vertexreverse}, which says, graphically 
\begin{figure}[ht]
\centering 
\includegraphics[scale = 1]{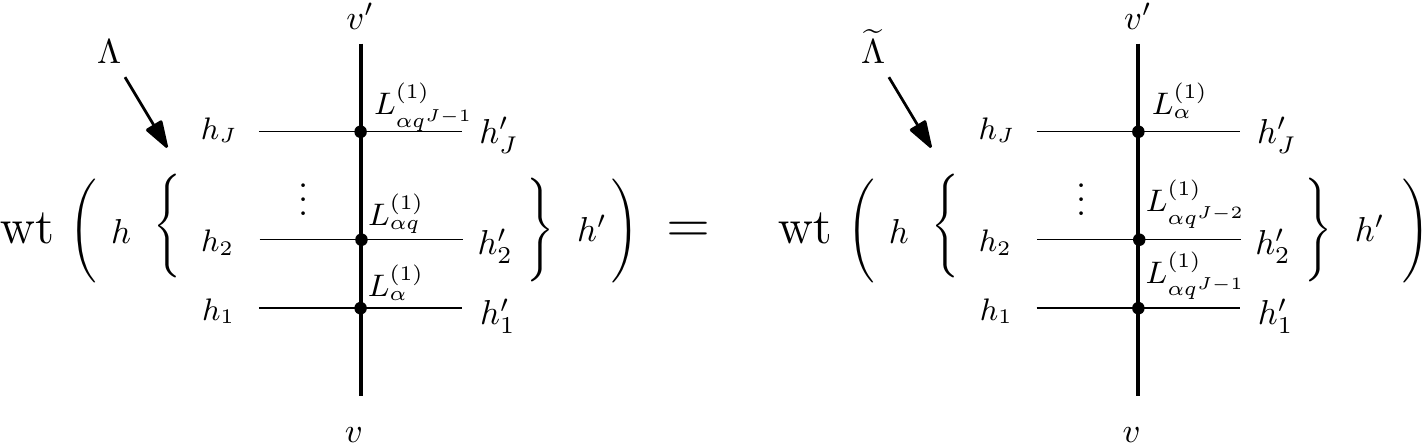}
\caption{Pictorial representation of the identity \eqref{eq:vertexreverse}. The weight (wt) of a
diagram is given by a summation of products of $\mathbb{L}$-matrices over $h_1, \dots, h_J$, with condition $h_1 + \dots + h_J = h$ and $h'_1 + \dots + h'_J = h'$. Each product on the left (resp. right) hand side in the summation is reweighted by $\Lambda(h; h_1, \dots, h_J)$ (resp. $\widetilde{\Lambda}(h; h_1, \dots, h_J)$).}
\label{fig:vertexreverse1}
\end{figure}
\bigskip
\\
When $J = 1$, the proof is trivial. When $J =2$, the identity \eqref{eq:vertexreverse} reduces to Figure \ref{fig:vertexreverse2}.
\begin{figure}[ht]
\centering
\includegraphics[scale = 1]{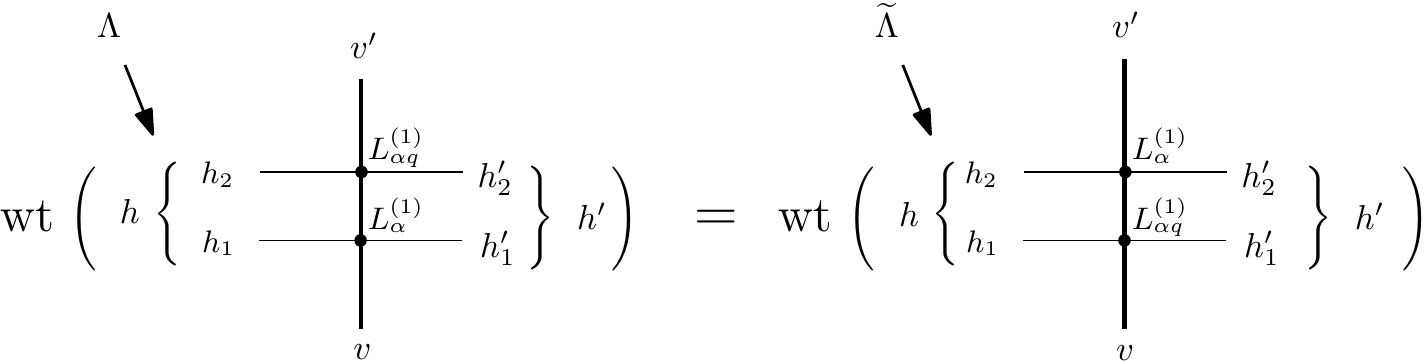}
\caption{Identity \eqref{eq:vertexreverse} when $J = 2$.}
\label{fig:vertexreverse2}
\end{figure}
Since $h, h' \in \{0, 1, 2\}$, there are nine cases in total. One can verify each case directly and here, we  
only show our verification for $h = 1$ and $h' = 1$, in which case the computation is more involved. The LHS in Figure \ref{fig:vertexreverse2} equals   
\begin{align*}
&\Lambda\big(1; (1, 0)\big) \Big(L^{(1)}_{\alpha}(\vt, 1; \vt, 1) L^{(1)}_{\alpha q}(\vt, 0; \vt, 0) + L^{(1)}_{\alpha}(\vt, 1; \vt+1, 0) L^{(1)}_{\alpha q}(\vt+1, 0; \vt, 1) \Big) \\ 
&+ \Lambda\big(1; (0, 1)\big) \Big(L^{(1)}_{\alpha}(\vt, 0; \vt-1, 1) L^{(1)}_{\alpha q}(\vt-1, 1; \vt, 0) + L^{(1)}_{\alpha}(\vt, 0; \vt, 0) L^{(1)}_{\alpha q}(\vt, 1; \vt, 1)\Big) \\
\numberthis\label{eq:temp25}
&= \frac{1}{1 + q} \Big(\frac{\alpha + \nu q^{\vt}}{1 + \alpha} \frac{1 + \alpha q^{\vt+1}}{1 + \alpha q} + \frac{1 - \nu q^\vt}{1 + \alpha} \frac{\alpha q (1-  q^{\vt + 1})}{1 + \alpha q}\Big) + \frac{q}{1 + q} \Big(\frac{\alpha(1 - q^{\vt})}{1 + \alpha} \frac{1 - \nu q^{\vt-1}}{1 + \alpha q} + \frac{1 + \alpha q^\vt}{1 + \alpha} \frac{\alpha q + \nu q^{\vt}}{1 + \alpha q}\Big)
\end{align*}
and the RHS equals 
\begin{align*}
&\widetilde{\Lambda}\big(1; (1, 0)\big) \Big(L^{(1)}_{\alpha q}(\vt, 1; \vt, 1) L^{(1)}_{\alpha}(\vt, 0; \vt, 0) + L^{(1)}_{\alpha q}(\vt, 1; \vt+1, 0) L^{(1)}_{\alpha }(\vt+1, 0; \vt, 1) \Big) \\ 
&+ \widetilde{\Lambda}\big(1; (0, 1)\big) \Big(L^{(1)}_{\alpha q}(\vt, 0; \vt-1, 1) L^{(1)}_{\alpha}(\vt-1, 1; \vt, 0) + L^{(1)}_{\alpha q}(\vt, 0; \vt, 0) L^{(1)}_{\alpha}(\vt, 1; \vt, 1)\Big) \\
\numberthis\label{eq:temp26}
&= \frac{q}{1 + q} \Big(\frac{\alpha q + \nu q^{\vt}}{1 + \alpha q} \frac{1 + \alpha q^{\vt}}{1 + \alpha } + \frac{1 - \nu q^\vt}{1 + \alpha q} \frac{\alpha  (1-  q^{\vt + 1})}{1 + \alpha }\Big) + \frac{1}{1 + q} \Big(\frac{\alpha q (1 - q^{\vt})}{1 + \alpha q} \frac{1 - \nu q^{\vt-1}}{1 + \alpha} + \frac{1 + \alpha q^{\vt+1}}{1 + \alpha q} \frac{\alpha + \nu q^{\vt}}{1 + \alpha}\Big)
\end{align*}
It is not hard to see directly that the RHS of \eqref{eq:temp25} and \eqref{eq:temp26} are both the sum of the following four terms (divided by a common denominator $(1+q)(1+\alpha) (1+\alpha q)$)
$$q(\alpha q + \nu q^{\vt})(1 + \alpha q^{\vt}),\qquad q\alpha(1-\nu q^v)(1-q^{v+1}),\quad \alpha q(1-q^\vt)(1-\nu q^{\vt-1}), \quad (1+\alpha q^{v+1})(\alpha + \nu q^\vt). $$
For the verification of other $h, h' \in \{0, 1, 2\}$, we omit the details of our computation.
\bigskip
\\
For general $J$, we look at the column of vertices on the  LHS of the equation illustrated in Figure \ref{fig:vertexreverse1}. From bottom to top, we label the vertices from $1$ to $J$.  Sequentially for $i = 1, \dots, J-1$, we apply the $J = 2$ identity (that we just verified) for the vertex $i$ and $i+1$ in that column.  Then, the spectral parameters of the vertices (looking from bottom to top) change from  $(\alpha, \alpha q, \dots, \alpha q^{J-1})$ to $(\alpha q, \alpha q^2, \dots, \alpha q^{J-1}, \alpha)$, note that the vertex with spectral parameter $\alpha$ moves from bottom to top. The $\Lambda$ also changes accordingly. Then we apply the $J=2$ identity for $i =1, \dots, J-2$ to move the spectral parameter $\alpha q$ to the second top place.  If we keep implementing this procedure, finally we get a column of vertices with spectral parameters $(\alpha q^{J-1}, \alpha q^{J-2}, \dots, \alpha)$. The left input lines are weighted by $\widetilde{\Lambda}$.
\end{proof}
\begin{remark}
It turns out that following the same argument, the identities \eqref{eq:vertexreverse}, \eqref{eq:fusionreverse} also hold when we replace the stochastic matrix $\widetilde{\Lambda}$ with 
\begin{equation*}
\Lambda_\sigma\big(h, (h_1, \dots, h_J)\big) := 
\begin{cases}
\displaystyle \frac{1}{{Z}_J (h)}\prod_{h_i = 1} q^{\sigma(i) - 1}  \qquad  &\emph{ if } h = \sum_{i=1}^J h_i,\\
0 \hspace{7.2em} &\emph{ else,}
\end{cases}
\end{equation*}
and replace $\widetilde{L}_{\alpha}^{\otimes_q J} (v, h_1, \dots, h_J; v', h'_1, \dots, h'_J)$ with
\begin{equation*}
L_{\sigma, \alpha}^{\otimes_q J} (v, h_1, \dots, h_J; v', h'_1, \dots, h'_J) :=  \sum_{\substack{v_0, v_1, \dots, v_J \\ v_0 = v, v_J = v'}}\prod_{i = 1}^J L^{(1)}_{\alpha q^{\sigma(i) - 1}} (v_{i-1}, h_i; v_{i}, h'_i).
\end{equation*}
where $\sigma$ is an arbitrary permutation of $\{1, 2, \dots, J\}$. We do not include this generalization in the lemma since we are not going to use it.
\end{remark}
\begin{theorem}\label{thm:fourpoint}
Consider the SHS6V model associated with the height function $H$, define for $x, y \in \mathbb{Z}_{\geq 0}$,
\begin{equation}\label{eq:xi}
\xi(x+1, y+1) = q^{H(x+1, y+1)} -  \frac{\alpha + \nu}{1 + \alpha} q^{H(x, y+1)} - \frac{1+\alpha q^J}{1 + \alpha} q^{H(x+1, y)} + \frac{\nu + \alpha q^J}{1 + \alpha} q^{H(x, y)},
\end{equation}
then we have, 
\begin{equation}\label{eq:fourpoint1}
\mathbb{E}\Big[\xi(x+1, y+1) \big| \mathcal{F}(x,y)\Big] = 0,
\end{equation}
where $\mathcal{F}(x,y) = \sigma\big(H(i, j): i\leq x \text{ or } j \leq y\big)$.
\end{theorem}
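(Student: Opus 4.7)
The strategy is to reduce \eqref{eq:fourpoint1} to a purely local statement about the vertex at $(x,y)$, decompose that vertex via the reversed fusion \eqref{eq:fusionreverse} of Lemma \ref{lem:vertexid}, and then iterate the (easily verified) $J=1$ four point relation through the resulting column of slices by a telescoping backward induction.

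First, note that $\mathcal{F}(x,y)$ determines $H(x,y)$, $v_{x,y}=H(x,y)-H(x+1,y)$, and $h_{x,y}=H(x,y+1)-H(x,y)$, and that conditional on $\mathcal{F}(x,y)$ the pair $(v_{x,y+1},h_{x+1,y})$ is sampled from $L^{(J)}_\alpha(v_{x,y},h_{x,y};\cdot,\cdot)$. Dividing \eqref{eq:xi} by $q^{H(x,y)}$ and using line conservation, \eqref{eq:fourpoint1} is equivalent to the local identity
\begin{equation*}
\mathbb{E}_{L^{(J)}_\alpha}\!\bigl[q^{h'}\,\big|\,v,h\bigr]\;=\;\tfrac{\alpha+\nu}{1+\alpha}\,q^{h+v}+\tfrac{1+\alpha q^J}{1+\alpha}-\tfrac{\nu+\alpha q^J}{1+\alpha}\,q^v \qquad\text{for every }(v,h).
\end{equation*}
For $J=1$, this is a short enumeration using the four nonzero entries in Definition \ref{def:j1lmatrix}, split into the cases $h=0$ and $h=1$; the same check goes through verbatim with $\alpha$ replaced by any spectral parameter $\beta$.

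For general $J$, apply \eqref{eq:fusionreverse}: the output $(v_{x,y+1},h_{x+1,y})$ has the same law as drawing $(h_1,\dots,h_J)\sim\widetilde{\Lambda}(h;\cdot)$ and then passing a vertical line through $J$ successive $L^{(1)}$-vertices with spectral parameters $\beta_i:=\alpha q^{J-i}$ (from bottom, $i=1$, to top, $i=J$), producing intermediate verticals $v_0=v,v_1,\dots,v_J$ and horizontals $h'_1,\dots,h'_J$ with $h'=\sum_{i=1}^J h'_i$. I prove by backward induction on $i=J,\dots,1$ that
\begin{equation*}
F_i(v):=\mathbb{E}\!\bigl[q^{h'_i+\cdots+h'_J}\,\big|\,v_{i-1}=v,\,h_i,\dots,h_J\bigr]=\tfrac{\alpha+\nu}{1+\alpha}\,q^{h_i+\cdots+h_J+v}+\tfrac{1+\alpha q^{J-i+1}}{1+\alpha}-\tfrac{\nu+\alpha q^{J-i+1}}{1+\alpha}\,q^v.
\end{equation*}
The base case $i=J$ is precisely the $J=1$ identity at $\beta=\alpha$. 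For the inductive step, I write $F_i(v)=\mathbb{E}[q^{h'_i}F_{i+1}(v+h_i-h'_i)\,|\,v,h_i]$, expand using the inductive hypothesis, and substitute the $J=1$ relation at $\beta_i$; the would-be cross term in $q^{h_i+v}$ has coefficient $\tfrac{1+\alpha q^{J-i}}{1+\alpha}\cdot\tfrac{\beta_i+\nu}{1+\beta_i}-\tfrac{\nu+\alpha q^{J-i}}{1+\alpha}$, which vanishes since $\beta_i=\alpha q^{J-i}$, while the constant and $q^v$ terms telescope to $\tfrac{1+\alpha q^{J-i+1}}{1+\alpha}$ and $\tfrac{\nu+\alpha q^{J-i+1}}{1+\alpha}q^v$ respectively. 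Setting $i=1$, $F_1(v_0)$ depends on $(h_1,\dots,h_J)$ only through their sum $h$, so averaging over $\widetilde{\Lambda}(h;\cdot)$ yields the displayed local identity and hence \eqref{eq:fourpoint1}.

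The main obstacle is recognizing that the reversed fusion is essential for this argument: running the same backward induction using Lemma \ref{lem:fusion}, whose spectral parameters $\alpha q^{i-1}$ increase upward, produces a residual cross coefficient that no longer simplifies. Lemma \ref{lem:vertexid} thus supplies exactly the spectral arrangement required for the telescoping to close.
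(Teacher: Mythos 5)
Your proof is correct and takes essentially the same approach as the paper: reduce to a local vertex identity, verify it directly for $J=1$ with generic spectral parameter, then decompose the general $J$ vertex via the reversed fusion of Lemma \ref{lem:vertexid} and iterate the $J=1$ identity through the column so that a telescoping cancellation closes the argument. The only cosmetic difference is bookkeeping: the paper propagates expectations of $q^{H_i'} - \tfrac{\nu+\alpha q^{J-i}}{1+\alpha q^{J-i}}q^{H_i}$ with a product of prefactors $\tfrac{1+\alpha q^{J+1-i}}{1+\alpha q^{J-i}}$ that collapse telescopically, whereas you normalize by $q^{H'_{i-1}}$ and track the conditional expectation $F_i(v)$ in a closed form via backward induction, observing directly that the cross term in $q^{h_i+v}$ vanishes precisely because $\beta_i=\alpha q^{J-i}$; these are the same computation in slightly different variables. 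Your concluding remark that the reversed ordering of spectral parameters is what makes the cross coefficient vanish (and that the ordering of Lemma \ref{lem:fusion} would not) is accurate and matches the motivation the paper gives for introducing Lemma \ref{lem:vertexid}.
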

\begin{proof}
Since our model is homogeneous, i.e. every vertex is assigned with the same $\mathbb{L}$-matrix, we suppress the dependence on $x, y$ in our notation and denote by 
\begin{equation*}
\xi:= \xi(x+1, y+1), \qquad \HH := H(x, y),\qquad h := H(x, y+1) - H(x, y), \qquad  \vt := H(x, y) - H(x+1, y).
\end{equation*}  
In addition, we let
\begin{equation*}
\FF:= \sigma\big(H(x, y), H(x, y+1), H(x+1, y)\big) = \sigma\big(\mathsf{H}, h, \vt\big).
\end{equation*}
By the sequential update rule specified in Definition \ref{def:SHS6V}, $H(x+1, y+1)$ only depends on the information of $\mathsf{H}, h, \vt$, so 
\begin{equation*}
\mathbb{E}\Big[\xi \big| \mathcal{F}(x,y) \Big] = \mathbb{E}\Big[\xi \big| \mathcal{F}\Big].
\end{equation*}
To prove \eqref{eq:fourpoint1}, it suffices to show that 
\begin{align}\label{eq:fourpoint}
\mathbb{E}\Big[\xi \big| \mathcal{F}\Big] = 0.
\end{align}
We prove this identity in two steps:
\bigskip
\\
\textbf{Step 1} ($J=1$):
We assume $J = 1$, in which case the vertex weight \eqref{eq:higherspinLmatrix} reduces to the weights in Definition \ref{def:j1lmatrix}. Let us verify \eqref{eq:fourpoint} directly,
\begin{align*}
\mathbb{E}\Big[\xi \big| \mathcal{F}\Big] &= \mathbb{E}\Big[q^{H(x+1, y+1)} -  \frac{\alpha + \nu}{1 + \alpha} q^{H(x, y+1)} - \frac{1+\alpha q}{1 + \alpha} q^{H(x+1, y)} + \frac{\nu + \alpha q}{1 + \alpha} q^{H(x, y)} \big| \mathcal{F}\Big],
\\
&= \mathbb{E}\Big[q^{H(x+1, y+1)} \big| \mathcal{F}\Big] - \frac{\alpha + \nu}{1 + \alpha} q^{H(x, y+1)} - \frac{1+\alpha q}{1 + \alpha} q^{H(x+1, y)} + \frac{\nu + \alpha q}{1 + \alpha} q^{H(x, y)},\\
&= \mathbb{E}\Big[q^{H(x+1, y+1)}\big| \mathcal{F}\Big] - \frac{\alpha + \nu}{1 + \alpha} q^{\HH + h} - \frac{1 + \alpha q}{1 + \alpha} q^{\HH - \vt} + \frac{\nu+\alpha q}{1+ \alpha} q^{\HH}.
\end{align*}
Since $J =1 $, $h$ is either $0$ or $1$, we discuss them respectively.
\bigskip
\\
If $h = 0$, i.e. $H(x, y+1) = \HH$, by Definition \ref{def:j1lmatrix},
\begin{equation}\label{eq:distributeone}
\mathbb{P}\Big(H(x+1, y+1) = \HH-\vt\Big) = \frac{1 + \alpha q^\vt}{1 + \alpha}; \quad \mathbb{P}\Big(H(x+1, y+1) = \HH- \vt + 1\Big) = \frac{\alpha(1-q^\vt)}{1 + \alpha}.
\end{equation}
Hence,
\begin{align*}
\mathbb{E}\Big[\xi \big| \mathcal{F}\Big] = \frac{1 + \alpha q^\vt}{1 + \alpha} q^{\HH-\vt} + \frac{\alpha(1-q^\vt)}{1+\alpha} q^{\HH-\vt+1} - \frac{\alpha+\nu}{1+\alpha} q^\HH- \frac{1+\alpha q}{1+\alpha } q^{\HH-\vt} + \frac{\nu + \alpha q}{1+\alpha} q^\HH= 0.
\end{align*}
If $h = 1$, i.e. $H(x, y+1) = \HH+ 1$, we have
\begin{equation}\label{eq:distributetwo}
\mathbb{P}\Big(H(x+1, y+1) = \HH-\vt \Big) = \frac{1 - \nu q^\vt}{1 + \alpha}; \quad \mathbb{P}\Big(H(x+1, y+1) = \HH-\vt + 1 \Big) = \frac{\alpha + \nu q^\vt}{1 + \alpha},
\end{equation}
which yields
\begin{align*}
&\mathbb{E}\Big[\xi \big| \mathcal{F}\Big] = 
\frac{1-\nu q^\vt}{1 + \alpha} q^{\HH-\vt} + \frac{\alpha + \nu q^\vt}{1+\alpha} q^{\HH-\vt+1} - \frac{\alpha + \nu}{1 + \alpha} q^{\HH+1} - \frac{1 + \alpha q}{1 + \alpha} q^{\HH-\vt}  + \frac{\nu + \alpha q}{1+\alpha} q^{\HH} = 0.
\end{align*}
\textbf{Step 2} (General $J$): Using fusion, we decompose the general $J$ vertex with input $(v, h)$ into a column of $J =1$ vertices with input $(v, h_1, \dots, h_J)$, where $(h_1, \dots h_J)$ is weighted by $\Lambda(h; h_1, \dots, h_J)$, see Figure \ref{fig:fusion}. Define  
$H_i, H'_i, i = 0, 1, \dots, J$ in the way that  
\begin{align}
\label{eq:relation1}
&H_0 = H(x, y), \qquad H'_0 = H(x+1, y), 
\\
\label{eq:relation2}
&H_i = H_0 + \sum_{j=1}^i h_j, \qquad H'_i =  H'_0 + \sum_{j=1}^i h'_j.
\end{align}
Since $h  = h_1 + \dots + h_J$,  $H_J = H(x, y+1)$. Furthermore,  
$H_J' = H(x+1, y+1)$ in law. 
\begin{figure}[ht]
	\centering
	\includegraphics[scale = 1.5]{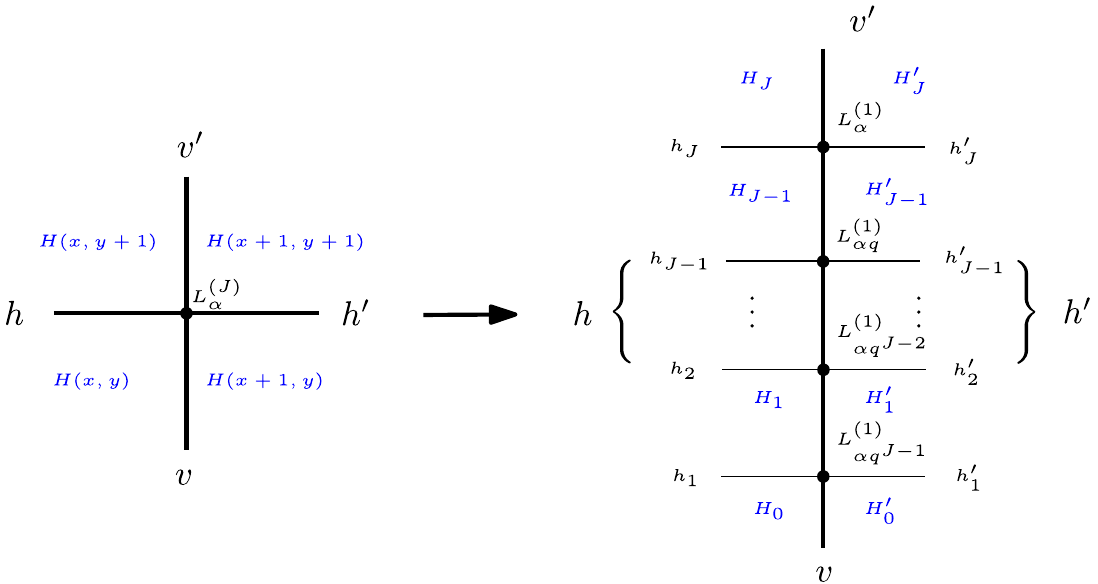}
	\caption{Given $H(x, y) = H_0, H(x+1, y) = H'_0,  H(x, y+1) = H_J$. By fusion (the spectral parameters have been reversed upside down thanks to Lemma \ref{lem:vertexid}), we have the distributional identity $H(x+1, y+1) = H_J'$. The advantage of utilizing fusion is that we can apply $J = 1$ version of \eqref{eq:fourpoint1} to each vertex in the column, where the heights around the $i$-th vertex are $H_{i-1}, H'_{i-1}, H_{i}, H'_{i}$. The horizontal input $(h_1, \dots, h_J)$ is weighted by $\widetilde{\Lambda}(h; h_1, \dots, h_J)$.}
	\label{fig:fusion}
\end{figure}
It suffices to prove 
\begin{align*}
\mathbb{E}\Big[q^{H_J'} - \frac{\alpha + \nu}{1 + \alpha} q^{H_J} - \frac{1 + \alpha q^J}{1 + \alpha} q^{H_0'} + \frac{\nu+\alpha q^J}{1+ \alpha} q^{H_0} \big| \FF\Big] = 0
\end{align*}
This is equivalent to
\begin{align}\label{eq:temp3}
\mathbb{E}\Big[ q^{H_J'} - \frac{\alpha + \nu}{1 + \alpha} q^{H_J} \big| \FF\Big] = \mathbb{E}\Big[\frac{1 + \alpha q^J}{1 + \alpha} q^{H_0'} - \frac{\nu+\alpha q^J}{1 + \alpha} q^{H_0} \big| \FF\Big].
\end{align} 
We define the sigma algebra $\mathcal{F}_{i} = \sigma\big(H_i, H_i', H_{i+1}\big)$ for $i = 0, 1, \dots, J-1$.
Since all the vertices are of horizontal spin $1/2$ now, using the $J = 1$ version of \eqref{eq:fourpoint1} (proved in \textbf{Step 1}) for the $i$-th vertex (with the spectral parameter $\alpha q^{J-i}$) looking from the bottom, we have 
\begin{align*}
&\mathbb{E} \Big[q^{H'_i} - \frac{\nu + \alpha q^{J-i}}{1 + \alpha q^{J-i}} q^{H_i} - \frac{1+\alpha q^{J+1-i}}{1 + \alpha q^{J-i}} q^{H'_{i-1}} + \frac{\nu + \alpha q^{J+1-i}}{1 + \alpha q^{J-i}} q^{H_{i-1}} \big| \FF\Big]\\
&= \mathbb{E} \bigg[ \mathbb{E}\Big[q^{H'_i} - \frac{\nu + \alpha q^{J-i}}{1 + \alpha q^{J-i}} q^{H_i} - \frac{1+\alpha q^{J+1-i}}{1 + \alpha q^{J-i}} q^{H'_{i-1}} + \frac{\nu + \alpha q^{J+1-i}}{1 + \alpha q^{J-i}} q^{H_{i-1}} \big| \mathcal{F}_{i-1}\Big]  \Big| \FF\bigg]
= 0.
\end{align*}
In other words, 
\begin{equation*}
\mathbb{E}\Big[q^{H'_i} - \frac{\nu + \alpha q^{J-i}}{1 + \alpha q^{J-i}} q^{H_i} \big| \FF\Big] = \frac{1 + \alpha q^{J+1-i}}{1 + \alpha q^{J-i}} \mathbb{E}\Big[q^{H'_{i-1}} - \frac{\nu + \alpha q^{J+1-i}}{1 + \alpha q^{J+1-i}} q^{H_{i-1}} \big| \FF\Big].
\end{equation*} 
Iterating the above equation from $i = J$ to $i =1$, one concludes the desired \eqref{eq:temp3}.
\end{proof}
\noindent To prove relation \eqref{eq:introfourpointquad1}, we need the following fact which says that under our scaling \eqref{eq:scaling}, it is unlikely that a vertex will change the direction of lines entering into it. More specifically, if a vertex has $i$ vertical input lines and $j$ horizontal input lines, with probability going to $1$, it produces $i$ vertical and $j$ horizontal output lines.
\bigskip
\\
We use $\mathcal{O}(a)$ to denote some quantity bounded by a constant times $a$, when the scaling parameter $L$ is large.
\begin{lemma}\label{lem:scalingmatrix}
	For any fixed $i_1, i_2 \in \{0, 1, \dots, I\}$ and $j_1, j_2 \in \{0,1, \dots , J\}$, as $L \to \infty$
	\begin{equation*}
	L_{\alpha}^{(J)}(i_1, j_1; i_2, j_2) = \mathbf{1}_{\{i_1 = i_2, j_1 = j_2\}} + \mathcal{O}(L^{-1}).
	\end{equation*}
\end{lemma}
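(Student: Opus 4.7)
The plan is to reduce the general $J$ case to the $J=1$ case via the fusion decomposition of Lemma \ref{lem:fusion}. First I would unpack the scaling \eqref{eq:scaling}. Expanding, $q = 1 + (\beta_1 - \beta_2)/L + O(L^{-2})$, so $q^m = 1 + O(L^{-1})$ and $\nu q^m = q^{m-I} = 1 + O(L^{-1})$ uniformly in $m \in \{0,1,\ldots,I\}$. Solving $(1 + \alpha q^J)/(1 + \alpha) = e^{-J\beta_2/L}$ to leading order gives $\alpha \to -\beta_2/\beta_1$ as $L \to \infty$; in particular $\alpha$ stays bounded and bounded away from $-1$, $0$, and $-q^{-I-J+1}$, so all denominators are controlled and Corollary \ref{cor:stochasticj} applies.

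Next I would verify the claim directly for $J = 1$ by inspecting the four nontrivial entries in Definition \ref{def:j1lmatrix}. The state-preserving entries $(1 + \alpha q^m)/(1+\alpha)$ and $(\alpha + \nu q^m)/(1+\alpha)$ both equal $1 + O(L^{-1})$, while the state-changing entries $\alpha(1 - q^m)/(1+\alpha)$ and $(1 - \nu q^m)/(1+\alpha) = (1 - q^{m-I})/(1+\alpha)$ are both $O(L^{-1})$, since $1 - q^m$ and $1 - q^{m-I}$ are each $O(L^{-1})$ on the finite state space. Hence $L_{\alpha}^{(1)}(i_1, j_1; i_2, j_2) = \mathbf{1}_{\{i_1 = i_2, j_1 = j_2\}} + \mathcal{O}(L^{-1})$.

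For general $J$, I would apply Lemma \ref{lem:fusion} and write
\[
L_{\alpha}^{(J)}(i_1, j_1; i_2, j_2) = \sum \Lambda(j_1; h_1,\ldots,h_J)\,\Xi(h_1',\ldots,h_J'; j_2) \prod_{i=1}^J L^{(1)}_{\alpha q^{i-1}}(v_{i-1}, h_i; v_i, h_i'),
\]
with the sum ranging over $(h_i), (h_i') \in \{0,1\}^J$ and $v_0 = i_1, v_1, \ldots, v_J = i_2$. Since $\alpha q^{i-1} \to -\beta_2/\beta_1$ for each $i$, the $J = 1$ estimate of the previous paragraph applies uniformly to every factor $L^{(1)}_{\alpha q^{i-1}}$. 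If $(i_1, j_1) \neq (i_2, j_2)$ then no term in this finite sum can be state-preserving throughout the column (for otherwise $v_0 = v_J$ and $h_i = h_i'$ for all $i$, forcing $(i_1, j_1) = (i_2, j_2)$), so each surviving product contains at least one state-changing factor of size $O(L^{-1})$; since $\Lambda, \Xi$ are bounded by $1$ and the number of summands is bounded uniformly in $L$, this gives $L_{\alpha}^{(J)}(i_1, j_1; i_2, j_2) = O(L^{-1})$.

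The diagonal case $(i_1, j_1) = (i_2, j_2)$ then follows for free from stochasticity (Corollary \ref{cor:stochasticj}): the row sums to $1$ and its off-diagonal portion is $O(L^{-1})$, so the diagonal entry equals $1 + O(L^{-1})$. There is no real obstacle here beyond ensuring uniform control of the $O(L^{-1})$ constants; the finiteness of the state space $\{0,1,\ldots,I\}\times\{0,1,\ldots,J\}$ and of the fusion sum make this automatic once the $J=1$ computation is in hand.
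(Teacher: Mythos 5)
Your proof is correct and follows essentially the same route as the paper: verify the estimate directly for the four nontrivial entries of $L^{(1)}_{\alpha q^{i}}$ and then lift to general $J$ via the fusion decomposition of Lemma \ref{lem:fusion}. The paper states the reduction step more tersely (simply asserting that it suffices to establish the claim for each $L^{(1)}_{\alpha q^i}$), whereas you spell out why off-diagonal products must each contain a state-changing factor and recover the diagonal entry from stochasticity; this is a fully adequate and arguably cleaner way to close the reduction.
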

\begin{proof}
Via Lemma \ref{lem:fusion}, it suffices to show that for every $i \in \{0, 1, \dots, J-1\}$ 
\begin{equation}\label{eq:claim}
L^{(1)}_{\alpha q^i}(i_1, j_1; i_2, j_2) = \mathbf{1}_{\{i_1 = i_2, j_1 = j_2\}} + \mathcal{O}(L^{-1}).
\end{equation}
Indeed, by direct computation,  under our scaling,
\begin{align*}
&L^{(1)}_{\alpha q^i} (m, 0; m, 0) = \frac{1 + \alpha q^{m+i}}{1 + \alpha q^i} = 1 - \frac{\beta_2 m}{L} + \mathcal{O}(L^{-2}), \hspace{3em}  L^{(1)}_{\alpha q^i} (m, 0; m-1, 1) = \frac{\beta_2 m}{L} + \mathcal{O}(L^{-2}),\\
&L^{(1)}_{\alpha q^i} (m, 1; m, 1) = \frac{\alpha q^i + \nu q^m}{1 + \alpha q^i} = 1 + \frac{\beta_1 (m-I)}{L} + \mathcal{O}(L^{-2}), \hspace{1em} L^{(1)}_{\alpha q^i} (m, 1; m+1, 0) = \frac{\beta_1 (I-m)}{L} + \mathcal{O}(L^{-2}),
\end{align*}
which implies \eqref{eq:claim}.
\end{proof}
\begin{theorem}\label{thm:fourpointquad}
Define 
\begin{equation*}
\Delta_x := q^{H(x+1, y)} - q^{H(x, y)},\qquad \Delta_y := q^{H(x, y+1)} - q^{H(x, y)}.
\end{equation*}
Fix $A, B > 0$, under scaling \eqref{eq:scaling}, for any $x \in [0, LA] \cap \mathbb{Z}$ and $y \in [0, LB] \cap \mathbb{Z}$ and $L > 1$,
\begin{align*}
&\mathbb{E}\Big[\xi(x+1, y+1)^2 \big| \mathcal{F}(x,y)\Big] \\
&= L^{-1} (\beta_1 + \beta_2) \Delta_x \Delta_y + J L^{-2} (\beta_2 - \beta_1) \beta_2 q^{H(x, y)} \Delta_x + I L^{-2} (\beta_1 - \beta_2) \beta_1 q^{H(x, y)} \Delta_y + \mathbf{R}(x, y),
\end{align*}
where $\mathbf{R}(x, y)$ is a random field with the uniform upper bound 
\begin{equation}\label{eq:remainderbound}
|\mathbf{R}(x, y)| \leq C L^{-4}, 
\end{equation}
for all $x \in [0, LA] \cap \mathbb{Z}$ and $y \in [0, LB] \cap \mathbb{Z}$, $C$ is some constant that only depends on $A, B$.
\end{theorem}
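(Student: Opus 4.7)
The plan is to mirror the two-step structure used in the proof of Theorem \ref{thm:fourpoint}: first establish the identity for $J = 1$ by direct case analysis, then lift to general $J$ via the reversed fusion of Lemma \ref{lem:vertexid}. For the $J = 1$ case, I would condition on $\FF = \sigma(\HH, \hr, \vt)$ with $\HH = H(x,y)$, $\hr = H(x,y+1) - H(x,y)$, $\vt = H(x,y) - H(x+1,y)$, and split into the two sub-cases $\hr \in \{0, 1\}$. In each sub-case the law of $H(x+1, y+1)$ is given by \eqref{eq:distributeone}--\eqref{eq:distributetwo}, so $\xi$ takes one of two explicit values and $\mathbb{E}[\xi^2 \mid \FF]$ becomes a rational expression in $\alpha, \nu, q, q^{\HH}, q^{\vt}$. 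Under the scaling \eqref{eq:scaling}, a first-order Taylor expansion in $L^{-1}$ produces, at leading order $L^{-3}$, exactly $L^{-1}(\beta_1+\beta_2)\Delta_x\Delta_y + L^{-2}(\beta_2-\beta_1)\beta_2 q^{\HH}\Delta_x + IL^{-2}(\beta_1-\beta_2)\beta_1 q^{\HH}\Delta_y$, with a uniform $O(L^{-4})$ remainder for $\HH, \vt$ in a bounded range; this is the $J = 1$ instance of the theorem.

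For general $J$, I apply Lemma \ref{lem:vertexid} to realize $H(x+1,y+1)$ in distribution as $H_J'$ at the top of a column of $J$ spin-$1/2$ vertices with spectral parameters $\alpha q^{J-1}, \ldots, \alpha q^0$ from bottom to top (Figure \ref{fig:fusion}). Let
\[
\xi_i := q^{H_i'} - \tfrac{\nu+\alpha q^{J-i}}{1+\alpha q^{J-i}} q^{H_i} - \tfrac{1+\alpha q^{J+1-i}}{1+\alpha q^{J-i}} q^{H'_{i-1}} + \tfrac{\nu+\alpha q^{J+1-i}}{1+\alpha q^{J-i}} q^{H_{i-1}}
\]
be the level-$i$ four-point sum. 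Writing $A_i := q^{H_i'} - \tfrac{\nu+\alpha q^{J-i}}{1+\alpha q^{J-i}} q^{H_i}$ one has $\xi_i = A_i - \tfrac{1+\alpha q^{J+1-i}}{1+\alpha q^{J-i}} A_{i-1}$, and the very telescoping already used in the proof of Theorem \ref{thm:fourpoint} (iterated in reverse) gives the key decomposition
\[
\xi \;=\; \sum_{i=1}^J c_i\,\xi_i, \qquad c_i \;:=\; \frac{1+\alpha q^{J-i}}{1+\alpha}.
\]
Expand $\xi^2 = \sum_{i,j} c_i c_j \xi_i \xi_j$ and introduce the nested filtration $\mathcal{G}_k := \sigma(\FF, h_1, \ldots, h_J, h_1', \ldots, h_k')$. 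The $J = 1$ Theorem \ref{thm:fourpoint} applied to vertex $j$ gives $\mathbb{E}[\xi_j \mid \mathcal{G}_{j-1}] = 0$, and since $\xi_i$ is $\mathcal{G}_{j-1}$-measurable for $i < j$ all cross terms vanish under $\mathbb{E}[\,\cdot \mid \FF]$. Combined with $c_i = 1 + \mathcal{O}(L^{-1})$ and $\mathbb{E}[\xi_i^2 \mid \FF] = \mathcal{O}(L^{-3})$ from the $J = 1$ case, this reduces the task to $\mathbb{E}[\xi^2 \mid \FF] = \sum_i \mathbb{E}[\xi_i^2 \mid \FF] + \mathcal{O}(L^{-4})$.

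The remaining step is to sum the $J = 1$ second-moment formula over $i$, where Lemma \ref{lem:scalingmatrix} is crucial: on the event that every spin-$1/2$ vertex in the column preserves its input directions -- which has conditional probability $1 - \mathcal{O}(L^{-1})$ -- the intermediate heights satisfy $|H_{i-1} - H_0|, |H'_{i-1} - H'_0| \leq J$ deterministically, hence $q^{H_{i-1}} = q^{\HH}(1 + \mathcal{O}(L^{-1}))$. Using $\sum_i h_i = \hr$ and $\sum_i \Delta_y^{(i)} = \Delta_y$, Taylor expansion in $L^{-1}$ then yields $\sum_i q^{H_{i-1}}\Delta_x^{(i)} = J q^{\HH}\Delta_x + \mathcal{O}(L^{-2})$ (the factor $J$ coming from the $J$ approximately equal summands), $\sum_i q^{H_{i-1}}\Delta_y^{(i)} = q^{\HH}\Delta_y + \mathcal{O}(L^{-2})$ (the factor $I$ being already built into each $J = 1$ term), and $\sum_i \Delta_x^{(i)}\Delta_y^{(i)} = \Delta_x\Delta_y + \mathcal{O}(L^{-3})$, while the $\mathcal{O}(L^{-1})$-probability bad event contributes at most $\mathcal{O}(L^{-4})$ after the $L^{-2}$ prefactor. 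I expect the main obstacle to be precisely this last bookkeeping step: carefully propagating the $L^{-1}$ Taylor errors through each of the three partial sums and the good/bad-event decomposition so that all residuals genuinely fit into the uniform $CL^{-4}$ bound, and verifying that the $J$ and $I$ coefficients arise in exactly the predicted places -- the $J$ from column summation of the $\Delta_x$ contribution, the $I$ from inside each $J = 1$ formula's $\Delta_y$ coefficient.
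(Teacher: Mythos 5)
Your proposal follows the paper's proof step for step: the direct case analysis for $J=1$ followed by Taylor expansion, the reversed fusion via Lemma \ref{lem:vertexid} producing the telescoping decomposition $\xi = \sum_i c_i \xi_i$ with $c_i = \frac{1+\alpha q^{J-i}}{1+\alpha}$, the martingale orthogonality killing cross terms, and the final bookkeeping with Lemma \ref{lem:scalingmatrix} controlling the replacement of $\Delta_x^{(i)}$ by $\Delta_x$. Your error accounting (the $\mathcal{O}(L^{-1})$-probability bad event contributing $\mathcal{O}(L^{-4})$ after the prefactors, and $q^{H_{i-1}} = q^{\mathsf{H}}(1+\mathcal{O}(L^{-1}))$) matches the paper's reasoning, so this is correct and essentially the same proof.
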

\begin{proof}
We only need to show that the random field $\mathbf{R}(x, y)$ defined via
\begin{equation*}
\mathbf{R}(x, y) = \mathbb{E}\Big[\xi(x+1, y+1)^2 \big| \mathcal{F}(x,y)\Big] - L^{-1} (\beta_1 + \beta_2) \Delta_x \Delta_y - J L^{-2} (\beta_2 - \beta_1) \beta_2 q^{H(x, y)} \Delta_x - I L^{-2} (\beta_1 - \beta_2) \beta_1 q^{H(x, y)} \Delta_y
\end{equation*}
satisfies \eqref{eq:remainderbound}. Using same notation as in the proof of Theorem \ref{thm:fourpoint},
\begin{equation*}
\xi:= \xi(x+1, y+1), \qquad  \qquad \mathcal{F}:=  \sigma\big(\mathsf{H}, h, v\big),
\end{equation*} 
and
\begin{equation*}
\HH := H(x, y),\qquad h := H(x, y+1) - H(x, y), \qquad  v := H(x, y) - H(x+1, y).
\end{equation*}
It is clear that 
$
\mathbb{E}\big[\xi(x+1, y+1)^2 | \mathcal{F}(x,y)\big]  = \mathbb{E}\big[\xi^2 | \mathcal{F} \big].
$
Our proof is divided into two steps.
\bigskip
\\
\textbf{Step 1} $(J=1)$: When $J=1$, $h \in \{0, 1\}$. We discuss the $h = 0$ and $h = 1$ case separately.
\bigskip
\\
If $h = 0$, 
\begin{align*}
\mathbb{E}\Big[\xi^2 \big| \FF \Big]
&= \mathbb{E}\Big[\Big(q^{H(x+1, y+1)} - \frac{\alpha + \nu}{1 + \alpha} q^{\HH} - \frac{1 + \alpha q}{1 + \alpha} q^{\HH- \vt} + \frac{\nu+\alpha q}{1+ \alpha} q^{\HH}\Big)^2 \big| \FF \Big]\
\end{align*}
Referring to \eqref{eq:distributeone}, we have (recall $\nu = q^{-I}$)
\begin{align*}
\mathbb{E}\Big[\xi^2 \big| \FF\Big]  &= \frac{1 + \alpha q^\vt}{1 + \alpha} \Big(q^{\HH-\vt} - \frac{\alpha + \nu}{1 + \alpha} q^{\HH} - \frac{1 + \alpha q}{1 + \alpha} q^{\HH-\vt} + \frac{\nu + \alpha q}{1 + \alpha} q^\HH \Big)^2\\ 
&\quad  + \frac{\alpha (1-q^\vt)}{1 + \alpha} \Big(q^{\HH-\vt+1} - \frac{\alpha + \nu}{1 + \alpha} q^{\HH} -  \frac{1 + \alpha q}{1 + \alpha} q^{\HH - v} + \frac{\nu + \alpha q}{1 + \alpha} q^{\HH} \Big)^2\\
\numberthis \label{eq:temp4} 
&=\frac{\alpha (q-1)^2 q^{-2 \vt} (1 - q^\vt) (1 + \alpha q^\vt)}{(1 + \alpha)^2} q^{2\mathsf{H}},
\end{align*}
The second equality in the above display follows from a straightforward calculation. 
\bigskip
\\
Let $\b = \frac{1 + \alpha q}{1 + \alpha}$ and rewrite \eqref{eq:temp4} as 
\begin{equation}\label{eq:temp15}
\mathbb{E}\Big[\xi^2 \big| \mathcal{F}\Big] = (1-\b) q^{-2\vt} (q^\vt -1) \big(-\b + q + (\b-1) q^\vt \big) q^{2\mathsf{H}}
\end{equation}
Referring to scaling \eqref{eq:scaling}, we see that $q^{\HH} = e^{\frac{\beta_1 - \beta_2}{L} \HH}$ is bounded, since for $x \in [0, LA]$ and $y \in [0, LB]$ $|\HH| = |H(x, y)| 
\leq L(A+B). $
In addition,
\begin{equation}\label{eq:temp1}
q =  1+ \frac{\beta_1- \beta_2}{L} + \mathcal{O}(L^{-2}), \qquad \b = 1 - \frac{\beta_2}{L}   + \mathcal{O}(L^{-2})
\end{equation}
Using the expansion of $q$ and $\b$ in \eqref{eq:temp1}, we have
\begin{align}\label{eq:temp5}
\mathbb{E}\Big[\xi^2 \big| \FF\Big] = q^{2\HH}\beta_2 (\beta_2 - \beta_1)^2 \vt L^{-3} + \mathcal{O}(L^{-4}). 
\end{align}
When $h = 0$, $\Delta_y = q^{H(x, y+1)} - q^{H(x, y)} = 0$. Under scaling \eqref{eq:scaling}, 
\begin{equation*}
\Delta_x = q^{H(x+1, y)} - q^{H(x, y)} = q^{\HH} (q^{-\vt} - 1) = q^{\HH} \frac{\vt (\beta_2 - \beta_1)}{L} + \mathcal{O}(L^{-2}). 
\end{equation*}
Thereby,
\begin{align*}
&L^{-1} (\beta_1 + \beta_2) \Delta_x \Delta_y + L^{-2} (\beta_2 - \beta_1) \beta_2 q^{\HH} \Delta_x + I L^{-2} (\beta_1 - \beta_2) \beta_1 q^{\HH} \Delta_y\\
&= L^{-2} (\beta_2 - \beta_1) \beta_2 q^{\HH} \Delta_x,\\
\numberthis \label{eq:temp6}
&= q^{2\HH}  \beta_2 (\beta_2 - \beta_1)^2 \vt L^{-3} + \mathcal{O}(L^{-4}).
\end{align*}
It follows from \eqref{eq:temp5} and \eqref{eq:temp6} (note that $J  = 1$)
\begin{align*}
\mathbf{R}(x, y) = \mathbb{E}\Big[\xi^2 \big| \FF\Big] -  \Big(L^{-1} (\beta_1 + \beta_2) \Delta_x \Delta_y + L^{-2} (\beta_2 - \beta_1) \beta_2 q^{\HH} \Delta_x + I L^{-2} (\beta_1 - \beta_2) \beta_1 q^{\HH} \Delta_y\Big) = \mathcal{O}(L^{-4}). 
\end{align*} 
If $h=1$, $H(x+1, y+1)$ is distributed as \eqref{eq:distributetwo}, then  (recall $\nu = q^{-I}$)
\begin{align*}
\mathbb{E}\Big[\xi^2 \big| \FF\Big] 
& = \frac{1 - \nu q^\vt}{1 + \alpha} \Big(q^{\HH-\vt} - \frac{\alpha + \nu}{1 + \alpha} q^{\HH+1} - \frac{1+ \alpha q}{1 + \alpha} q^{\HH-\vt} + \frac{\nu + \alpha q }{1 + \alpha} q^\HH \Big)^2, \\ 
&\quad + \frac{\alpha + \nu q^\vt}{1 + \alpha} \Big(q^{\HH+1-\vt} - \frac{\alpha + \nu}{1 + \alpha} q^{\HH+1} - \frac{1 +\alpha q}{1 + \alpha } q^{\HH-\vt}
+ \frac{\nu + \alpha q}{1 + \alpha} q^\HH \Big)^2,\\
&= \frac{(q-1)^2 q^{-2(I+\vt)}(q^I - q^\vt)(\alpha q^I + q^\vt)}{(1+\alpha)^2} q^{2\HH}
\end{align*}
Rewrite the RHS above as (recall $\b = \frac{1 + \alpha q}{1 + \alpha}$)
\begin{equation*}
\mathbb{E}\Big[\xi^2 \big| \mathcal{F}\Big] = (q-\b) q^{-2 (I + \vt)} (q^I - q^\vt) \big((-1 + \b) q^I + q^\vt (q-\b)\big) q^{2\HH}
\end{equation*}
Using the expansion in \eqref{eq:temp1}, we deduce
\begin{equation}\numberthis \label{eq:quadexpan1}
\mathbb{E} \Big[\xi^2 \big| \FF\Big] =  q^{2\HH} (I-\vt) (\beta_2 - \beta_1)^2 \beta_1 L^{-3}  + \mathcal{O}(L^{-4}).
\end{equation}
When $h = 1$,
\begin{equation*}
\Delta_x =  q^{\HH} (\beta_2 - \beta_1) \vt L^{-1} + \mathcal{O}(L^{-2})\qquad \Delta_y = q^{\HH} (\beta_1 - \beta_2) L^{-1} + \mathcal{O}(L^{-2}),
\end{equation*} 
which yields
\begin{align*}
&L^{-1} (\beta_1 + \beta_2) \Delta_x \Delta_y + L^{-2} (\beta_2 - \beta_1) \beta_2 q^{\HH} \Delta_x + I L^{-2} (\beta_1 - \beta_2) \beta_1 q^{\HH} \Delta_y\\
\numberthis \label{eq:temp7}
&= q^{2\HH} (I-\vt) (\beta_2 - \beta_1)^2 \beta_1 L^{-3} + \mathcal{O}(L^{-4}).
\end{align*}
Combining \eqref{eq:quadexpan1} and \eqref{eq:temp7} yields 
\begin{align*}
\mathbf{R}(x, y) &= \mathbb{E}\Big[\xi^2 \big| \FF\Big] - \Big( L^{-1} (\beta_1 + \beta_2) \Delta_x \Delta_y + L^{-2} (\beta_2 - \beta_1) \beta_2 q^{\HH} \Delta_x + I L^{-2} (\beta_1 - \beta_2) \beta_1 q^{\HH} \Delta_y\Big) =  \mathcal{O}(L^{-4}).
\end{align*}
This concludes \eqref{eq:remainderbound}.
\bigskip\\
\textbf{Step 2} (general $J$): Similar as what we did in Theorem \ref{thm:fourpoint}, we apply fusion (see Figure \ref{fig:fusion}). Recall $H_i$, $H_i'$ from \eqref{eq:relation1} and \eqref{eq:relation2} and define 
\begin{align*}
\xi_i &:= q^{H'_i} - \frac{\nu + \alpha q^{J-i}}{1 + \alpha q^{J-i}} q^{H_i} - \frac{1 + \alpha q^{J-i + 1}}{1 + \alpha q^{J-i}} q^{H'_{i-1}} +  \frac{\nu + \alpha q^{J+1-i}}{1 + \alpha q^{J-i}} q^{H_{i-1}},\\
&= q^{H'_i} - \frac{\nu + \alpha q^{J-i}}{1 + \alpha q^{J-i}} q^{H_i} - \frac{1 + \alpha q^{J-i+1}}{1 + \alpha q^{J-i}}\Big(q^{H'_{i-1}} - \frac{\nu + \alpha q^{J+1-i}}{1 + \alpha q^{J+1-i}} q^{H_{i-1}}\Big).
\end{align*} 
By straightforward calculation,
\begin{align*}
\sum_{i=1}^J \frac{1 + \alpha q^{J-i}}{1 + \alpha} \xi_i &= \sum_{i=1}^J \bigg( \frac{1+\alpha q^{J-i}}{1 + \alpha} \Big(q^{H'_i} - \frac{\nu + \alpha q^{J-i}}{1 + \alpha q^{J-i}} q^{H_i}\Big) - \frac{1+\alpha q^{J-i+1}}{1 + \alpha} \Big(q^{H'_{i-1}} - \frac{\nu + \alpha q^{J+1-i}}{1 + \alpha q^{J+1-i}} q^{H_{i-1}}\Big) \bigg) \\
&= q^{H_J'} - \frac{\nu + \alpha}{1 + \alpha} q^{H_J} -  \frac{1 + \alpha q^J}{1 + \alpha} \Big(q^{H'_0} - \frac{\nu + \alpha q^{J}}{1 + \alpha q^{J}} q^{H_0}\Big)
\numberthis \label{eq:temp8}
= \xi,
\end{align*}
where the second equality follows from the telescoping property of the summation.
\bigskip
\\
By Theorem \ref{thm:fourpoint},  $\xi_i$ are martingale increments, so $\mathbb{E}\big[\xi_i \xi_j | \mathcal{F}\big] = 0$ for $i \neq j$. It follows from \eqref{eq:temp8} that 
\begin{align}
\label{eq:temp9}
\mathbb{E}\Big[\xi^2 \big| \FF\Big] 
= \mathbb{E}\Big[\sum_{i=1}^J \Big(\frac{1 + \alpha q^{J-i}}{1 + \alpha} \xi_i\Big)^2 \big| \FF \Big] = \sum_{i=1}^{J} \Big(\frac{1 + \alpha q^{J-i}}{1 + \alpha}\Big)^2 \mathbb{E}\Big[\xi_i^2 \big| \FF\Big].
\end{align}
Using  the $J = 1$ version of \eqref{eq:temp15} proved in \textbf{Step 1}  for the $i$-th vertex counting from bottom (here, though the spectral parameter changes from $\alpha$ to $\alpha q^i$, it does not matter under our scaling)
\begin{align}\label{eq:temp38}
\mathbb{E}\Big[\xi_i^2 \big| \mathcal{F}_{i-1}\Big] = L^{-1} (\beta_1 + \beta_2)\Delta^i_x \Delta^i_y +  L^{-2} (\beta_2 - \beta_1) \beta_2 q^{\HH} \Delta_x^i +  I L^{-2} (\beta_1 - \beta_2) \beta_1 q^{\HH} \Delta^i_y + \mathbf{R}_i (x, y)
\end{align}
where $\Delta_x^{i} = q^{H'_{i-1}} - q^{H_{i-1}}$ and $\Delta_y^i = q^{H_i} - q^{H_{i-1}}$ and, also recall that  
$\mathcal{F}_i = \sigma\big(H_i, H_{i+1}, H'_i\big)
$. By \textbf{Step 1}, there exists constant $C$ only depending on $A, B$ such that
\begin{equation}\label{eq:temp}
\sup_{\substack{i \in \{1, \dots ,J\}\\ (x, y) \in [0,LA] \times [0, LB]}}|\mathbf{R}_i (x, y)| \leq C L^{-4}.
\end{equation}
By conditioning,
$\mathbb{E}\big[\xi^2_i \big| \FF\big] = \mathbb{E}\Big[ \mathbb{E}\big[\xi_i^2 | \mathcal{F}_{i-1}\big] \big| \mathcal{F}\Big]$ (note that here we are not using the tower property but instead the sequential update rule). Using \eqref{eq:temp9} and \eqref{eq:temp38}, we get
\begin{align*}
\mathbb{E}\Big[\xi^2 \big| \FF\Big] 
&= \sum_{i=1}^J \Big(\frac{1 + \alpha q^{J-i}}{1 +\alpha}\Big)^2 \mathbb{E}\Big[L^{-1} (\beta_1 + \beta_2)\Delta^i_x \Delta^i_y +  L^{-2} (\beta_2 - \beta_1) \beta_2 q^{\HH} \Delta_x^i\\ 
&\hspace{10em} + I L^{-2} (\beta_1 - \beta_2) \beta_1 q^{\HH} \Delta^i_y + \mathbf{R}_i (x, y) \big| \FF\Big]. 
\end{align*}
Note that under our scaling, $\lim_{L \to \infty} \frac{1 + \alpha q^{J-i}}{1 + \alpha} = 1$, along with \eqref{eq:temp},
\begin{align*}
\mathbb{E}\Big[\xi^2 \big| \FF\Big] 
\numberthis \label{eq:temp2} = \sum_{i=1}^J \mathbb{E}\Big[L^{-1} (\beta_1 + \beta_2)\Delta^i_x \Delta^i_y +  L^{-2} (\beta_2 - \beta_1) \beta_2 q^{\HH} \Delta_x^i +  I L^{-2} (\beta_1 - \beta_2) \beta_1 q^{\HH} \Delta^i_y \big| \FF\Big] + \mathcal{O}(L^{-4}).
\end{align*}
It is clear that
\[\sum_{i=1}^J \Delta^i_y = \sum_{i=1}^J \Big(q^{H_i} - q^{H_{i-1}}\Big) = q^{H_J} - q^{H_0} = \Delta_y.\]
Furthermore, by Lemma \ref{lem:scalingmatrix},
\begin{equation*}
\mathbb{P}\Big(\exists\, i \text{ such that } \Delta_x^i \neq \Delta_x \big| \FF\Big) = 1 - \mathcal{O}(L^{-1}).
\end{equation*}
Hence, we can simplify \eqref{eq:temp2} and get
\begin{align*}
\mathbb{E}\Big[\xi^2 \big| \FF\Big]
&= \sum_{i=1}^J \mathbb{E}\Big[L^{-1} (\beta_1 + \beta_2)\Delta_x \Delta^i_y +  L^{-2} (\beta_2 - \beta_1) \beta_2 q^{\HH} \Delta_x + I L^{-2} (\beta_1 - \beta_2) \beta_1 q^{\HH} \Delta^i_y \big| \FF\Big] + \mathcal{O}(L^{-4}),
\\
&= \mathbb{E}\Big[L^{-1} (\beta_1 + \beta_2) \Delta_x \Delta_y + J L^{-2} (\beta_2 - \beta_1) \beta_2 q^{\HH} \Delta_x + I L^{-2} (\beta_1 - \beta_2) \beta_1 q^{\HH} \Delta_y \big| \FF \Big] + \mathcal{O}(L^{-4}),\\
&= L^{-1}(\beta_1 + \beta_2) \Delta_x \Delta_y + J L^{-2}(\beta_2 - \beta_1) \beta_2 q^{\HH} \Delta_x + I L^{-2} (\beta_1 - \beta_2) \beta_1 q^{\HH} \Delta_y + \mathcal{O}(L^{-4}).
\end{align*}
The last line is because $\Delta_x$ and $\Delta_y$ and $\HH$ are measurable with respect to $\mathcal{F}$.
\end{proof}
\begin{remark}\label{rmk:quadratic}
The identity \eqref{eq:introfourpointquad} which holds for stochastic six vertex model no long works for the SHS6V model. For example, consider $I = 2$ and $J =1 $. For an arbitrary vertex $(x, y)$, if there exists three parameters $\gamma_1, \gamma_2, \gamma_3$ such that \eqref{eq:introfourpointquad} is true. When $h = 0$, referring to \eqref{eq:temp4}, we have
\begin{align*}
\mathbb{E}\Big[\xi(x+1, y+1)^2 \big| \mathcal{F}(x,y)\Big]
= \frac{\alpha (q-1)^2 q^{-2 \vt} (1 - q^\vt) (1 + \alpha q^\vt)}{(1 + \alpha)^2} q^{2\mathsf{H}}
\end{align*}
	Since $\Delta_y = 0$, the right hand side of \eqref{eq:introfourpointquad} reduces to
	\begin{equation*}
	\gamma_1 \Delta_x \Delta_y + \gamma_2 \Delta_x q^{\HH} + \gamma_3 \Delta_y q^{\HH} = \gamma_2 \Delta_x q^{\HH} = \gamma_2 (q^{-\vt} - 1) q^{2\HH} 
	\end{equation*}
	So for all $v \in \{0, 1, \dots, I\}$,
	\begin{equation*}
\frac{\alpha (q-1)^2 q^{- \vt} (q^{-\vt} -1) (1 + \alpha q^\vt)}{(1 + \alpha)^2} q^{2\mathsf{H}} = \gamma_2 (q^{-\vt} - 1) q^{2\HH}
	\end{equation*}
	Canceling the factor $(q^{-\vt} - 1) q^{2\HH}$ on both sides, we get
	\begin{equation*}
	\frac{\alpha (q-1)^2 q^{- \vt} (1 + \alpha q^\vt)}{(1 + \alpha)^2} = \gamma_2
	\end{equation*}
	Since $\gamma_2$ does not depend on $\vt$, so the previous equation could not hold for $\vt = 1, 2$ simultaneously.
\end{remark}
\noindent The following corollary is a direct consequence of Theorem \ref{thm:fourpointquad}.
\begin{cor}\label{cor:quadbound}
Fix $A, B > 0$, there exists constant $C$ s.t. for every $x \in [0, LA] \cap \mathbb{Z}, y \in [0, LB] \cap \mathbb{Z}$ and $L > 1$
\[\mathbb{E} \Big[\xi(x+1, y+1)^2 \big| \mathcal{F}(x,y)\Big] \leq C L^{-3}.\]
\end{cor}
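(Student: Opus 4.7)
The plan is to invoke Theorem~\ref{thm:fourpointquad} directly and show that each of the four terms on the right-hand side of the identity
\begin{align*}
\mathbb{E}\Big[\xi(x+1, y+1)^2 \big| \mathcal{F}(x,y)\Big] &= L^{-1} (\beta_1 + \beta_2) \Delta_x \Delta_y + J L^{-2} (\beta_2 - \beta_1) \beta_2 q^{H(x, y)} \Delta_x \\ &\quad + I L^{-2} (\beta_1 - \beta_2) \beta_1 q^{H(x, y)} \Delta_y + \mathbf{R}(x, y)
\end{align*}
is $\mathcal{O}(L^{-3})$ uniformly for $(x,y)$ in the prescribed box. The remainder $\mathbf{R}(x,y)$ is already controlled by Theorem~\ref{thm:fourpointquad}, which gives $|\mathbf{R}(x,y)| \leq C L^{-4}$, so this is strictly better than the target bound and contributes nothing beyond constants.

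For the remaining three terms I would first note that, because the height function changes by at most $\max(I,J)$ across any lattice edge and satisfies $|H(x,y)| \leq x + y \leq L(A+B)$ on the box, the quantity $q^{H(x,y)} = \exp\big((\beta_1 - \beta_2) H(x,y)/L\big)$ is bounded by a constant depending only on $\beta_1, \beta_2, A, B$. Next, since $H(x+1,y) - H(x,y) = -v_{x,y} \in \{-I, \ldots, 0\}$, a Taylor expansion of $q^{\,\cdot}$ under the scaling $q = e^{(\beta_1 - \beta_2)/L}$ yields
\[
|\Delta_x| = q^{H(x,y)} \big|q^{-v_{x,y}} - 1\big| \leq C L^{-1},
\]
and similarly $|\Delta_y| \leq C L^{-1}$ using $H(x,y+1) - H(x,y) = h_{x,y} \in \{0, 1, \ldots, J\}$.

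Combining these pieces, the first term is bounded by $L^{-1} \cdot \mathcal{O}(L^{-1}) \cdot \mathcal{O}(L^{-1}) = \mathcal{O}(L^{-3})$, and the two middle terms are each bounded by $L^{-2} \cdot \mathcal{O}(1) \cdot \mathcal{O}(L^{-1}) = \mathcal{O}(L^{-3})$. Adding in the $\mathcal{O}(L^{-4})$ contribution from $\mathbf{R}(x,y)$ and collecting the constants (which depend only on $A, B, I, J, \beta_1, \beta_2$) yields the claimed uniform bound. There is no real obstacle here: the corollary is essentially bookkeeping, and the only ingredients beyond Theorem~\ref{thm:fourpointquad} are the Lipschitz-type estimates $|\Delta_x|, |\Delta_y| = \mathcal{O}(L^{-1})$ and the boundedness of $q^{H(x,y)}$, both of which follow from the scaling~\eqref{eq:scaling} together with the line-conservation constraints $v_{x,y} \leq I$ and $h_{x,y} \leq J$.
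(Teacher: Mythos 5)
Your proposal is correct and follows essentially the same route as the paper: invoke Theorem~\ref{thm:fourpointquad}, bound $|\Delta_x|,|\Delta_y|$ by $CL^{-1}$ via the scaling and the fact that height increments across an edge are bounded integers, observe $q^{H(x,y)}$ is bounded on the box, and collect the resulting $\mathcal{O}(L^{-3})$ and $\mathcal{O}(L^{-4})$ pieces. The paper's proof is just a more compressed version of this same bookkeeping.
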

\begin{proof}
It is clear that there exists $C$ such that for any $x \in [0, LA] \cap \mathbb{Z}$ and $y \in [0, LB] \cap \mathbb{Z}$,
\begin{align*}
|\Delta_x| &= \Big|q^{H(x+1, y)} - q^{H(x, y)}\Big|
= q^{H(x, y)} \Big|e^{\frac{(\beta_1 - \beta_2) h }{L}} -  1\Big| \leq C L^{-1}.
\end{align*}
Similarly,  $|\Delta_y| \leq C L^{-1}$. Referring to Theorem \ref{thm:fourpointquad} (note that $q^{H(x, y)}$ is bounded), the corollary follows.
\end{proof}
\section{Proof of the main results}
\label{sec:proofofresults}
\noindent Having established the four point relation, we move on proving Theorem \ref{thm:lln} and Theorem \ref{thm:clt}. Corollary \ref{cor:clt} follows from a straightforward argument once we proved Theorem \ref{thm:clt}. For the ensuing discussion, we
will usually write $C$ for constants, we might not generally specify when irrelevant terms are being absorbed into
the constants. We might also write for example $C(n)$ when we want to specify which parameter the constant depends on.

\begin{proof}[Proof of Theorem \ref{thm:lln}]
Given Theorem \ref{thm:fourpoint}, our proof is akin to \cite[Theorem 5.1]{BG19}. We provide the detail for the sake of completeness. Recall $\mathfrak{q} = q^{\frac{1}{L}}$, to prove $\frac{1}{L} H(Lx, Ly) \to \mathbf{h}(x, y)$ uniformly in probability for $x \in [0, A]$ and $y \in [0, B]$, it suffices to show that 
$q^{H(Lx, Ly)} \to \mathfrak{q}^{\mathbf{h}(x, y)}$
uniformly in probability. To this end, we write $$q^{H(Lx, Ly)} = \mathbb{E}\Big[q^{H(Lx, Ly)}\Big] + q^{H(Lx, Ly)} - \mathbb{E}\Big[q^{H(Lx, Ly)}\Big].$$
It suffices to show that as $L \to \infty$,
\begin{enumerate}[(i).]
\label{itm:lln1}
\item $\mathbb{E}[q^{H(Lx, Ly)}] \to \mathfrak{q}^{\mathbf{h}(x, y)}$  uniformly for $(x, y) \in [0, A] \times [0, B]$, 
\label{itm:lln2}
\item $q^{H(Lx, Ly)} - \mathbb{E}[q^{H(Lx, Ly)}] \to 0$ uniformly in probability for $(x, y) \in [0, A] \times [0, B]$.  
\end{enumerate}
We first demonstrate (i). By Theorem \ref{thm:fourpoint}, 
\begin{equation*}
\mathbb{E}\Big[q^{H(x+1, y+1)}\Big] - b_1 \mathbb{E}\Big[q^{H(x, y+1)}\Big] - b_2 \mathbb{E}\Big[q^{H(x+1, y)}\Big] + (b_1 + b_2 - 1) \mathbb{E}\Big[q^{H(x, y)}\Big] = 0,
\end{equation*}
where $b_1 = \frac{\alpha + \nu}{1 + \alpha}$, $b_2 = \frac{1 + \alpha q^J}{1 + \alpha}$. Summing this equation over $x = 0, 1, \dots, LX-1$ and $y = 0, 1, \dots LY-1$ yields
\begin{align*}
&-(1-b_1) \sum_{x =1}^{LX-1} \mathbb{E}\Big[q^{H(x, 0)}\Big] - (1-b_2) \sum_{y = 1}^{LY-1} \mathbb{E}\Big[q^{H(0, y)}\Big] 
+ (1-b_1)  \sum_{x = 1}^{LX-1} \mathbb{E}\Big[q^{H(x, LY)}\Big]\\ 
\numberthis \label{eq:temp17}
&+ (1-b_2) \sum_{y=1}^{LY-1} \mathbb{E}\Big[q^{H(LX, y)}\Big] 
+ (b_1+ b_2 -1) \mathbb{E}\Big[q^{H(0, 0)}\Big] - b_2 \mathbb{E}\Big[q^{H(LX, 0)}\Big] - b_1 \mathbb{E}\Big[q^{H(0, LY)}\Big] + \mathbb{E}\Big[q^{H(LX, LY)}\Big] = 0
\end{align*}
Since $H$ is Lipschitz, the sequence of deterministic functions $\mathbb{E}[q^{H(L\cdot, L\cdot)}] = \mathbb{E}[\mathfrak{q}^{\frac{1}{L} H(L\cdot, L\cdot)}]  \in C([0, A] \times [0, B])$  is uniformly bounded and equi-continuous. By Arzela-Ascoli Theorem, it has a limit point $\mathfrak{q}^{\widetilde{h}}$.
\bigskip
\\
Under scaling \eqref{eq:scaling}, when $L \to \infty$,
\begin{equation}\label{eq:scalingb1b2}
b_1 = 1 - \beta_1 I L^{-1} + \mathcal{O}(L^{-2}), \qquad b_2 = 1 - \beta_2 J L^{-1} + \mathcal{O}(L^{-2}).
\end{equation} 
Combining this with \eqref{eq:temp17} and taking the $L \to \infty$ limit, $\mathfrak{q}^{\widetilde{\mathbf{h}}}$ satisfies the integral equation
\begin{align*}
&-I \beta_1 \int_0^X \mathfrak{q}^{\widetilde{\mathbf{h}}(x, 0)} dx - J \beta_2  \int_0^Y \mathfrak{q}^{\widetilde{\mathbf{h}}(0, y)} + I\beta_1  \int_0^X \mathfrak{q}^{H(x, Y)} dx + J\beta_2 \int_0^Y \mathfrak{q}^{H(X, y)} dy\\ 
&+ \mathfrak{q}^{\widetilde{\mathbf{h}}(0, 0)} - \mathfrak{q}^{\widetilde{\mathbf{h}}(X, 0)} - \mathfrak{q}^{\widetilde{\mathbf{h}}(0, Y)} + \mathfrak{q}^{\widetilde{\mathbf{h}}(X, Y)} = 0
\end{align*}
In other words, any limit point $\mathfrak{q}^{\widetilde{\mathbf{h}}}$ of $\mathbb{E}\big[\mathfrak{q}^{\frac{1}{L} H(Lx, Ly)}\big]$ as $L \to \infty$ satisfies the telegraph equation
\begin{equation*}
\frac{\partial^2}{\partial x \partial y} \mathfrak{q}^{\widetilde{\mathbf{h}}(x, y)} + I\beta_1 \frac{\partial}{\partial y} \mathfrak{q}^{\widetilde{\mathbf{h}}(x, y)} + J\beta_2 \frac{\partial}{\partial x} \mathfrak{q}^{\widetilde{\mathbf{h}}(x, y)} = 0.
\end{equation*}
By our assumption on the boundary, we also know that $\mathfrak{q}^{\widetilde{\mathbf{h}}(x, 0)} = \mathfrak{q}^{\chi(x)}$ and $\mathfrak{q}^{\widetilde{\mathbf{h}}(0, y)} = \mathfrak{q}^{\psi(y)}$. This implies that $\widetilde{\mathbf{h}} = \mathbf{h} $, which concludes (i).
\bigskip
\\  
To verify (ii), we denote by $U(x, y) = q^{H(x, y)} - \mathbb{E}\big[q^{H(x, y)}\big]$. Using Theorem \ref{thm:fourpoint},  
$q^{H(x, y)}$ and $\mathbb{E}\big[q^{H(x, y)}\big]$ satisfy the discrete telegraph equation \eqref{eq:discretetelegraph} with $g$ given by $\xi$ and $0$ respectively, hence by linearity,
\begin{equation*}
U(x+1, y+1) - b_1 U(x, y+1) - b_2 U(x+1, y) + (b_1 + b_2 - 1) U(x, y) = \xi(x+1, y+1).
\end{equation*}
Summing over $x = 0, 1, \dots, LX-1$ and $y = 0, 1, \dots LY-1$, along with the fact  $U(x, 0) = U(0, y) = 0$ yields
\begin{align}\label{eq:temp31}
U(LX, LY) + (1-b_1) \sum_{x=1}^{LX-1} U(x, LY) + (1-b_2)\sum_{y=1}^{LY-1} U(LX, y) = \sum_{x=1}^{LX} \sum_{y=1}^{LY} \xi(x, y). 
\end{align}
Since $\xi(x, y)$ is a martingale increment,  using  Corollary \ref{cor:quadbound}
\begin{equation*}
\mathbb{E}\bigg[\Big(\sum_{x= 1}^{LA} \sum_{y=1}^{LB} \xi(x, y)\Big)^2\bigg] = \sum_{x=1}^{LA} \sum_{y=1}^{LB} \mathbb{E}\Big[\xi(x, y)^2\Big] \leq C AB L^{-1}.
\end{equation*}
Applying Doob's $L^p$ maximal inequality, it is clear that 
\begin{equation}\label{eq:temp32}
\sup_{(X, Y) \in [0, A] \times [0, B]} \Big|\sum_{x = 1}^{LX} \sum_{y=1}^{LY} \xi(x, y)\Big| \overset{p}{\to} 0.
\end{equation}
Observing that $U(L\cdot, L\cdot)$ are uniformly bounded and uniformly Lipschitz on $[0, A]  \times [0, B]$. Therefore, their law are tight, any subsequential limit $\widetilde{U}$ has continuous trajectories must solve the $L = \infty$ version of \eqref{eq:temp31}, which reads (the right hand side is zero by \eqref{eq:temp32})
\begin{equation*}
\widetilde{U}(X, Y) + I\beta_1 \int_0^X \widetilde{U}(x, Y) dx + J\beta_2 \int_0^Y \widetilde{U}(X, y) dy = 0.
\end{equation*}
According to \cite[Prop 4.1]{BG19}, the only solution to the above equation  is given by $\widetilde{U} = 0$, which implies (ii).
\end{proof}  
\noindent We move on proving the functional CLT for the SHS6V model. The proof of Theorem
1.7 is composed of showing the finite dimensional weak convergence and demonstrating the tightness, which is
formulated into the following two propositions. 
\bigskip
\\
Denote by $$U_L(x, y) := \sqrt{L} \bigg( q^{H(Lx, Ly)} - \mathbb{E}\Big[q^{H(Lx, Ly)}\Big]\bigg) = \sqrt{L} U(Lx, Ly).$$
\begin{prop}[finite dimensional convergence]\label{prop:cltfinitedim}
With the same setup as in Theorem 1.7, we have the weak convergence in finite dimension as $L \to \infty$,
\begin{equation*}
U_L(x, y) \Rightarrow \varphi(x, y).
\end{equation*} 
\end{prop}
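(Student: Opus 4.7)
The plan is to combine the discrete Duhamel representation of $U$ with a Cram\'er--Wold reduction and a martingale central limit theorem applied to the noise $\xi$. Since $U(x,y) := q^{H(x,y)} - \mathbb{E}[q^{H(x,y)}]$ has vanishing boundary values and satisfies the discrete telegraph equation \eqref{eq:discretetelegraph} with forcing $g = \xi$ (a consequence of Theorem \ref{thm:fourpoint} applied both to $q^H$ and to its expectation), formula \eqref{eq:temp29} yields the exact representation
\begin{equation*}
U_L(X,Y) \;=\; \sqrt{L}\sum_{x=1}^{\lfloor LX\rfloor}\sum_{y=1}^{\lfloor LY\rfloor}\mathcal{R}^d\bigl(\lfloor LX\rfloor,\lfloor LY\rfloor;\,x,y\bigr)\,\xi(x,y).
\end{equation*}
Fixing points $(X_1,Y_1),\dots,(X_k,Y_k)$ and reals $c_1,\dots,c_k$, I would set
\begin{equation*}
Z_L = \sum_{i=1}^k c_i\,U_L(X_i,Y_i) = \sum_{(x,y)} w_L(x,y)\,\xi(x,y), \quad w_L(x,y) := \sqrt{L}\sum_{i=1}^k c_i\,\mathcal{R}^d\bigl(\lfloor LX_i\rfloor,\lfloor LY_i\rfloor;x,y\bigr)\mathbf{1}_{\{x\le\lfloor LX_i\rfloor,\,y\le\lfloor LY_i\rfloor\}}.
\end{equation*}
Ordering the sites $(x,y)$ according to the sequential update rule of Definition \ref{def:SHS6V} and invoking Theorem \ref{thm:fourpoint}, the partial sums of $Z_L$ form a martingale transform of $\{\xi\}$, to which I would apply a classical martingale CLT (Brown 1971).

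To verify the hypotheses of the martingale CLT I first need the limit of the discrete Riemann function. Under \eqref{eq:scaling} and \eqref{eq:scalingb1b2}, a saddle-point/direct contour expansion of \eqref{eq:discreteRiemann} gives the uniform convergence on compact subsets of $\{0\le u<X,\ 0\le v<Y\}$,
\begin{equation*}
\mathcal{R}^d\bigl(\lfloor LX\rfloor,\lfloor LY\rfloor;\lfloor Lu\rfloor,\lfloor Lv\rfloor\bigr) \longrightarrow \mathcal{R}_{IJ}(X,Y;u,v),
\end{equation*}
with $\mathcal{R}_{IJ}$ as in \eqref{eq:riemannIJ}; in particular $w_L/\sqrt{L}$ is uniformly bounded. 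For the Lindeberg condition, a case analysis analogous to \textbf{Step 1} of Theorem \ref{thm:fourpointquad} gives the uniform bound $|\xi(x,y)|\le CL^{-1}$ on the compact region of interest, so each martingale increment $w_L(x,y)\xi(x,y)$ is $O(L^{-1/2})$ almost surely, which trivially implies Lindeberg. The only nontrivial ingredient is therefore the convergence in probability of the predictable quadratic variation
\begin{equation*}
V_L = \sum_{(x,y)} w_L(x,y)^2\,\mathbb{E}\bigl[\xi(x,y)^2 \,\big|\, \mathcal{F}(x-1,y-1)\bigr].
\end{equation*}

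The analysis of $V_L$ is the main obstacle. Substituting Theorem \ref{thm:fourpointquad}, the summand splits into three Riemann-sum pieces weighted by $\Delta_x\Delta_y$, $q^H\Delta_x$, $q^H\Delta_y$, plus an error bounded by $CL^{-4}$ per site; since $|w_L|^2=O(L)$ and there are $O(L^2)$ sites, the total error contribution is $O(L^{-1})$ and thus negligible. For the main pieces, Theorem \ref{thm:lln}(i) supplies $q^{H(Lu,Lv)}\overset{p}{\to}\mathfrak{q}^{\mathbf{h}(u,v)}$ uniformly on compact sets, and the piecewise-$C^1$ hypothesis on $\chi,\psi$ propagates through \eqref{eq:temp29} (for the continuum telegraph equation) to piecewise $C^1$ regularity of $\mathfrak{q}^{\mathbf{h}}$, so the discrete gradients $L\Delta_x, L\Delta_y$ approximate $\mathfrak{q}^{\mathbf{h}}_x, \mathfrak{q}^{\mathbf{h}}_y$ pointwise off a negligible set. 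Upgrading these in-probability pointwise limits to convergence of the weighted sums uses uniform boundedness of $q^H$ and the a priori bounds $|\Delta_x|,|\Delta_y|\le CL^{-1}$ used in Corollary \ref{cor:quadbound}, combined with the uniform convergence of $\mathcal{R}^d$, yielding
\begin{equation*}
V_L \overset{p}{\longrightarrow} \sum_{i,j}c_ic_j\!\int_0^{X_i\wedge X_j}\!\!\int_0^{Y_i\wedge Y_j}\!\!\mathcal{R}_{IJ}(X_i,Y_i;x,y)\mathcal{R}_{IJ}(X_j,Y_j;x,y)\,G(x,y)\,dx\,dy,
\end{equation*}
where $G = (\beta_1+\beta_2)\mathfrak{q}_x^{\mathbf{h}}\mathfrak{q}_y^{\mathbf{h}} + J(\beta_2-\beta_1)\beta_2\,\mathfrak{q}^{\mathbf{h}}\mathfrak{q}_x^{\mathbf{h}} + I(\beta_1-\beta_2)\beta_1\,\mathfrak{q}^{\mathbf{h}}\mathfrak{q}_y^{\mathbf{h}}$. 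By Remark \ref{rmk:covariance} this equals $\mathrm{Var}\bigl(\sum_i c_i\varphi(X_i,Y_i)\bigr)$, so the martingale CLT delivers $Z_L\Rightarrow\mathcal{N}(0,\lim V_L)$, and Cram\'er--Wold concludes the finite-dimensional convergence $U_L\Rightarrow\varphi$.
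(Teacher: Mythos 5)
Your overall plan is sound and mostly mirrors the paper's: you use the discrete Duhamel formula \eqref{eq:temp10}, diagonal ordering of sites, a martingale CLT, Lindeberg via the a.s.\ bound $|\xi|\le CL^{-1}$, and convergence of the predictable quadratic variation via Theorem \ref{thm:fourpointquad}. Your Cram\'er--Wold reduction is a slight variant --- the paper instead proves one-point convergence and cites a multi-dimensional martingale CLT from \cite{ST19} for the finite-dimensional statement; either route works.

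However there is a genuine gap in your analysis of $V_L$. You claim that ``the discrete gradients $L\Delta_x, L\Delta_y$ approximate $\mathfrak{q}^{\mathbf{h}}_x, \mathfrak{q}^{\mathbf{h}}_y$ pointwise off a negligible set.'' This is false. Since $\Delta_x = q^{H(x,y)}\bigl(q^{-v_{x,y}}-1\bigr)$ and $q=1+\tfrac{\beta_1-\beta_2}{L}+\mathcal{O}(L^{-2})$, we have $L\Delta_x = (\beta_2-\beta_1)\,q^{H(x,y)}\,v_{x,y} + \mathcal{O}(L^{-1})$, where $v_{x,y}\in\{0,\dots,I\}$ is the \emph{random} number of vertical lines at a single microscopic site. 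This quantity has $\mathcal{O}(1)$ fluctuations and does not converge pointwise to the deterministic slope $\mathfrak{q}^{\mathbf{h}}_x$ in any sense; only its \emph{average over a mesoscopic block} does, by the hydrodynamic limit. The paper therefore first averages over squares $[LX_0,L(X_0+\theta)]\times[LY_0,L(Y_0+\theta)]$, so that $\sum_y\bigl(q^{H(L(X_0+\theta),y)}-q^{H(LX_0,y)}\bigr)$ telescopes into a macroscopic difference controlled by Theorem \ref{thm:lln}, rather than trying to control each $\Delta_x$ individually. The cross term $\sum\Delta_x\Delta_y$ is even more delicate because the product of two random microscopic increments need not factor after averaging; the paper handles this via the ``unusual vertex'' count $\mathcal{C}$ and Lemma \ref{lem:scalingmatrix} (probability of an unusual vertex is $\mathcal{O}(L^{-1})$), yielding the bound \eqref{eq:temp22}. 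Your proposal does not address this decoupling at all, and simply invoking uniform boundedness of $q^H$ plus $|\Delta_x|\le CL^{-1}$ does not suffice to identify the limit of $\sum w_L^2\,\Delta_x\Delta_y$. You would need to reinstate the block-averaging and the unusual-vertex argument (or an equivalent) for the proof to be complete.
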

\noindent Recall that we linearly interpolate $H(x, y)$ for non-integer $x, y$, thus $H$ is a function in $C(\mathbb{R}^2_{\geq 0})$, so is $U_L (x, y)$.
\begin{prop}[tightness]\label{prop:tightness}
For each fixed $A, B > 0$ and $n \in \mathbb{N}$, there is a constant $C$ (only depends on $n, A, B$) such that for all $L > 1$ and $(X_1, Y_1), (X_2, Y_2) \in [0, LA] \times [0, LB]$,
\begin{align}\label{eq:incrementbound}
\mathbb{E}\bigg[\Big(U_L(X_1, Y_1) - U_L(X_2, Y_2)\Big)^{2n}\bigg] \leq C \big(|X_1 - X_2| + |Y_1 - Y_2|\big)^n.
\end{align}
Consequently, the sequence of random function $U_L (\cdot, \cdot) \in C(\mathbb{R}_{\geq 0}^2)$ is tight.
\end{prop}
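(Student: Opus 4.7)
The plan is to represent $U$ explicitly through the discrete Riemann function, apply Burkholder's inequality to a martingale representation of the increment, and then conclude tightness via the two-dimensional Kolmogorov continuity criterion. By Theorem \ref{thm:fourpoint}, the centered field $U(X,Y) = q^{H(X,Y)} - \mathbb{E}[q^{H(X,Y)}]$ satisfies \eqref{eq:discretetelegraph} with vanishing boundary data and forcing $g = \xi$. Hence formula \eqref{eq:temp29} yields the explicit representation
\[
U(X,Y) \;=\; \sum_{x=1}^{X}\sum_{y=1}^{Y} \mathcal{R}^d(X,Y;x,y)\,\xi(x,y),
\]
so the increment $U_L(X_1,Y_1) - U_L(X_2,Y_2)$ equals $\sqrt{L}$ times a linear combination of $\xi$ values with kernel $K_L(x,y) := \mathcal{R}^d(LX_1,LY_1;x,y)\mathbf{1}_{\{x\leq LX_1,\,y\leq LY_1\}} - \mathcal{R}^d(LX_2,LY_2;x,y)\mathbf{1}_{\{x\leq LX_2,\,y\leq LY_2\}}$.

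Next, order the lattice $\mathbb{Z}_{\geq 1}^2$ by the sequential update rule of Definition \ref{def:SHS6V}; with respect to the corresponding filtration, Theorem \ref{thm:fourpoint} makes $\{\xi(x,y)\}$ martingale differences. Burkholder's inequality then gives
\[
\mathbb{E}\Big[\big(U_L(X_1,Y_1) - U_L(X_2,Y_2)\big)^{2n}\Big] \;\leq\; C_n\, L^n\, \mathbb{E}\bigg[\Big(\sum_{x,y} K_L(x,y)^2\,\xi(x,y)^2\Big)^n\bigg].
\]
Expanding the $n$-th power and inserting a joint higher-moment bound of the form $\mathbb{E}\big[\prod_{k=1}^n \xi(x_k,y_k)^2\big] \leq C L^{-3n}$ (the content of the anticipated Lemma \ref{lem:technical bound}, which iterates Corollary \ref{cor:quadbound}) controls this by $C L^{-2n}\big(\sum_{x,y} K_L(x,y)^2\big)^n$.

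The remaining analytic task is to estimate the discrete $L^2$-norm of $K_L$. A saddle-point analysis of the contour integral \eqref{eq:discreteRiemann} under scaling \eqref{eq:scaling} shows that the rescaled kernel $\mathcal{R}^d(LX,LY;x,y)$ is uniformly bounded on compacts and Lipschitz in $(X,Y)$ at the natural scale, which yields
\[
\sum_{x,y} K_L(x,y)^2 \;\leq\; C\,L^2\,\big(|X_1-X_2| + |Y_1-Y_2|\big).
\]
Combining with the previous step gives the announced bound $C(|X_1-X_2|+|Y_1-Y_2|)^n$ at integer lattice points, and the linear interpolation used to define $H$ for non-integer arguments extends the bound to all $(X_i,Y_i)\in[0,LA]\times[0,LB]$ with an $\mathcal{O}(L^{-1})$ perturbation. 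Tightness in $C(\mathbb{R}^2_{\geq 0})$ then follows from \eqref{eq:incrementbound} by the two-dimensional Kolmogorov continuity criterion, applied with $n$ large enough that the right-hand-side exponent $n$ exceeds the spatial dimension $2$.

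The main obstacle is the joint moment estimate $\mathbb{E}[\prod_k \xi(x_k,y_k)^2]\leq C L^{-3n}$: one cannot merely iterate Corollary \ref{cor:quadbound} by conditioning on $\mathcal{F}(x,y)$ because distinct $\xi(x_k,y_k)$ depend on overlapping parts of the height function and their squares are not conditionally independent. The strategy I anticipate for Lemma \ref{lem:technical bound} is to condition on the latest point in the sequential order, exploit Theorem \ref{thm:fourpointquad} to extract a factor $L^{-3}$ from that point, and then iterate; Lemma \ref{lem:scalingmatrix} is what ensures the remaining $\xi^2$ factors retain their conditional bounds after marginalizing the latest vertex. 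As noted in Remark \ref{rmk:tightness}, the classical functional martingale CLT framework does not directly apply, so this bookkeeping, together with the contour estimates on $\mathcal{R}^d$, is the technical heart of the proof.
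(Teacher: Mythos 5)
Your high-level strategy coincides with the paper's: represent $U$ through the discrete Riemann kernel via \eqref{eq:temp29}, apply Burkholder--Davis--Gundy to the martingale increments $\xi$, invoke a joint moment bound on products of $\xi$-values (Lemma \ref{lem:technical bound}), control the discrete $L^2$-norm of the kernel by its Lipschitz behavior under the scaling, and conclude tightness from the two-dimensional Kolmogorov--Chentsov criterion. Your use of a single combined kernel $K_L$ (encoding both the difference of Riemann functions on the overlap region and the boundary strip) rather than the paper's explicit two-term split into $\mathbf{M}_1$ and $\mathbf{M}_2$ is an equivalent and perhaps tidier bookkeeping device, and you correctly observe that linear interpolation reduces non-integer arguments to the lattice case.

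There is, however, a genuine gap in the step where you assert that inserting $\mathbb{E}\big[\prod_{k=1}^n \xi(x_k,y_k)^2\big] \leq C L^{-3n}$ controls $L^n \mathbb{E}\big[(\sum K_L^2 \xi^2)^n\big]$ by $C L^{-2n}(\sum K_L^2)^n$. That implication would hold only if the moment bound were valid for \emph{all} $n$-tuples, but it holds only for \emph{distinct} points; when indices coincide the decay degrades (e.g.\ $\mathbb{E}[\xi^{2n}] \lesssim L^{-(2n+1)}$, not $L^{-3n}$), so the naive factorization $\sum_{x_1,\dots,x_n} \prod K_L^2\,\mathbb{E}[\prod \xi^2] \leq L^{-3n}(\sum K_L^2)^n$ is false for $n\geq 2$. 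What rescues the argument, and what the paper carries out, is a decomposition of the expansion by the partition $\lambda \vdash n$ recording which indices merge, together with the \emph{general} form of Lemma \ref{lem:technical bound}, $\mathbb{E}\big[\prod_i |\xi(x_i,y_i)|^{\ell_i}\big] \leq C L^{-\sum_i(\ell_i+1)}$ for distinct points. Each partition contributes a term of order $L^{\ell(\lambda)-n}\,|Y_1-Y_2|^{\ell(\lambda)}$, and these are dominated by $|Y_1-Y_2|^n$ only thanks to the restriction $|Y_1-Y_2| \geq L^{-1}$, which is precisely what the interpolation reduction provides. You mention the interpolation in passing but never connect it to the coinciding-index terms, so as written the estimate does not close. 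A smaller issue: your diagnosis of why Lemma \ref{lem:technical bound} is delicate (conditional dependence of distinct $\xi$'s) is misplaced --- iterating the conditional bound along the diagonal ordering works exactly because each earlier $\xi(x_i,y_i)$ is $\mathcal{F}(x_n-1, y_n-1)$-measurable; the real extra content is the conditional higher-moment bound $\mathbb{E}[|\xi|^\ell \,|\, \mathcal{F}] \leq C L^{-\ell-1}$ needed when indices merge into a single vertex carrying power $\ell > 2$.
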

\begin{proof}[Proof of Theorem \ref{thm:clt}]
The proof is a direct combination of Proposition \ref{prop:cltfinitedim} and Proposition \ref{prop:tightness}.
\end{proof}
We first prove the finite dimensional weak convergence.
\begin{proof}[Proof of Proposition \ref{prop:cltfinitedim}]
Recall that in the proof of Theorem \ref{thm:lln}, we set $U(x, y) = q^{H(x, y)} - \mathbb{E}[q^{H(x, y)}]$. As shown earlier, we have
\begin{equation*}
U(x+1, y+1) - b_1 U(x, y+1) - b_2 U(x+1, y) + (b_1 + b_2 - 1) U(x, y) = \xi(x+1, y+1)
\end{equation*}
Furthermore, since $H(x, 0)$ and $H(0, y)$ are deterministic, we have $U(x, 0) = U(y, 0) = 0$. By \eqref{eq:temp29}, one has
\begin{equation}\label{eq:temp10}
U(X, Y) = \sum_{x = 1}^{X} \sum_{y = 1}^{Y} \mathcal{R}^d (X, Y; x, y) \xi(x, y).
\end{equation}
Here $\mathcal{R}^d$ is defined through \eqref{eq:discreteRiemann} with $b_1 = \frac{\alpha + \nu}{1 + \alpha}$, $b_2 = \frac{1 + \alpha q^J}{1 + \alpha}$. 
\bigskip
\\
We need to show that $U_L (\cdot, \cdot) = \sqrt{L} U(L \cdot, L\cdot)$ converges weakly to $\varphi(\cdot, \cdot)$ (given by \eqref{eq:temp13}) in finite dimension. As in the proof of \cite[Theorem 6.1]{BG19}, we use the martingale central limit theorem \cite[Section 3]{HH14} for the martingale (note that $U_L(X, Y) = M_L (L^2 XY)$)
\begin{equation}\label{eq:temp24}
\bigg(M_L(t) := \sum_{i=1}^t \sqrt{L}\mathcal{R}^d(LX, LY, x(i), y(i)) \xi(x(i), y(i)), 1\leq t \leq L^2 XY\bigg)
\end{equation}
where we linearly order points in $[1, LX] \times [1, LY]$ by sequentially tracing the diagonals $x + y = \text{ const}$
\begin{equation}\label{eq:ordering}
(x(1), y(1)) := (1, 1),\quad (x(2), y(2)) := (2, 1),\quad (x(3), y(3)) := (1, 2),\quad (x(4), y(4)) := (3, 1) \dots
\end{equation}
Note that we will only deal with the one point convergence $M_L(L^2 XY) \Rightarrow \varphi(X, Y)$ for simplicity, the finite dimensional convergence can be proved by invoking multi-dimensional version of martingale CLT (see \cite[Theorem 3.1]{ST19}) for a multi-dimensional version of the martingale in \eqref{eq:temp24}.  
\bigskip
\\
The key for the proof is to study the conditional variance of $M_L(t)$ at $t = L^2 XY$. We show that as $L \to \infty$, it converges to the variance of $\varphi$ \eqref{eq:temp13} in probability. In other words, we need to prove 
\begin{align*}
&L \sum_{x = 0}^{LX-1} \sum_{y = 0}^{LY-1} \mathcal{R}^d (LX, LY, x+1, y+1)^2 \mathbb{E}\Big[\xi(x+1, y+1)^2 \big| \mathcal{F}(x,y)\Big]\\ 
\numberthis \label{eq:temp12}
&\p \int_0^X \int_0^Y \mathcal{R}_{IJ}(X, Y, x, y)^2 \Big((\beta_1 + \beta_2) \mathfrak{q}^{\mathbf{h}}_x \mathfrak{q}^{\mathbf{h}}_y + J(\beta_2 - \beta_1) \beta_2 \mathfrak{q}^{\mathbf{h}} \mathfrak{q}^{\mathbf{h}}_x + I(\beta_1 - \beta_2) \beta_1 \mathfrak{q}^{\mathbf{h}} \mathfrak{q}^{\mathbf{h}}_y\Big) dx dy.
\end{align*}
where the RHS above is the variance of $\varphi(X, Y)$, see Remark \ref{rmk:covariance}. 
\bigskip
\\
To prove this convergence, we first use Theorem \ref{thm:fourpointquad}, 
\begin{align*}
&L \sum_{x = 0}^{LX-1} \sum_{y = 0}^{LY-1} \mathcal{R}^d (LX, LY, x+1, y+1)^2 \mathbb{E}\Big[\xi(x+1, y+1)^2 \big| \mathcal{F}(x,y)\Big]\\ 
&= \!\sum_{x = 0}^{LX-1} \sum_{y = 0}^{LY-1}\!\mathcal{R}^d (LX, LY, x+1, y+1)^2 \Big((\beta_1 + \beta_2) \Delta_x \Delta_y + J L^{-1} (\beta_2 - \beta_1) \beta_2 q^{H(x, y)} \Delta_x + I L^{-1} (\beta_1 - \beta_2) \beta_1 q^{H(x, y)} \Delta_y\Big)\\ 
&\quad + L \sum_{x = 0}^{LX-1} \sum_{y = 0}^{LY-1} \mathcal{R}^d (LX, LY, x+1, y+1)^2 \mathbf{R}(x, y).
\end{align*}
By \eqref{eq:remainderbound}, $\sup_{x \in [0, LA], y \in [0, LB]} |\mathbf{R}(x, y)| \leq C L^{-4}$, together with the fact $\mathcal{R}^d$ is uniformly bounded in $[0, LA] \times [0, LB]$, we have almost surely,
\begin{equation*}
L \sum_{x = 0}^{LX - 1} \sum_{y = 0}^{LY-1} \mathcal{R}^d (LX, LY, x+1, y+1)^2 \mathbf{R}(x, y) \to 0 
\end{equation*}
uniformly in $(x, y) \in [0, LA] \times [0, LB]$. As a result, to demonstrate \eqref{eq:temp12}, it suffices to prove that as $L \to \infty$
\begin{align}
\label{eq:gradientx}
L^{-1} \sum_{x = 0}^{LX - 1} \sum_{y = 0}^{LY -1} \mathcal{R}^d (LX, LY, x+1, y+1)^2 q^{H(x, y)} \Delta_x &\p \int_0^X \int_0^Y \mathcal{R}_{IJ} (X, Y, x, y)^2 \mathfrak{q}_x^{\mathbf{h}} \mathfrak{q}^{\mathbf{h}} dx dy
\\
\label{eq:gradienty}
L^{-1}\sum_{x = 0}^{LX - 1} \sum_{y = 0}^{LY -1} \mathcal{R}^d (LX, LY, x+1, y+1)^2 q^{H(x, y)} \Delta_y &\p \int_0^X \int_0^Y \mathcal{R}_{IJ} (X, Y, x, y)^2 \mathfrak{q}_y^{\mathbf{h}} \mathfrak{q}^{\mathbf{h}} dx dy
\\
\label{eq:gradientxy}
\sum_{x = 0}^{LX-1} \sum_{y = 0}^{LY-1} \mathcal{R}^d (LX, LY, x+1, y+1)^2 \Delta_x \Delta_y  &\p \int_0^X \int_0^Y \mathcal{R}_{IJ} (X, Y, x, y)^2  \mathfrak{q}_x^{\mathbf{h}} \mathfrak{q}_y^{\mathbf{h}} dxdy
\end{align}
To demonstrate these approximations, as in the proof of \cite[Theorem 6.1]{BG19}, we split the the interval $[0, LX] \times [0, LY]$ into squares such as $[LX_0, L(X_0 + \theta)] \times [LY_0, L(Y_0 + \theta)]$ (where $\theta$ is small) and apply the discrete to continuous approximation in each square.
\bigskip
\\
We first demonstrate \eqref{eq:gradientx}, for 
$x \in [L X_0, L(X_0 + \theta)]$ and  $y \in [LY_0, L(Y_0 + \theta)]$, it is not hard to see that  $\mathcal{R}^d(LX, LY, Lx, Ly) \to \mathcal{R}_{IJ} (X, Y, x, y)$ uniformly for $0 \leq x \leq X \leq A$ and $0 \leq y \leq Y \leq B$ (see \cite[Eq 2.9]{ST19} for  $I = J = 1$ case). Thus,
\begin{equation}\label{eq:temp20}
\mathcal{R}^d (LX, LY, x, y) = \mathcal{R}_{IJ} (X, Y, X_0, Y_0) + \mathcal{O}(\theta) + o(1), \qquad
q^{H(LX, LY)}  = \mathfrak{q}^{\mathbf{h}(X_0, Y_0)} + \mathcal{O}(\theta) + o(1). 
\end{equation}
where $o(1)$ represents the term converging to zero as $L \to \infty$. Using these expansions, we have 
\begin{align*}
&L^{-1}\sum_{\substack{x \in [LX_0, L(X_0 + \theta)]\\ y \in [LY_0, L(Y_0 + \theta)]}} \mathcal{R}^d (LX, LY, x+1, y+1)^2 q^{H(x, y)} \Delta_x
\\ 
\numberthis \label{eq:temp19}
&= L^{-1} \mathcal{R}_{IJ} (X, Y, X_0, Y_0)^2 \mathfrak{q}^{\mathbf{h}(X_0, Y_0)} \times \Big(\sum_{y \in [LY_0, L(Y_0 + \theta)]} \big(q^{H(L(X_0 + \theta), y)} - q^{H(LX_0, y)}\big)\Big) + \mathcal{O}(\theta^3) + \theta^2 o(1)
\end{align*}
Using law of large number proved in Theorem \ref{thm:lln}, uniformly for $y' \in [Y_0, Y_0 + \theta]$
\begin{equation*}
q^{H(L(X_0 + \theta), L y')} - q^{H(LX_0, Ly')} =  \mathfrak{q}^{\mathbf{h}(X_0 + \theta, y')} - \mathfrak{q}^{\mathbf{h}(X_0, y')} + o(1).
\end{equation*}
Consequently, it follows from \eqref{eq:temp19}
\begin{align*}
&L^{-1} \sum_{\substack{x \in [LX_0, L(X_0 + \theta)]\\ y \in [LY_0, L(Y_0 + \theta)]}} \mathcal{R}^d (LX, LY, x+1, y+1)^2 q^{H(x, y)} \Delta_x\\ 
&= L^{-1} \mathcal{R}_{IJ}(X, Y, X_0, Y_0)^2 \mathfrak{q}^{\mathbf{h}(X_0, Y_0)} \int_{Y_0}^{Y_0 + \theta} \Big(\mathfrak{q}^{\mathbf{h}(X_0 + \theta, y)} - \mathfrak{q}^{\mathbf{h}(X_0, y)}\Big) dy + \theta o(1) + \mathcal{O}(\theta^3) + \theta^2 o(1),
\\
\numberthis \label{eq:temp30}
&= L^{-1} \mathcal{R}_{IJ}(X, Y, X_0, Y_0)^2 \mathfrak{q}^{\mathbf{h}(X_0, Y_0)} \mathfrak{q}_x^{\mathbf{h}(X_0, Y_0)} \theta^2 + \theta o(1) + \mathcal{O}(\theta^3) + \theta^2 o(1).
\end{align*}
Note that in the last line, we used the property that the solution $\mathfrak{q}^{\mathbf{h}}$ to the \eqref{eq:llntelegraph} is piecewise $C^1$ (since we assume additionally the boundary $\chi$ and $\psi$ are smooth). By \eqref{eq:temp30}, 
\begin{align*}
&L^{-1} \sum_{x = 0}^{LX - 1} \sum_{y = 0}^{LY -1} \mathcal{R}^d (LX, LY, x+1, y+1)^2 q^{H(x, y)} \Delta_x\\
\numberthis \label{eq:temp23}
&= \sum_{0 \leq i \ \leq X/\theta} \sum_{0 \leq j \leq Y/\theta}  \mathcal{R}_{IJ} (X, Y, \theta i, \theta j)^2 \mathfrak{q}^{\mathbf{h}(\theta i, \theta j)} \mathfrak{q}_x^{\mathbf{h}(\theta i, \theta j)} \theta^2 + (1 + \theta^{-1}) o(1) + \mathcal{O}(\theta).
\end{align*}
By first letting $L \to \infty$ then $\theta \to 0$, we conclude the desired \eqref{eq:gradientx}.
The approximation for \eqref{eq:gradienty} is similar, we omit the detail. 
\bigskip
\\
Things become more involved for \eqref{eq:gradientxy}, note that
\begin{align*}
&\sum_{x = LX_0}^{L(X_0 + \theta)} \sum_{y = LY_0 }^{L(Y_0 + \theta)} \mathcal{R}^d(LX, LY, x+1, y+1)^2 \Delta_x \Delta_y\\ 
&= \mathcal{R}^d (LX, LY, LX_0, Y_0) \bigg(\sum_{x = LX_0}^{L(X_0 + \theta)} \sum_{y = LY_0 }^{L(Y_0 + \theta)}  \Delta_x \Delta_y\bigg) + \mathcal{O}(\theta^3),\\
\numberthis \label{eq:temp21}
&= L^{-2} (\log\mathfrak{q})^2 \mathcal{R}_{IJ} (X, Y, X_0, Y_0) \mathfrak{q}^{2\mathbf{h}(X_0, Y_0)} 
\bigg(\sum_{x = LX_0}^{L(X_0 + \theta)}  \sum_{y = LY_0}^{L(Y_0 + \theta)}\nabla_x H(x, y) \nabla_y H(x, y)\bigg) + o(1) \mathcal{O}(\theta^2) + \mathcal{O}(\theta^3), 
\end{align*}
where we denote by $\nabla_x H(x, y) := H(x+1, y) - H(x, y), \nabla_y H(x, y) := H(x, y+1) - H(x, y).$
In the last equality, we used the approximation in \eqref{eq:temp20} and 
\begin{align*}
\Delta_x &= q^{H(x+1, y)} - q^{H(x, y)} = L^{-1} \nabla_x H(x, y) \mathfrak{q}^{\mathbf{h}(X_0, Y_0)} \log \mathfrak{q} + L^{-1} o(1),\\ 
\Delta_y &= q^{H(x, y+1)} - q^{H(x, y)} = L^{-1} \nabla_y H(x, y) \mathfrak{q}^{\mathbf{h}(X_0, Y_0)} \log \mathfrak{q} + L^{-1} o(1) .
\end{align*}
Note that $-\nabla_x H(x, y), \nabla_y H(x, y)$ indicate the number of lines entering into the vertex $(x, y)$ from bottom and left.
\bigskip
\\
For a vertex associated with four tuple $(i_1, j_1; i_2, j_2)$, we say this vertex is \emph{unusual} if $i_1 \neq i_2$ or $j_1 \neq i_2$. Let $\square$ denote the square $[L X_0, L X_0 + L\theta] \times [LY_0, LY_0 + L \theta]$ and suppose that there are respectively $n$ and $m$ lines entering inside $\square$ from bottom and left. Let $\mathcal{C}$ be the number of unusual vertices in the square. If $\mathcal{C} = 0$, it is clear that  
\begin{equation*}
\sum_{x = LX_0}^{L(X_0 + \theta)} \sum_{y = LY_0}^{L(Y_0 + \theta)} \nabla_x H(x, y) \nabla_y H(x, y) = -n m. 
\end{equation*}
Each unusual vertex might change the LHS summation at most by $2IJ \theta L$. As an analogue of \cite[Eq. 93]{BG19},
\begin{equation*}
\Big|\sum_{x = LX_0}^{L(X_0 + \theta)} \sum_{y = LY_0}^{L(Y_0 + \theta)} \nabla_x H(x, y) \nabla_y H(x, y) + n m \Big| \leq IJ \theta L \cdot \mathcal{C}.
\end{equation*}
It follows from Lemma \ref{lem:scalingmatrix} that the probability that a vertex is unusual is upper bounded by $CL^{-1}$, where $C$ is a constant. Thus, 
\begin{equation}\label{eq:temp22}
\Big|\sum_{x = LX_0}^{L(X_0 + \theta)} \sum_{y = LY_0}^{L(Y_0 + \theta)} \nabla_x H(x, y) \nabla_y H(x, y) + n m \Big| \leq \text{const} \cdot \theta^3 L^2,
\end{equation}
with high probability as $L \to \infty$.
Noting that 
\begin{equation*}
H\big(L(X_0 +\theta), L Y_0\big) - H(LX_0, LY_0) = -n, \qquad  H\big(L X_0, L (Y_0 + Y)\big) - H(LX_0, LY_0) = m.
\end{equation*}
Combining \eqref{eq:temp21} and \eqref{eq:temp22} (together with Theorem \ref{thm:fourpoint}) yields
\begin{align*}
&\sum_{x = LX_0}^{L(X_0 + \theta)} \sum_{y = LY_0}^{L(Y_0 + \theta)} \mathcal{R}^d(LX, LY, x+1, y+1)^2 \Delta_x \Delta_y\\ &= L^{-2} (\log \mathfrak{q})^2 \mathcal{R}_{IJ} (X, Y, X_0, Y_0) \mathfrak{q}^{2 \mathbf{h}(X_0, Y_0)} \big(H(L(X_0 + \theta), L Y_0) - H(LX_0, LY_0)\big) \big(H(L X_0, L (Y_0 + \theta)\big) - H(LX_0, LY_0)\big)\\
&\quad + o(1) \mathcal{O}(\theta^2) + \mathcal{O}(\theta^3)\\
&= \mathcal{R}_{IJ} (X, Y, X_0, Y_0) (\log \mathfrak{q})^2 \mathfrak{q}^{2 \mathbf{h}(X_0, Y_0)} \big(\mathbf{h}(X_0 + \theta, Y_0) - \mathbf{h}(X_0, Y_0)\big) \big(\mathbf{h}(X_0,  Y_0 + \theta) - \mathbf{h}(X_0, Y_0)\big) + o(1) \mathcal{O}(\theta^2) + \mathcal{O}(\theta^3)
\end{align*} 
Using similar approximation as in \eqref{eq:temp23}, by first letting $L \to \infty$ then $\theta \to 0$, we demonstrate \eqref{eq:gradientxy}. Having proved \eqref{eq:gradientx}-\eqref{eq:gradientxy}, we simply obtain the desired \eqref{eq:temp12}.
\bigskip
\\
We conclude the theorem using martingale CLT \cite[Section 3]{HH14}. Recall that $$M_L (t) = \sqrt{L} \sum_{i = 1}^t \mathcal{R}^d\big(LX, LY, x(i), y(i)\big) \xi\big(x(i), y(i)\big),\qquad  t \in [1, L^2 X Y],$$ 
We want to show $U_L (X, Y) =  M_L(L^2 XY) \to \varphi(X, Y)$ in law as $L \to \infty$. By Theorem \ref{thm:fourpoint}, $M_L (t)$ is a martingale  with respect to the its own filtration. The proof of Theorem \ref{thm:clt} reduces to verify the following conditions for martingale CLT:
\begin{enumerate}[(i).]
\item The conditional covariance of $M_L (t)$ at $t = L^2 XY$, which equals 
\begin{equation*}
L \sum_{x= 0}^{LX-1} \sum_{y = 0}^{LY-1} \mathcal{R}^d (LX, LY; x+1, y+1)^2 \mathbb{E}\Big[\xi(x+1, y+1)^2 \big| \mathcal{F}(x,y)\Big],
\end{equation*}
has the same $L \to \infty$ behavior as its unconditional variance, in the sense that their ratio tends to $1$ in probability. 
\item The Lindeberg's condition, i.e. $\lim_{L \to \infty} \sum_{i = 1}^{L^2 XY} \mathbb{E}\Big[(M_L (i) - M_L (i-1))^2 \mathbf{1}_{\{(M_L (i) - M_{L} (i-1))^2 > \epsilon\}}\Big] = 0$.	
\end{enumerate}
Using Corollary \ref{cor:quadbound}, it is clear that the conditional variance on the LHS of \eqref{eq:temp12} is uniformly bounded. By the convergence in \eqref{eq:temp12} together with dominated convergence theorem, we know that both the conditional and unconditional variance of $M_L (t)$ at $t = L^2 XY$ converge to the RHS of \eqref{eq:temp12} (which equals to variance of $\varphi(X, Y)$ given in Remark \ref{rmk:covariance}), this concludes (i).
\bigskip
\\
The Lindeberg's condition (ii) follows directly from how $\xi$ is defined:
By straightforward computation, there exists constant $C$ such that $|\xi(x+1, y+1)| \leq C L^{-1}$  for all $x \in [0, LA]$ and $y \in [0, LB]$. In addition,  $\mathcal{R}^d (LX, LY, x, y)$ is uniformly bounded. So when $L$ is large enough,
$$\Big\{\big(M_L (i) - M_L (i-1)\big)^2 > \epsilon\Big\} = \Big\{L \mathcal{R}^d (LX, LY, x(i), y(i))^2  \xi(x(i), y(i))^2 > \epsilon\Big\} = \emptyset,$$ 
which implies that for every $i \in [1, L^2 XY]$,
\begin{equation*}
\mathbb{E}\Big[(M_L (i) - M_L (i-1))^2 \mathbf{1}_{\{(M_L(i) - M_L (i-1))^2 > \epsilon\}}\Big] = 0.
\end{equation*}
Having verified (i) and (ii), we conclude our proof using the martingale central limit theorem.
\end{proof}
We move on proving Proposition \ref{prop:tightness}. Before presenting our proof,  we require the following result. 
\begin{lemma}\label{lem:technical bound}
Fixed $A, B \geq 0$ and $n, \ell_1, \dots \ell_n \in \mathbb{N}$, there exists constant $C$ (only depends on $A, B, n$) such that for all $L > 1$ and arbitrary distinct points $(x_i, y_i) \in [1, LA] \times [1, LB]$, $i = 1, \dots n$,
\begin{equation*}
\mathbb{E}\Big[\prod_{i=1}^n |\xi(x_i, y_i)|^{\ell_i}\Big] \leq C L^{-\sum_{i=1}^n (\ell_i + 1)}.
\end{equation*}
\end{lemma}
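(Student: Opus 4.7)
The plan is to combine a sharp single-point conditional moment estimate with an iterated conditioning argument that peels off one point at a time using the maximality structure of the lattice.

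The first step is to establish, for every integer $\ell \geq 1$ and every $(x, y) \in [1, LA] \times [1, LB]$, the uniform bound
\begin{equation*}
\mathbb{E}\bigl[|\xi(x, y)|^\ell \,\big|\, \mathcal{F}(x-1, y-1)\bigr] \leq C L^{-(\ell+1)},
\end{equation*}
with $C$ depending only on $A, B, \ell$. For $\ell \geq 2$ this is immediate: rewriting $\xi$ as a weighted sum of three differences $q^{H(x+1, y+1)} - q^{H(x, y)}$, $q^{H(x, y+1)} - q^{H(x, y)}$, $q^{H(x+1, y)} - q^{H(x, y)}$ (the coefficients of $\xi$ sum to zero) and noting that neighbouring heights differ by at most $\max(I, J)$ while $|q - 1| = O(L^{-1})$ gives the deterministic bound $|\xi(x, y)| \leq C L^{-1}$; then $|\xi|^\ell \leq (C L^{-1})^{\ell - 2} \xi^2$ together with Corollary~\ref{cor:quadbound} closes this case.

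The case $\ell = 1$ is the delicate point. For $J = 1$, $\xi(x, y)$ conditional on $\mathcal{F}(x-1, y-1)$ is a mean-zero two-valued random variable taking outcomes $\xi_+, \xi_-$ of opposite sign, and a direct computation from Definition~\ref{def:j1lmatrix} (as in Step~1 of the proof of Theorem~\ref{thm:fourpointquad}) shows that the gap $|\xi_+ - \xi_-|$ equals the deterministic quantity $q^{H(x-1, y-1) - v}\,|q - 1|$; this is bounded below by $c L^{-1}$ on the region since $|q - 1| \sim |\beta_1 - \beta_2|/L$ and $q^{H - v}$ is bounded above and below by positive constants depending on $A, B, I, J, \beta_1, \beta_2$. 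For any mean-zero binary random variable one has the elementary identity $\mathbb{E}[|\xi|\,|\,\mathcal{F}] = 2\,\mathbb{E}[\xi^2\,|\,\mathcal{F}]/|\xi_+ - \xi_-|$, so Corollary~\ref{cor:quadbound} gives $\mathbb{E}[|\xi(x, y)|\,|\,\mathcal{F}(x-1, y-1)] \leq 2 C L^{-3} / c L^{-1} \leq C' L^{-2}$. For general $J$, the fusion decomposition \eqref{eq:temp8} writes $\xi = \sum_{i=1}^{J} \frac{1 + \alpha q^{J-i}}{1+\alpha}\,\xi_i$; applying the triangle inequality and the $J = 1$ bound for each $\xi_i$ along the fusion filtration $\{\mathcal{F}_{i-1}\}$ yields the $\ell = 1$ estimate for arbitrary $J$.

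With this in hand I prove the lemma by induction on $n$, the base case being the single-point bound. For the inductive step, choose among $(x_1, y_1), \dots, (x_n, y_n)$ any coordinate-wise (Pareto) maximal point and relabel it as $(x_1, y_1)$: then for each $i \neq 1$ we have $x_i < x_1$ or $y_i < y_1$, so every one of the four lattice sites on which $\xi(x_i, y_i)$ depends satisfies $u \leq x_1 - 1$ or $v \leq y_1 - 1$ and is therefore $\mathcal{F}(x_1 - 1, y_1 - 1)$-measurable. The tower property now yields
\begin{align*}
\mathbb{E}\Big[\prod_{i=1}^n |\xi(x_i, y_i)|^{\ell_i}\Big] &= \mathbb{E}\Big[\prod_{i \neq 1} |\xi(x_i, y_i)|^{\ell_i} \cdot \mathbb{E}\bigl[|\xi(x_1, y_1)|^{\ell_1}\,\big|\,\mathcal{F}(x_1 - 1, y_1 - 1)\bigr]\Big] \\
&\leq C L^{-(\ell_1 + 1)} \cdot \mathbb{E}\Big[\prod_{i \neq 1} |\xi(x_i, y_i)|^{\ell_i}\Big],
\end{align*}
and the induction hypothesis bounds the remaining expectation by $C L^{-\sum_{i \neq 1}(\ell_i + 1)}$, closing the induction.

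The main obstacle is the $\ell = 1$ instance of the single-point estimate: a naive Cauchy--Schwarz from $\mathbb{E}[\xi^2\,|\,\mathcal{F}] = O(L^{-3})$ only delivers $L^{-3/2}$, and one really must exploit that the $J = 1$ update is a binary martingale increment with a deterministic gap of order $L^{-1}$. Everything else is powered by the martingale identity in Theorem~\ref{thm:fourpoint}, the quadratic variation bound in Corollary~\ref{cor:quadbound}, and the geometric structure of the filtration $\mathcal{F}(x, y)$.
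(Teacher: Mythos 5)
Your iteration over a Pareto-maximal point and conditioning on $\mathcal{F}(x_1-1, y_1-1)$ is the same argument the paper runs via the diagonal ordering \eqref{eq:ordering}; both reduce the lemma to the single-point estimate $\mathbb{E}\big[|\xi|^\ell \mid \mathcal{F}\big] \leq CL^{-\ell-1}$. Where you genuinely differ is in proving that estimate. The paper handles all $\ell \geq 1$ at once: it writes $\xi(x+1,y+1) = q^{H(x,y)}\big(\ln\mathfrak{q}\,(v-v')L^{-1} + \mathcal{O}(L^{-2})\big)$, so $|\xi| \leq CL^{-2}$ for transitions that preserve line directions and $|\xi| \leq CL^{-1}$ otherwise, and combines this with Lemma~\ref{lem:scalingmatrix} (the latter transitions have probability $\mathcal{O}(L^{-1})$) to bound each term of $\sum_{(h',v')} L^{(J)}_\alpha(h,v;h',v')\,|\xi|^\ell$ by $CL^{-\ell-1}$ directly, for any $J$. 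You instead split cases: $\ell \geq 2$ follows from the crude bound $|\xi| \leq CL^{-1}$ together with Corollary~\ref{cor:quadbound}, while $\ell = 1$ uses the binary-martingale identity $\mathbb{E}[|\xi|\mid\mathcal{F}] = 2\,\mathbb{E}[\xi^2\mid\mathcal{F}]/|\xi_+ - \xi_-|$ with a deterministic gap $q^{H-v}|q-1|$ bounded below by a constant multiple of $L^{-1}$ for $J = 1$, and then fusion plus the triangle inequality for general $J$. Both arguments are correct: your identity and gap bound check out (the degenerate configurations where the vertex update is deterministic give $\xi \equiv 0$ and are harmless), and the fusion step is justified by the same sequential-update conditioning $\mathbb{E}[\,\cdot\mid\mathcal{F}] = \mathbb{E}[\mathbb{E}[\,\cdot\mid\mathcal{F}_{i-1}]\mid\mathcal{F}]$ the paper uses in Theorem~\ref{thm:fourpointquad}. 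The paper's route is shorter and entirely avoids the $\ell=1$ versus $\ell\geq 2$ split and the extra appeal to fusion, reading the needed $\mathcal{O}(L^{-2})$-versus-$\mathcal{O}(L^{-1})$ dichotomy off the $\mathbb{L}$-matrix asymptotics; your route makes transparent that the delicate $\ell=1$ case is saved precisely by the non-degenerate deterministic gap of a binary martingale increment, which is a useful conceptual clarification at the cost of a more elaborate structure.
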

\begin{proof}
	
It suffices to prove that for $(x, y) \in [0, LA-1] \times [0, LB-1]$,
\begin{equation}\label{eq:temp35}
\mathbb{E}\Big[|\xi(x+1, y+1)|^{\ell} \big|  \mathcal{F}(x, y)\Big] \leq C L^{-\ell - 1}.
\end{equation}
We first finish the proof of the lemma by assuming \eqref{eq:temp35}. Consider the ordering \eqref{eq:ordering} of integer points in $[1, LA] \times [1, LB]$, without loss of generality, we  assume $(x_i, y_i) = (x(s_i), y(s_i))$ so that $s_1 <  \dots  < s_n$. Recall that $\mathcal{F}(x, y) =  \sigma\big(H(i, j): i \leq x \text{ or } j \leq y\big)$, so $\xi(x_i, y_i) \in \mathcal{F}(x_n-1, y_n-1)$ for $i = 1, \dots , n-1$. By \eqref{eq:temp35} and conditioning,
\begin{align*}
\mathbb{E}\Big[\prod_{i = 1}^n|\xi(x_i, y_i)|^{\ell_i}\Big] &= \mathbb{E}\Big[\prod_{i=1}^{n-1} |\xi(x_i, y_i)|^{\ell_i}\Big] \mathbb{E}\Big[|\xi(x_n, y_n)|^{\ell_n} \big| \mathcal{F}(x_n - 1, y_n - 1)\Big]\\
&\leq C L^{-\ell_n -1} \mathbb{E}\Big[\prod_{i=1}^{n-1} |\xi(x_i, y_i)|^{\ell_i}\Big].
\end{align*}
Iterating the above inequality, we conclude the lemma.
\bigskip
\\
We move on showing \eqref{eq:temp35}. Denote $v, v'$ to be the vertical input and output for the vertex $(x, y)$ and $h$ to be the horizontal input, i.e. 
$$v:= H(x, y) - H(x+1, y),\quad v':= H(x, y+1) - H(x+1, y+1),\quad  h:= H(x, y+1) - H(x, y)  .$$ It is evident that we can rewrite $\xi(x+1, y+1)$ in \eqref{eq:xi} as
\begin{align}\label{eq:temp40}
\xi(x+1, y+1) = q^{H(x, y)} \big(q^{h-v'} - b_1 q^h - b_2 q^{-v} + b_1 + b_2-1\big),
\end{align}
recall $b_1 = \frac{\alpha + \nu}{1 + \alpha}$ and $b_2 = \frac{1 + \alpha q^J}{1 + \alpha}$. Since $q = \mathfrak{q}^{\frac{1}{L}}$ where $\mathfrak{q}$ is fixed, 
so for $(x, y) \in [0, LA] \times [0, LB]$, there exists $C$ such that $\frac{1}{C} \leq q^{H(x, y)} \leq C$. In addition, by  \eqref{eq:scalingb1b2},
\begin{align*}
q^{h-v'} - b_1 q^{h} - b_2 q^{-v} + b_1 + b_2 - 1 &= q^{h-v'} - q^{h} - q^{-v} + 1 + (1-b_1) (q^h - 1) + (1-b_2) (q^{-v} - 1)\\  
&= \ln \mathfrak{q} (v - v') L^{-1} + \mathcal{O}(L^{-2})
\end{align*}
Referring to \eqref{eq:temp40}, we conclude that for fixed $A$ and $B$ there exists a constant $C$ such that for arbitrary $L > 1$, $(x, y) \in [0, LA] \times [0, LB]$, 
\begin{equation}\label{eq:boundforxi}
\begin{aligned}
&|\xi(x+1, y+1)| \leq C L^{-2} \qquad \text{if } (h, v) = (h', v')\\
&|\xi(x+1, y+1)| \leq C L^{-1} \qquad \text{if } (h, v) \neq  (h', v')
\end{aligned}
\end{equation}
Note that 
\begin{align*}
\mathbb{E}\Big[|\xi(x+1, y+1)|^\ell \big| \mathcal{F}(x, y)\Big]
&= \mathbb{E}\Big[|\xi(x+1, y+1)|^{\ell} \big| \sigma(H(x, y), h, v)\Big]\\
\numberthis \label{eq:temp41}
&= \sum_{(h', v'): h'+v' = h + v} L_{\alpha}^{(J)}(h, v; h', v') |\xi(x+1, y+1)|^\ell
\end{align*}
Using Lemma \ref{lem:scalingmatrix} and \eqref{eq:boundforxi}, we know that for each term in the summation: Either $(h', v') \neq (h, v)$, which implies  
$L_{\alpha}^{(J)} (h, v; h', v') \leq C L^{-1}$ and $|\xi(x+1, y+1)| \leq C L^{-1}$; Either $(h, v) = (h', v')$, which yields $|\xi(x+1, y+1)| \leq C L^{-2}$. Hence, the absolute value for each term in the summation \eqref{eq:temp41} is upper bounded by $C L^{-\ell -1}$. As the summation is finite,  we conclude \eqref{eq:temp35}.
\end{proof}
\begin{proof}[Proof of Proposition \ref{prop:tightness}]
Using the Kolmogorov-Chentsov criterion, the tightness of $U_L (\cdot, \cdot)$ follows directly from \eqref{eq:incrementbound}. To prove \eqref{eq:incrementbound}, it suffices to show that there exists constant $C$ such that  for $X \in [0, LA]$ and $0\leq  Y_1 \leq Y_2 \leq LB$,
\begin{align}\label{eq:temp42}
\mathbb{E}\Big[\Big(U_L(X, Y_1) - U_L(X, Y_2)\Big)^{2n}\Big] \leq C |Y_1 - Y_2|^n,
\end{align}
Since we linearly interpolate $H(X, Y)$ for non-integer $X, Y$ and $U_L(X, Y)$ is expressed in terms of $H(LX, LY)$, we can assume $Y_2 - Y_1 \geq L^{-1}.$
Referring to \eqref{eq:temp10}, we know that 
\begin{equation}\label{eq:temp39}
U_L(X, Y) = \sqrt{L} \sum_{x=1}^{LX} \sum_{y=1}^{LY} \mathcal{R}^d(LX, LY, x, y) \xi(x, y),
\end{equation}
which implies 
\begin{align*}
U_L (X, Y_2) - U_L(X, Y_1) &=  \sum_{x=1}^{LX} \sum_{y=1}^{LY_1} \sqrt{L} \big(\mathcal{R}^d(LX, LY_1, x, y) - \mathcal{R}^d(LX, LY_2, x, y)\big) \xi(x, y)\\
&\quad + \sum_{x=1}^{LX}\sum_{y = LY_1 + 1}^{LY_2} \sqrt{L}\mathcal{R}^d(LX, LY_2, x, y) \xi(x, y)
\end{align*} 
Taking the $n$-th power of both sides in the above display and apply the inequality $(a+b)^{2n} \leq 2^{2n-1} (a^{2n} + b^{2n})$ to the RHS, we have 
\begin{align*}
\mathbb{E}\Big[\big(U_L(X_, Y_2) - U_L (X, Y_1)\big)^{2n}\Big] &\leq 2^{2n-1}
\mathbb{E}\Big[\Big(\sum_{x=1}^{LX} \sum_{y=1}^{LY_1} \sqrt{L} \big(\mathcal{R}^d(LX, LY_1, x, y) - \mathcal{R}^d(LX, LY_2, x, y)\big) \xi(x, y)\Big)^{2n}\Big]\\ 
\numberthis \label{eq:temp33}
&\quad + 2^{2n-1} \mathbb{E}\Big[\Big( \sum_{x=1}^{LX} \sum_{y = LY_1 + 1}^{LY_2} \sqrt{L}\mathcal{R}^d(LX, LY_2, x, y) \xi(x, y)\Big)^{2n}\Big]
\end{align*}
Denote the first and second term above (without the constant multiplier) by $\mathbf{M}_1$ and $\mathbf{M}_2$ respectively. We proceed to upper bound $\mathbf{M}_1$ and $\mathbf{M}_2$ respectively.
\bigskip
\\
For $\mathbf{M}_1$, since $\xi(x, y)$ is a martingale increment, by Burkholder–Davis–Gundy inequality, we have  
\begin{align*}
\mathbf{M}_1 \leq
C(n) L^n \mathbb{E}\bigg[\Big(\sum_{x=1}^{LX} \sum_{y=1}^{LY_1} \big(\mathcal{R}^d (LX, LY_1, x, y) - \mathcal{R}^d(LX, LY_2, x, y)\big)^2 \xi(x, y)^2\Big)^{n}\bigg],
\end{align*}
where the constant $C(n)$ only depends on $n$. Under scaling \eqref{eq:scalingb1b2}, there exists a constant $C$ such that for $L> 1$, $X \in [0, LA]$ and $Y_1, Y_2 \in [0, LB]$ 
(one can see this from the expression of $\mathcal{R}^d$ in \eqref{eq:discreteRiemann}),
\begin{equation*}
\big|\mathcal{R}^d(LX, LY_1, x, y) - \mathcal{R}^d(LX, LY_2, x, y)\big| \leq C |Y_1 - Y_2|,
\end{equation*} 
this implies 
\begin{equation}\label{eq:M1bound}
\mathbf{M}_1 \leq C(n) |Y_1  -Y_2|^{2n}\cdot L^{n}  \mathbb{E}\bigg[\Big(\sum_{x=1}^{LX} \sum_{y=1}^{LY_1} \xi(x, y)^2\Big)^n\bigg].
\end{equation}
We claim that for all $L > 1$, the term 
$L^n \mathbb{E}\Big[\big(\sum_{x=1}^{LX} \sum_{y=1}^{LY_1} \xi(x, y)^2\big)^n\Big]$ is uniformly upper bounded for $(x, y) \in [0, LA] \times [0, LB]$. To see this, we expand the $n$-th power of the double summation in the expectation above. It is not hard to see that there exists a constant $C$ such that 
\begin{align*}
L^n \mathbb{E}\bigg[\Big(\sum_{x=1}^{LX} \sum_{y=1}^{LY_1} \xi(x, y)^2\Big)^n\bigg]
&\leq C L^n \sum_{\lambda\vdash n}\sum_{\substack{(x_i, y_i) \in [1, LX] \times [1, LY_1]\\ i = 1, \dots , \ell(\lambda), (x_i, y_i)  \text{ are distinct}}} \mathbb{E}\Big[\prod_{i=1}^{\ell(\lambda)} \xi(x_i, y_i)^{2\lambda_i}\Big]
\end{align*}
Here, the summation above is taken over the partition $\lambda$ of $n$, that is to say, $\lambda = (\lambda_1 \geq \dots \geq \lambda_{s}) \in \mathbb{Z}_{\geq 1}^{s}$ with $\sum_{i=1}^{s} \lambda_i = n$, $\ell(\lambda) = s$ is the length of the partition $\lambda$. We want to upper bound the right hand side in the above display. By Lemma \ref{lem:technical bound}, we know that the $\mathbb{E}\Big[\prod_{i=1}^{\ell(\lambda)} \xi(x_i, y_i)^{2\lambda_i}\Big]$ can be upper bounded by a constant times $L^{-2n - \ell(\lambda)}$. In addition, it is clear that $\#\big\{(x_i, y_i) \in [1, LX] \times [1, LY_1], i = 1, \dots , \ell(\lambda), (x_i, y_i)  \text{ are distinct}\big\} \leq (L^2 X Y_1)^{\ell(\lambda)}$ ($\#A$ denotes the number of elements in the set $A$). Consequently 
$$L^n \mathbb{E}\bigg[\Big(\sum_{x=1}^{LX} \sum_{y=1}^{LY_1} \xi(x, y)^2\Big)^n\bigg] \leq C L^n \sum_{\lambda \vdash n} (L^2 X Y_1)^{\ell(\lambda)} L^{-2n-\ell(\lambda)} \leq C.$$ 
Inserting the above upper bound into \eqref{eq:M1bound} implies 
\begin{equation}\label{eq:temp37}
\mathbf{M}_1 \leq C(n) |Y_2 - Y_1|^{2n}.
\end{equation} 
We proceed to upper bound $\mathbf{M}_2$. Again, using Burkholder-Davis-Gundy inequality, one obtains 
\begin{equation*}
\mathbf{M}_2 \leq C(n) L^n \mathbb{E}\bigg[\Big(\sum_{x = 1}^{LX} \sum_{y = LY_1 + 1}^{LY_2} \mathcal{R}^d(LX, LY_2, x, y)^2 \xi(x, y)^2\Big)^n\bigg].
\end{equation*}
Expanding the $n$-th power for the double summation and upper bounding the square of $\mathcal{R}^d$ by a constant,  
\begin{align*}
\mathbf{M}_2 &\leq C(n) L^n \sum_{\lambda\vdash n}\sum_{\substack{(x_i, y_i) \in [1, LX] \times (LY_1, LY_2]\\ i = 1, \dots , \ell(\lambda), (x_i, y_i)  \text{ are distinct}}} \mathbb{E}\Big[\prod_{i=1}^{\ell(\lambda)} \xi(x_i, y_i)^{2\lambda_i}\Big].
\end{align*}
Using Lemma \ref{lem:technical bound} and by similar argument for upper bounding $\mathbf{M}_1$, we have 
\begin{align}\label{eq:temp36}
\mathbf{M}_2 \leq C(n) L^n \sum_{\lambda \vdash n} (L^2 X (Y_2 - Y_1))^{\ell(\lambda)} L^{-2n-\ell(\lambda)} \leq  C(n) L^{\ell(\lambda) - n} (Y_2 -Y_1)^{\ell(\lambda)} \leq C(n) |Y_2 - Y_1|^{n}  
\end{align}
The last inequality in the above display is due to our assumption $Y_2 - Y_1 \geq L^{-1}$. 
\bigskip
\\
Referring to \eqref{eq:temp33}, we have 
$$\mathbb{E}\Big[\big(U_L(X_, Y_2) - U_L (X, Y_1)\big)^{2n}\Big] \leq 2^{2n-1} (\mathbf{M}_1 + \mathbf{M}_2).$$
Combining \eqref{eq:temp37} with \eqref{eq:temp36}, we conclude \eqref{eq:temp42}.
\end{proof}
\begin{remark}\label{rmk:tightness}
It is worth remarking that the classical theory for martingale functional CLT, e.g. \cite[Section 6]{Bro71}, might not be helpful for proving our tightness. In order to get the tightness, the classical theory requires $U_L (X, Y)$ to be a martingale in $(X, Y)$ in order to control (using martingale inequalities) the modulus $$\sup_{|X_1 - X_2| + |Y_1 - Y_2| \leq \delta} |U_L(X_1, Y_1) - U_L(X_2, Y_2)|,$$
	for small $\delta > 0$, and then apply the Arzela-Ascoli. See \cite[Theorem 7.3]{Bil13}. In our case, though $\xi(x, y)$ is a martingale increment,  $U_L(X, Y)$ fails to be a martingale due to dependence of $\mathcal{R}^d$ on $X, Y$ in \eqref{eq:temp39}. 
\end{remark}
\begin{proof}[Proof of Corollary \ref{cor:clt}]
It suffices to prove the weak convergence for arbitrary interval $[0, A ]\times [0, B]$. Note that $U(x, y) = q^{H(x, y)} - \mathbb{E}\big[q^{H(x, y)}\big]$, then
\begin{align}\label{eq:temp14}
H(Lx, Ly) = L \log_\mathfrak{q} \Big(q^{H(Lx, Ly)}\Big)  = L \log_\mathfrak{q} \mathbb{E}\Big[q^{H(Lx, Ly)}\Big] + L \log_\mathfrak{q} \bigg(1 + \frac{U(Lx, Ly)}{\mathbb{E}\big[q^{H(Lx, Ly)}\big]}\bigg).
\end{align}
Since $H(x, y)$ is Lipschitz and $q = \mathfrak{q}^{\frac{1}{L}}$ (where $\mathfrak{q}$ is fixed), there exists constant $C$ such that for $(x, y) \in [0, LA] \times [0, LB]$, 
\begin{equation*}
\frac{1}{C} \leq q^{H(Lx, Ly)} \leq C,  \qquad
\frac{1}{C} \leq \mathbb{E}\Big[q^{H(Lx, Ly)}\Big] \leq C.
\end{equation*} 
For the second term on the right hand side of \eqref{eq:temp14},  we taylor expand the function $\log_{\mathfrak{q}} (1+x)$ around $x = 0$,  
\begin{align*}
H(Lx, Ly) = L \log_{\mathfrak{q}} \mathbb{E}\Big[q^{H(Lx, Ly)}\Big] +  \frac{LU(Lx, Ly)}{\log \mathfrak{q} \cdot \mathbb{E}\big[q^{H(Lx, Ly)}\big]} + L r_L(x, y),
\end{align*} 
where $|r_L(x, y)| \leq C U(Lx, Ly)^2 / \big(\mathbb{E}[q^{H(Lx, Ly)}]\big)^2 \leq C U(Lx, Ly)^2$. Consequently, since $\mathbb{E}\big[U(Lx, Ly)\big] = 0$,
\begin{equation*}
\frac{H(Lx, Ly) - \mathbb{E}\Big[H(Lx, Ly)\Big]}{\sqrt{L}} = \frac{\sqrt{L} U(Lx, Ly)}{\mathbb{E}\Big[q^{H(Lx, Ly)}\Big] \log \mathfrak{q}} + \sqrt{L} \Big(r_L (x, y) - \mathbb{E}\big[r_L (x, y)\big]\Big).
\end{equation*}
By Proposition \ref{prop:tightness}, we know that  $U_L (\cdot, \cdot) = \sqrt{L} U(L\cdot, L \cdot)$ is tight. Thus, for any fixed $A, B > 0$, as $L \to \infty$, $$\sup_{x \in [0, A] \times [0,B]} L^{\frac{1}{2}} U (Lx, Ly)^2 \to 0\quad \text{ in probability. }$$
Since $|r_L (x, y)| \leq C U(Lx, Ly)^2$, 
$$\sup_{(x, y) \in [0, A] \times [0, B]}\sqrt{L}\Big|r_L (x, y) - \mathbb{E}\big[r_L (x, y)\big]\Big| \to 0 \quad \text{ in probability.}$$
Therefore, we have the weak convergence in $C([0, A] \times [0, B])$,
\begin{align*}
\lim_{L \to \infty} \frac{H(Lx, Ly) - \mathbb{E}\Big[H(Lx, Ly)\Big]}{\sqrt{L}} = \lim_{L \to \infty} \frac{\sqrt{L} U(Lx, Ly)}{\mathbb{E}\big[q^{H(Lx, Ly)}\big]\log \mathfrak{q}} = \frac{\varphi(x, y)}{\mathfrak{q}^{\mathbf{h}(x, y)} \log \mathfrak{q}}.
\end{align*}
To get the second equality above, we apply Theorem \ref{thm:lln} and Theorem \ref{thm:clt} to the denominator and numerator respectively. By straightforward computation,  $\phi(x, y) := \frac{\varphi(x, y)}{\mathfrak{q}^{\mathbf{h}(x, y)} \log \mathfrak{q}}$ solves \eqref{eq:temp11}, which concludes the corollary.
\end{proof}

\bibliographystyle{alpha}
\bibliography{higherspintotelegraph.bib}
\end{document}